\documentclass[oneside,10pt]{amsart}
\usepackage{calc}
\usepackage{graphicx} 
\usepackage{tikz}
\usepackage{amssymb}
\usepackage[colorlinks=true]{hyperref}
\usepackage{multirow}
\usepackage{siunitx}

\newcommand{\kk}[1]{\marginpar{\parbox{3.5cm}{\scriptsize \color{blue} \sf KK:\  #1}}}
\renewcommand{\sp}[1]{\marginpar{\parbox{3.5cm}{\scriptsize \color{blue} \sf SP:\  #1}}}

\newcommand{\arxivonly}[1]{\marginpar{\parbox{3.5cm}{\color{blue} \sc arXiv only}} {\color{blue}#1}} \newcommand{\antsonly}[1]{\marginpar{\parbox{3.5cm}{\color{purple} \sc ants only}} {\color{purple}#1}}
\renewcommand{\arxivonly}[1]{#1}\renewcommand{\antsonly}[1]{}

\theoremstyle{plain}
\numberwithin{equation}{section}
\newtheorem{theorem}[equation]{Theorem}
\newtheorem{lemma}[equation]{Lemma}
\newtheorem{algorithm}[equation]{Algorithm}
\newtheorem{proposition}[equation]{Proposition}
\newtheorem{corollary}[equation]{Corollary}

\theoremstyle{definition}
\newtheorem{definition}[equation]{Definition}

\newtheorem{remark}[equation]{Remark}
\newtheorem{example}[equation]{Example}
\newtheorem{condition}[equation]{Condition}
\counterwithin{figure}{section}
\counterwithin{table}{section}

\newcommand{\fixme}[1]{{\color{red}FIXME: #1}}
\newcommand{\checkme}[1]{{\color{blue}CHECKME: #1}}

\newcommand{\bdiv}{{\;\operatorname{div}\;}}

\newcommand{\disc}{{\operatorname{disc}}}
\newcommand{\rank}{{\operatorname{rank}}}
\newcommand{\Gal}{\operatorname{Gal}}
\newcommand{\Aut}{\operatorname{Aut}}
\newcommand{\Res}{\operatorname{Res}}
\newcommand{\Q}{\mathbb{Q}}
\newcommand{\Qp}{\Q_p}
\newcommand{\N}{\mathbb{N}}
\newcommand{\F}{\mathbb{F}}
\newcommand{\Fq}{\F_q}
\newcommand{\Fp}{\F_p}

\newcommand{\Z}{\mathbb{Z}}

\newcommand{\Rep}{\mathcal{R}}

\newcommand{\KK}{K}

\newcommand{\Ksep}{\KK^{\mathrm{sep}}}
\newcommand{\OK}{\mathcal{O}_\KK}
\newcommand{\RK}{\kappa}
\newcommand{\RepK}{\Rep_\KK}
\newcommand{\piK}{\pi_\KK}
\newcommand{\xiK}{\xi_\KK}
\newcommand{\xiRK}{\overline{\xi}_\KK}

\newcommand{\KL}{L}
\newcommand{\OL}{\mathcal{O}_\KL}
\newcommand{\RL}{\lambda} 
\newcommand{\piL}{\pi_\KL} 
\newcommand{\xiRL}{\overline{\xi}_\KL}
\newcommand{\xiL}{\xi_\KL} 

\newcommand{\Lur}{\KL_{\mathrm{ur}}}
\newcommand{\RLur}{\lambda}

\newcommand{\OLur}{\mathcal{O}_{\Lur}}
\newcommand{\RepLur}{\Rep_{\Lur}}
\newcommand{\RepLurs}{\Rep_{\Lur}^\times}

\newcommand{\rampol}{\mathcal{N}} 
\newcommand{\Apol}{{\overline{A}}} 
\newcommand{\Aadd}{\Apol^{+}} 
\newcommand{\Ainv}{\mathcal{A}} 
\newcommand{\Cinv}{\mathcal{C}} 
\newcommand{\cont}[2]{\operatorname{cont}_{#1}\!\left({#2}\right)}
\newcommand{\card}[1]{\# #1} 
\newcommand{\chr}{\chi} 

\newcommand{\Algo}[5]
                {
                \begin{algorithm}[\texttt{#1}] \label{#2}{$\;$}\rm\\[1ex]                   
\mbox{\enspace}\rlap{\rm Input: }\phantom{\rm Output: }
\parbox[t]{\textwidth-\widthof{{\rm Output: }}-\widthof{\enspace\enspace}}{#3}\\                
\mbox{\enspace}{\rm Output: }
\parbox[t]{\textwidth-\widthof{{\rm Output: }}-\widthof{\enspace\enspace}}{#4}\\[-2ex] \parskip0pt
\begin{list}{}{\setlength{\leftmargin}{0pt}}
\item                    #5
\end{list}
\parskip6pt
                \end{algorithm}
                \goodbreak}

\newcommand{\costlog}{\mathsf{L}}

\newcommand{\costchar}{\mathsf{C}}

\newcommand{\costroot}{\mathsf{Z}}
\newcommand{\costmult}[1]{\mathsf{M}\!\left(#1\right)}
\newcommand{\softoh}[1]{\widetilde{O}\!\left(#1\right)}
\newcommand{\softohalone}{\widetilde{O}}
\newcommand{\oh}[1]{{O}\!\left(#1\right)}
\newcounter{savedcounter}

\author[J. Guardia]{Jordi Guàrdia i Rúbies}
\address{Universitat Politècnica de Catalunya}
\email{jordi.guardia-rubies@upc.edu}

\author[J.W. Jones]{John W. Jones}
\address{Arizona State University}
\email{jj@asu.edu}

\author[K. Keating]{Kevin Keating}
\address{University of Florida}
\email{keating@ufl.edu}

\author[S. Pauli]{Sebastian Pauli}
\address{University of North Carolina Greensboro}
\email{s\_pauli@uncg.edu}

\author[D.P. Roberts]{David P.\ Roberts}
\address{University of Minnesota Morris}
\email{roberts@morris.umn.edu}

\author[D. Roe]{David Roe}
\address{Massachusetts Institute of Technology}
\email{roed@mit.edu}
\date{\today}

\title{Distinguished defining polynomials for extensions of $p$-adic fields}

\begin{document}

\begin{abstract}
    We give an algorithm for choosing a distinguished defining polynomial for a $p$-adic field extension.
    This algorithm formed an important ingredient in the recent expansion of the database of $p$-adic fields within the $L$-functions and modular forms database.
\end{abstract}


\maketitle

\section{Introduction}
\label{sec introduction}

It follows from Krasner's Lemma that a $p$-adic field has only finitely many extensions of a given degree.
It is thus natural to seek a list of polynomials $\varphi(x)$ such that each extension is generated by exactly one $\varphi(x)$.
Once the list has been created, looking up an extension defined by some other polynomial $\psi(x)$ requires
an algorithm which matches $\psi(x)$ to one of the chosen $\varphi(x)$ defining an isomorphic extension.
Such an algorithm, \texttt{polredpadic} (Algorithm \ref{alg polredpadic}), is the central result of this paper.
Note that this algorithm also enables the creation of the list of $\varphi(x)$ in the first place by taking the list with repetition from \cite[\S 2.4, 3.3]{families}, running \texttt{polredpadic} on all of them, and then removing duplicates.

The LMFDB \cite{lmfdb} is a massive online database of number-theoretic objects of many different types.
It contains a database of $p$-adic fields, initially created by Jones and Roberts \cite{jrdatabase} and recently extended by the present authors \cite{families}.
The \texttt{polredpadic} algorithm played the central role in enabling the enumeration of the $890{,}111$ degree $16$ and the $314{,}543$ degree $20$ extensions of $\Q_2$.
While the sheer number of fields prohibits practical enumeration in some degrees (there are over $114$ billion degree $32$ extensions of $\Q_2$), the process is now completely automated and straightforward to carry out for any particular degree.
Moreover, the same approach works to enumerate fields with more restricted invariants, such as a specific discriminant or family \cite[\S 1.2]{families}.

Our basic approach rests on work of Krasner \cite{krasner-cluj} as modernized by Monge \cite{Mo}.
In the case where $\KL/\Qp$ is totally ramified, this theory gives a collection of \emph{near-canonical} Eisenstein polynomials, each of which defines $\KL$.
We generalize this approach to the case that the inertia degree is larger than $1$, replacing Eisenstein polynomials with {\O}ystein polynomials (see Section \ref{sec oystein}), and give a process to pick a \emph{distinguished polynomial} among them (see Condition \ref{con main}).

The name \texttt{polredpadic} is inspired by the PARI-GP \cite{pari} function
{\tt polredabs} which computes a (pseudo) canonical
defining polynomial of a number field and enables searching the LMFDB's number field database given an arbitrary irreducible polynomial over $\Q$.

The paper is structured as follows.
In Section \ref{sec notation} we set up notation and describe how we select polynomials defining unramified extensions.
In Section \ref{sec oystein} we define $\nu$-{\O}ystein polynomials for an unramified polynomial $\nu(x)$,
give Algorithm \ref{alg eisenore} for finding an {\O}ystein polynomial generating an arbitrary extension,
and connect {\O}ystein polynomials to Eisenstein polynomials over the maximal unramified subextension.
In Section \ref{sec polygon} we recall the theory of ramification polygons
and generalize them from their original context of Eisenstein polynomials to {\O}ystein polynomials.
In Section \ref{sec residual polynomials} we recall the theory of residual polynomials,
which are attached to the segments of the ramification polygon
and provide an invariant allowing for the division of families into subfamilies (Definition \ref{def res poly class}).
We also define distinguished residual polynomials,
which provide one part of the recipe for selecting a distinguished polynomial for $\KL/\Qp$.

Section \ref{sec reduction} forms the core of the paper, describing how to reduce an {\O}ystein polynomial to a set of near-canonical options
and then how to pick a distinguished polynomial from among them.
We first give the process for Eisenstein polynomials from \cite{Mo},
with modifications to ensure compatibility with the generic polynomial of the family \cite[\S 3.3]{families}.
As part of this process, we recall additive residual polynomials
and generalize them to {\O}ystein polynomials.
The main theorem underpinning the reduction process is Theorem \ref{theo red},
which describes how the minimal polynomial of a root $\alpha$ of a $\nu$-{\O}ystein polynomial $\psi$ changes when making a substitution of the form $\alpha \mapsto \alpha + \gamma\nu(\alpha)^m$.
This theorem enables setting many $p$-adic digits of the coefficients of $\psi$ to $0$ without changing the extension (see Corollary \ref{coro easy}),
and also provides a reduction algorithm for the remaining coefficients (Algorithm \ref{alg polredslope}).
Section \ref{sec reduction} ends by defining a total order on {\O}ystein polynomials, allowing for the selection of a distinguished defining polynomial.
We close with a complexity analysis of our algorithms in Section \ref{sec complexity}\arxivonly{ and several worked examples in Section \ref{sec examples}}.

Our implementation in Magma \cite{magma} of the algorithms in this paper
is available on GitHub at \url{https://github.com/roed314/padic_db} and data is available on the LMFDB at \url{https://www.lmfdb.org/padicField/}. 

We thank the American Institute of Mathematics for supporting this work through an AIM SQuaRE
and the three anonymous referees whose comments have improved the quality of the paper.

\section{Setup and notation}
\label{sec notation}

Let $\KK$ be a finite extension of $\Qp$ with inertia degree $f_\KK$, ramification index
$e_\KK$, and degree $n_\KK = f_\KK e_\KK$.  Let $\piK$ be a uniformizer for $\KK$ and let
$v_\KK:\KK\rightarrow\Z\cup\{\infty\}$ be the valuation on \(\KK\) 
normalized so that $v_\KK(\piK)=1$.
Let $\OK=\{\alpha\in \KK:v_\KK(\alpha)\ge 0\}$ be the
ring of integers of $\KK$
and let $(\piK)=\{\alpha\in \KK:v_\KK(\alpha)> 0\}$ be the maximal
ideal of $\OK$.
For \(\alpha,\beta\in\KK\), we write \(\alpha\sim\beta\) when \(v_\KK(\alpha-\beta)>v_\KK(\alpha)\).

We denote the residue class field of $\KK$ by $\RK=\OK/(\piK)$ and by \(q=\card{\RK} \) its cardinality.
For a generator
\(\xiRK\) of the cyclic group $\RK^\times$
and $\overline{\gamma}\in\RK^\times$ we
let \(\log_{\xiRK}\overline\gamma\) be the smallest \(n\in\N_0=\N\cup\{0\}\)
with \(\xiRK^n=\overline{\gamma}\).

Let \(\RepK\) be a fixed set of representatives of \(\RK\) in \(\OK\)
with \(\{0,1\}\subseteq \RepK\). 

Let \(\varphi\in\OK[x]\) have degree \(n\).  We write
\(\varphi(x)=\sum_{i=0}^n\varphi_i x^i\), and for each
coefficient we write \(\varphi_i=\sum_{i=0}^\infty \varphi_{i,k}\piK^i\), where \(\varphi_{i,k}\in\RepK\).

Let $\Ksep$ be a separable closure of $\KK$ and let $\KL/\KK$ be a finite
subextension of $\Ksep/\KK$, with inertia degree
$f$, ramification index $e=\epsilon p^w$ with
$p\nmid \epsilon$, and degree $n=ef=\epsilon p^wf$. 
We denote by $\Lur/K$ the maximal unramified subextension
of $\KL/\KK$, by $\KL_0/\KK$ its maximal
tame subextension and by \(\RL\) the residue class field of $\KL$.  
Then \([\Lur:\KK]=[\RL:\RK]=f\), $[\KL_0:\Lur]=\epsilon$, and we get the standard tower of subextensions
\begin{equation}
\label{can}
\KK \stackrel{f}{\subseteq} \Lur  \stackrel{\epsilon}{\subseteq} \KL_0  \stackrel{p^w}{\subseteq} \KL.
\end{equation}
Let $\psi$ be a defining polynomial for $\KL/\KK$ and let $\beta$ be a root of $\psi$ such that
$\KL=\KK(\beta)$.  For \(\gamma\in\KK[x]\) we
denote the characteristic polynomial of $\gamma(\beta)$ by 
\(\chr_{\KL/\KK}(\gamma(\beta))=\chr_\psi(\gamma)=\Res_t\left(\psi(t),x-\gamma(t)\right)\).

For \(\rho(x)=\sum_{i=0}^n\rho_i x^i\in\OL[x]\) we call the lower convex hull \(\rampol\) 
of  \[\left\{\left(-i,v_\KL(\rho_i)\right):0\le i\le n\right\}\] 
the \emph{Newton polygon} of \(\rho\).  Its slopes yield the multiset of
valuations of the roots of \(\rho\).  We denote the \(\piL\)-content of \(\rho\) by
\[\cont{\piL}{\rho}=\piL^{\min_{0\le i\le n}v_\KL(\rho_i)}.\]

\subsection*{Unramified extensions}

The standard choice for defining these extensions are Conway polynomials, which were introduced by Parker
\cite[Section 2.8.3]{holt-eick-obrien} and are the default in Magma \cite{magma}, SageMath \cite{sage} and other systems.
Unfortunately Conway polynomials have not been computed for all
primes and extension degrees considered in the \(p\)-adic field database of the LMFDB.
See \cite[Section 7]{luebeck} for the Conway polynomials known in 2023.

For the database, when no Conway polynomial is known we follow the 
procedure from \cite{jrdatabase} which is in the same spirit, but with fewer
restrictions.  
We pick the polynomial whose roots over \(\Fp\) are
primitive of multiplicative order \(p^{f}-1\)
which is smallest according to the lexicographic ordering used in the definition of Conway polynomials:
for \(\nu(x) = x^n + \sum_{i=0}^{n-1} (-1)^{n-i} \nu_i x^i\in\Z[x]\) and
\(\mu(x) = x^n + \sum_{i=0}^{n-1} (-1)^{n-i} \mu_i x^i\in\Z[x]\) with \(0\le \nu_i,\mu_i<p\)
we define \(\overline\nu < \overline\mu\) when there exists $k$ with \(\nu_i = \mu_i\) for all \(i > k\)  
and \(\nu_k < \mu_k\).  The code allows users to make a different choice if desired.

\subsection*{LMFDB}
Following the choice made in Magma and Sage,
in the LMFDB we define 
unramified extensions using polynomials with coefficients in \(\{0,1,\dots,p-1\}\) that
are congruent to the coefficients of the polynomials described above.
By construction a root \(\overline{\xi}\) over \(\F_p\) of such a polynomial \(\overline\nu\)
is a primitive element and is used as the base of discrete logarithms.
For a \(p\)-adic field \(\KK\) with residue field \(\F_{p^{f_\KK}}\) we choose the set of
representatives 
\[
\RepK=\{c_{f_\KK-1}\xi^{f_\KK-1}+\dots+c_1\xi+{c_0} :c_i\in\{0,1,\dots,p-1\}\}.
\]
In particular, if \(f_\KK=1\) we get \(\RepK=\{0,1,\dots,p-1\}\).

\section{{\O}ystein polynomials}\label{sec oystein}
Every totally ramified extension \(\KL/\KK\) of degree \(e\) 
is generated by a root of an Eisenstein polynomial \(\varphi(x)=x^{e}+\sum_{i=0}^{e-1}\varphi_i x^i \in\OK[x]\), where \(v_\KK(\varphi_i)\ge 1\) for \(1\le i \le e-1\) and \(v_\KK(\varphi_0)=1\).   
In the following we consider a generalization of Eisenstein polynomials that
can be used to define any finite extension of a local field.

\begin{definition}
Let $\nu(x)\in\OK[x]$ be monic and irreducible modulo $\piK$.
A monic polynomial $\varphi\in\OK[x]$
is \(\nu\)-{\O}ystein when
\begin{equation}\label{eq phi exp}
\varphi(x)=\nu(x)^e+\sum_{i=1}^{e-1}\varphi^*_i(x)\nu(x)^i+\varphi^*_0(x)
\end{equation}
for some \(e\in\N\), where 
\(\piK\mid \varphi^*_i(x)\) and \(\deg\varphi^*_i<\deg\nu\)
for \(0\le i\le e-1\),
and $\cont{\piK}{\varphi_0^*}=\piK$. 
We call (\ref{eq phi exp}) the \(\nu\)-expansion of \(\varphi\).  
\end{definition} %

We chose the name {\O}ystein to honor {\O}ystein Ore and to avoid confusion 
with Ore polynomials which are elements of Ore extensions in ring theory.
{\O}ystein polynomials are called \emph{regular} polynomials in \cite{ore-newton}
and polynomials in \emph{Eisenstein form} in \cite{ford-pauli-roblot}.

Let \(\varphi(x)=\nu(x)^e+\sum_{i=0}^{e-1}\varphi^*_i(x)\nu(x)^i\in\OK[x]\) be
a $\nu$-{\O}ystein polynomial with \(\overline\nu(x)\ne x\) and set \(f=\deg\nu\).
Let \(\alpha\in\Ksep\) be a root of \(\varphi(x)\) and set \(\KL=\KK(\alpha)\).  Because \(\overline\nu\) is irreducible we have $f_{\KL/\KK}\ge f$
and because \(\overline\nu(\overline\alpha)=0\) we have \(v_\KL(\alpha)=0\). 
The polynomial $\varphi^*(y)=y^e+\sum_{i=0}^{e-1}\varphi^*_i(\alpha)y^i\in\OL[y]$
has the root \(\nu(\alpha)\).  Since \(v_\KK(\varphi_0^*(\alpha))=1\)
and \(v_\KK(\varphi_i^*(\alpha))\ge 1\) the lower convex hull of
\[
\left\{(-e,0),(-e+1,v_\KK(\varphi_{e-1}^*(\alpha))),\dots, (0,v_\KK(\varphi^*_0(\alpha)))\right\}
\]
consists of a single segment of slope \(\frac{1}{e}\).
Thus, for the root \(\nu(\alpha)\) of \(\varphi^*\) we have \(v_\KK(\nu(\alpha))=\frac{1}{e}\) which yields a lower bound for the ramification index of \(\KL/\KK\), namely $e_{\KL/\KK}\ge e$.
Now, since \(e_{\KL/\KK}f_{\KL/\KK}\le ef =\deg\varphi\) we have 
\(e_{\KL/\KK}=e\) and $f_{\KL/\KK}=f$. 
Furthermore, \(\nu(\alpha)\) is a uniformizer of \(\KL\), $\OL=\OK[\alpha]$,
and \(v_\KK(\disc(\KL/\KK))=v_\KK(\disc(\varphi))\).
Over the unramified subextension \(\Lur=\KK[x]/(\nu)\) of \(\KL/\KK\) the polynomial \(\varphi\) factors into \(f=\deg\nu\) 
polynomials of degree \(e\).

Every finite extension of a local field can be generated by an {\O}ystein polynomial:

\begin{lemma}\label{lem make eisenore}
Let \(\KL/\KK\) be finite with inertia degree $f$ and ramification index $e$ and let \(\piL\) a uniformizer of \(\KL\).
Let $\nu\in\OK[x]$ be monic of degree $f$ such that $\overline\nu\in\RK[x]$ is irreducible and let $\alpha\in\OL$ be a root of $\nu(x)-\piL$. 
Then the minimal polynomial 
of $\alpha$ over $K$ is a \(\nu\)-{\O}ystein polynomial.
\begin{proof}
We have $\nu(\alpha)=\piL$, and
$\RK(\overline\alpha)$ is the residue class field of \(\KL/\KK\)
by construction.
Hence the result follows from \cite[Proposition 4.5]{ford-pauli-roblot}.
\end{proof}
\end{lemma}

%
%

\subsection*{The constant coefficient}


\begin{lemma}\label{lem const coeff}
Let \(\KL/\Lur/\KK\) as in \((\ref{can})\) and let \(\alpha\in\KL\) be a root of \(\nu(x)-\piL\).
Write \(\psi(x)=x^e+\sum_{i=0}^{e-1}\psi_i x^i\in\Lur[x]\) for the minimal polynomial of \(\piL\) over $\Lur$ and
\(\varphi(x)=\nu(x)^e+\sum_{i=0}^{e-1}\varphi_i^*(x) \nu(x)^i\) for  the minimal polynomial of \(\alpha\) over $\KK$.
Then \(\varphi_0^*(\alpha)\sim \psi_0\).
\end{lemma}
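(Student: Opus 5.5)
The plan is to compare the two relations that $\piL$ satisfies and to show that $\varphi_0^*(\alpha)$ and $\psi_0$ are both $\sim -\piL^e$. No norm or resultant computation should be needed. Throughout I work with $v_\KL$, so $v_\KL(\piK)=e$, and I use that $\alpha\in\OL$ by construction.

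First I record the two relations. Since $\nu(\alpha)=\piL$ and $\varphi(\alpha)=0$, the $\nu$-expansion gives
\[
\varphi_0^*(\alpha) = -\piL^e-\sum_{i=1}^{e-1}\varphi_i^*(\alpha)\,\piL^i .
\]
On the other side, $\KL/\Lur$ is totally ramified of degree $e$ and $\piL$ is a uniformizer of $\KL$, so its minimal polynomial $\psi$ over $\Lur$ is Eisenstein. Evaluating $\psi(\piL)=0$ gives
\[
\psi_0=-\piL^e-\sum_{i=1}^{e-1}\psi_i\piL^i .
\]

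Next I bound the tail terms in both expressions. Each $\varphi_i^*$ has coefficients in $\OK$ and is divisible by $\piK$, and $\alpha\in\OL$. Hence $v_\KL(\varphi_i^*(\alpha))\ge v_\KL(\piK)=e$, and so $v_\KL(\varphi_i^*(\alpha)\piL^i)\ge e+i>e$ for $1\le i\le e-1$. Likewise, since $\psi$ is Eisenstein, $v_\KL(\psi_i)\ge e$ for $1\le i\le e-1$, so $v_\KL(\psi_i\piL^i)>e$. Since $v_\KL(-\piL^e)=e$, both sums are strictly smaller than their leading term, which gives $\varphi_0^*(\alpha)\sim-\piL^e$ and $\psi_0\sim-\piL^e$.

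Finally, $\sim$ is transitive and symmetric among elements of equal valuation, so $\varphi_0^*(\alpha)\sim\psi_0$. The only step needing care is the assertion that $\psi$ is Eisenstein, i.e.\ $v_\KL(\psi_i)\ge e$. This holds because $\KL/\Lur$ is totally ramified of degree $e$: the conjugates of $\piL$ all have $v_\KL$-valuation $1$, so every elementary symmetric function of them lies in the maximal ideal of $\Lur$, and $v_\KL(\psi_0)=e$. Nothing beyond this seems to be an obstacle.
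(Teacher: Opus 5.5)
Your proposal is correct and follows the paper's own proof: show $\varphi_0^*(\alpha)\sim-\nu(\alpha)^e=-\piL^e$ and $\psi_0\sim-\piL^e$ by bounding the tail terms, then use transitivity of $\sim$. The one fact you use without justification, that every $\varphi_i^*$ is divisible by $\piK$, is the $\nu$-{\O}ystein property of $\varphi$; the paper gets it from Lemma \ref{lem make eisenore}, and you should cite that lemma at that step.
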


\begin{proof}
By Lemma \ref{lem make eisenore} \(\varphi\) is a \(\nu\)-{\O}ystein polynomial.
Since \(\nu(\alpha)=\piL\) and \(\psi(\piL)=0\) we have
\(-\psi_0=\nu(\alpha)^e+\sum_{i=1}^{e-1}\psi_i \nu(\alpha)^i\sim \nu(\alpha)^e\).
Also
\(-\varphi_0^*(\alpha)=\nu(\alpha)^e+\sum_{i=1}^{e-1}\varphi_i^*(\alpha) \nu(\alpha)^i\sim  \nu(\alpha)^e\).
\end{proof}
\arxivonly{
Example \ref{ex false friends} illustrates that this match only works for the first digit of the constant coefficient, but not for the other coefficients.}


\subsection*{The OM-algorithm and {\O}ystein polynomials}

The OM-algorithm\footnote{Other similar algorithms, such as Round 4 \cite{ford-pauli-roblot}, can also be used.} \cite{mng-newton} determines the essential $p$-local invariants of a square-free integral polynomial, facilitating both a number of important computations for global fields \cite{guardia-montes-nart} and the $p$-adic factorization of the polynomial \cite{guardia-nart-pauli}. The `O' in OM-algorithm stands for Okutsu and Ore, and the `M' for Mac Lane and Montes.

It is well known that {\O}ystein polynomials are irreducible. In addition, they have the property that their Okutsu invariants are found after only one step of the OM-algorithm. 
\arxivonly{The other polynomials for which this is the case are those of the form
\(\psi(x)=\nu(x)^e+\sum_{i=0}^{e-1}\psi^*_i(x)\nu(x)^i\) which have a root \(\alpha\) such that
\(v_\KK(\nu(\alpha))=\frac{t}{e}\), \(\gcd(t,e)=1\), and \(t\ne1\).}
Ore \cite{ore-newton} asserts that every finite extension of $\Qp$ admits a \emph{regular} 
defining polynomial, which we call an {\O}ystein polynomial. 
Unfortunately, Ore's result is only theoretical, and does not provide a practical method to find a regular defining polynomial for a given extension.  Lemma \ref{lem make eisenore} gives this construction.  

With the OM-Algorithm and Lemma \ref{lem make eisenore} we find a defining {\O}ystein polynomial \(\varphi\in\OK[x]\) of a finite
extension \(\KL/\KK\).
We assume that for each \(f\in\N\) we can look up a monic polynomial \(\nu(x)\in\OK[x]\) of degree \(f\) 
such that \(\overline\nu\) is irreducible over \(\RK\).


\Algo{{\O}ystein}{alg eisenore}
{\(\psi\in\OK[x]\) irreducible}
{\(\nu\in\OK[x]\) monic irreducible over \(\RK\) and\\ 
\(\varphi\in\OK[x]\) \(\nu\)-{\O}ystein such that \(\KK[x]/(\varphi)\cong\KK[x]/(\psi)\)}
{
Denote by \(\beta\) a root of \(\psi\) and write \(\KL=\KK(\beta)\).\\[-1ex]
\begin{enumerate}
\item with the OM-algorithm find:
\begin{itemize}
\item the ramification index \(e\) of \(\KL/\KK\) and \(\Pi\in\KK[x]\) with \(v_\KK(\Pi(\beta))=1/e\)
\item the inertia degree \(f\) of \(\KL/\KK\) 
\end{itemize}
\item\label{alg eisenore conway} let \(\nu\in\OK[x]\) be monic of degree \(f\) with 
\(\overline{\nu}\in\RK[z]\) irreducible and let \(\overline\gamma\in\RL\) 
be a root of \(\nu\)
\item\label{alg eisenore gamma} use the output of the OM-algorithm to find \(\Gamma\in\KK[x]\) with \(\overline{\Gamma(\beta)}=\overline\gamma\) 
\item\label{alg eisenore newton} 
\(\alpha\gets\Gamma(\beta)-\frac{\nu(\Gamma(\beta))-\Pi(\beta)}{\nu'(\Gamma(\beta))}\)
\item return  \(\nu\) and the characteristic polynomial 
\(\varphi=\chr_{\KK(\beta)/\KK}(\alpha)\) of \(\alpha\) 
\end{enumerate}
}

After step (\ref{alg eisenore gamma}) we have \(v_\KK\left(\nu(\Gamma(\beta))-\Pi(\beta)\right) \ge \frac{1}{e}\).
One iteration of Newton lifting in step (\ref{alg eisenore newton}) is sufficient to obtain \(\alpha\in\KK(\beta)\) with 
\(v_\KK\left(\nu(\alpha)-\Pi(\beta)\right) > \frac{1}{e}\).  Hence \(v_{\KL}(\nu(\alpha))=1\), so \(\chr_\psi(\alpha)\) is 
{\O}ystein.

In the LMFDB we choose the polynomials in step (\ref{alg eisenore conway}) to be Conway polynomials or the polynomials from
\cite{jrdatabase} described in Section \ref{sec notation}.

Given a $\nu$-{\O}ystein polynomial \(\varphi(x)=\nu(x)^e + \sum_{i=0}^{e-1}\varphi^*_i(x)\nu(x)^i\in\OK[x]\) 
we can find an Eisenstein polynomial \(\psi\) over \(\Lur=K[x]/(\nu)\) such that \(\Lur[x]/(\psi)= \KK[x]/(\varphi)\) as follows.
Because the polynomial \(\nu(x)\) splits into linear factors over \(\RLur\), 
say \(\overline\nu(x)=(z-\overline\xi_1)\cdots(z-\overline\xi_f)\), 
we have 
\[\varphi(x)\equiv \nu(x)^e  \equiv (x-\xi_1)^e\cdots(x-\xi_f)^e \pmod\piK\] 
where $\xi_i\in\Lur$ is a lifting of $\overline\xi_i$.
So Hensel lifting yields \(f=\deg\nu\) coprime factors of \(\varphi\) of degree \(e\) over \(\Lur\).
Let \(\psi^*\in\OLur[x]\) be the factor of \(\varphi\) that corresponds to the factor \((x-\xi_1)^e\) of \(\nu^e\). 
Then \(\psi(x)=\psi^*(x+\xi_1)\) is Eisenstein and \(\Lur[x]/(\psi) = \KK[x]/(\varphi)\).

\section{Ramification polygons}\label{sec polygon}
Ramification polygons of Eisenstein polynomials were introduced by Krasner \cite{krasner-cluj}, and can be found with slightly varying definitions in later work \cite{scherk,greve-pauli,Mo,pauli-sinclair}. 
They are a formulation of the Herbrand invariant of an extension of
local fields; see \cite[Section 2]{families} for various perspectives on these.  
Here we generalize the notion of ramification polygons to {\O}ystein polynomials.

\begin{definition}
Let \(\varphi(x)=\nu(x)^e+\sum_{i=0}^{e-1}\varphi^*_i(x) \nu(x)^i \in\OK[x]\) be \(\nu\)-{\O}ystein and let \(\alpha\in\Ksep\) be one of its roots.  We say that 
\[
\rho(x)=\frac{\varphi\left(\nu(\alpha)x +\alpha\right)}{\nu(\alpha)^e}
\] 
is the \emph{$\nu$-ramification polynomial} of \(\varphi\).  We say that
the Newton polygon \(\rampol(\varphi)\) of \(\rho\) is the 
\emph{ramification polygon} of \(\varphi\); note that \(\rampol(\varphi)\) does not depend on $\nu$.
\end{definition}
If we write \(
\rho(x)=\sum_{i=0}^{n}\rho_i x^i
\)  we have 
\begin{equation}\label{eq rho coeff}
\rho_i=\nu(\alpha)^{i-e} \sum_{j=i}^n\binom{j}{i}\varphi_j \alpha^{j-i}.
\end{equation}
Denote the roots of \(\varphi\) by \(\alpha=\alpha_1,\dots,\alpha_n\in\overline{K}\).
Then the roots of $\rho$ are $\frac{\alpha_i-\alpha}{\nu(\alpha)}$ for $1\le i\le n$
and thus the slopes of the ramification polygon \(\rampol(\varphi)\) 
determine the valuations of the differences of the roots of $\varphi$.
%
%
%



\begin{lemma} \label{lem shape}
Let \(\varphi\in\OK[x]\) be a \(\nu\)-{\O}ystein polynomial of degree \(n=fe=f\epsilon p^w\)
and set \(\KL=\KK[x]/(\varphi)\).
Then the ramification polygon \(\rampol(\varphi)\) of \(\varphi\) 
is obtained from the ramification polygon \(\rampol_{\KL/\Lur}\) of
the totally ramified extension \(\KL/\Lur\) by adding a segment with endpoints
\((-n,n-e)\), \((-e,0)\).
\end{lemma}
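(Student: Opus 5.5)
The plan is to read the ramification polygon off the valuations of the roots of $\rho$. The roots of $\rho$ are $(\alpha_i-\alpha)/\nu(\alpha)$, where $\alpha_1,\dots,\alpha_n$ are the roots of $\varphi$. We normalize valuations by $v_\KL$, so $v_\KL(\nu(\alpha))=1$, and sort the roots into two groups: the $e$ roots that are conjugate to $\alpha$ over $\Lur$, and the remaining $n-e$ roots.

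First, the $n-e$ roots not conjugate to $\alpha$ over $\Lur$. By the discussion after Algorithm~\ref{alg eisenore}, $\varphi$ factors over $\Lur$ into $f$ coprime factors of degree $e$, and each is congruent modulo $\piK$ to $(x-\xi_j)^e$ with the $\overline\xi_j$ distinct. The roots of a factor $j\neq 1$ reduce to $\overline\xi_j\neq\overline\xi_1=\overline\alpha$, so $v_\KL(\alpha_i-\alpha)=0$ for these $n-e$ roots. The corresponding roots of $\rho$ therefore have valuation $-1$.

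Second, the $e$ roots in the factor $\psi^*$ of $\alpha$. Take the Eisenstein polynomial $\psi(x)=\psi^*(x+\xi_1)$ over $\Lur$, or, equivalently, use Lemma~\ref{lem make eisenore} with $\piL=\nu(\alpha)$. Its ramification polygon $\rampol_{\KL/\Lur}$ is the Newton polygon of $\psi(\pi x+\pi)/\pi^e$, where $\pi=\alpha-\xi_1$ is a uniformizer. Its roots $(\alpha_i-\alpha)/(\alpha-\xi_1)$ differ from $(\alpha_i-\alpha)/\nu(\alpha)$ by the unit $(\alpha-\xi_1)/\nu(\alpha)$; this is a unit because $\nu(\alpha)=(\alpha-\xi_1)u$ with $u$ a unit, since $\overline\nu$ is separable and so $\overline{\nu'}(\overline\xi_1)\ne0$. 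Hence this group of roots of $\rho$ has exactly the same multiset of valuations as the roots of the ramification polynomial of $\psi$, namely $\infty$ (from $\alpha$ itself) together with the positive slopes of $\rampol_{\KL/\Lur}$.

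Finally, I assemble the polygon. Because the Newton polygon of a polynomial is determined by the multiset of root valuations together with the valuation of its leading coefficient, it suffices to check normalizations. The leading coefficient of $\rho$ is $\nu(\alpha)^{n-e}$, of valuation $n-e$ at $x^n$, which gives the point $(-n,n-e)$. The segment of slope $-1$ has horizontal length $n-e$ and so ends at $(-e,0)$. Beyond that point the polygon coincides with $\rampol_{\KL/\Lur}$, which starts at $(-e,0)$ because the Eisenstein ramification polynomial is monic of degree $e$; it then has the same slopes. With the sign convention $(-i,v(\rho_i))$, the root valuations become the slopes, and the slope $-1$ is the smallest, so the segment sits on the far left as claimed.

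The main obstacle is the identification in the second step: showing that the root valuations match those of $\rampol_{\KL/\Lur}$ exactly, which rests on the unit-ratio argument between $\nu(\alpha)$ and $\alpha-\xi_1$. A further technical point is that $\rho$ has coefficients in $\OL$ rather than $\OK$, so I must argue with valuations of coefficients of a polynomial over $\KL$, for which the root–slope correspondence still holds.
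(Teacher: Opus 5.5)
Your proof is correct and follows the paper's argument. You split the roots of $\varphi$ into the $n-e$ with $\widetilde\alpha\not\sim\alpha$ (valuation $-1$ in $\rho$) and the $e$ conjugates of $\alpha$ over $\Lur$, which you match with the roots of the ramification polynomial of $\KL/\Lur$ via separability of $\overline\nu$. The only difference is that the paper uses the uniformizer $\nu(\alpha)$ and compares $\nu(\widetilde\alpha)-\nu(\alpha)$ with $\widetilde\alpha-\alpha$, whereas you use $\alpha-\xi_1$ and compare denominators. For that comparison, take $\xi_1$ to be the actual root of $\nu$ in $\Lur$ lifting $\overline\xi_1$, which exists by Hensel; an arbitrary lift makes the identity $\nu(\alpha)=(\alpha-\xi_1)u$ false, although the unit ratio still holds.
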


\begin{proof}
There are \(n-e\) roots \(\widetilde\alpha\) of \(\varphi(x)\) such that \(\widetilde\alpha\not\sim\alpha\); these
all give roots of \(\rho(x)\) with \(L\)-valuation \(-1\).  Let \(\psi(x)\) be the minimal
polynomial over \(\Lur\) of the uniformizer \(\piL=\nu(\alpha)\) of \(L\).
Then the roots of \(\psi\) have the form \(\widetilde\pi_L=\nu(\widetilde\alpha)\),
where \(\widetilde\alpha\) is a root of \(\varphi\) such that
\(\widetilde\alpha\sim\alpha\).  Since \(\nu(x)\in\OK[x]\) and 
\(\overline\nu(x)\) is separable, we have \(v_\KL(\widetilde\pi_L-\piL)=v_L(\widetilde\alpha-\alpha)\).  
Hence the multisets
\(\{v_\KL(\frac{\widetilde\alpha-\alpha}{\nu(\alpha)}):\varphi(\widetilde\alpha)=0,\;\widetilde\alpha\sim\alpha\}\) 
and \(\{v_\KL(\frac{\widetilde\pi_L-\piL}{\piL}):\psi(\widetilde\pi_L)=0\}\) are equal.  
The lemma now follows from the relation
between the slopes of the Newton polygon of \(\rho(x)\) and the valuations of its roots. 
\end{proof}

It follows from the lemma that \(\rampol(\varphi)\) is determined by the extension \(\KL/\KK\), so
we may denote it by \(\rampol_{\KL/\KK}\).  Using the description of the ramification
polygon for the totally ramified case (see \cite[Lemma 1]{scherk} or \cite[Lemma 3.2]{pauli-sinclair})
we obtain the ramification polygon of a general extension \(\KL/\KK\) of degree \(fe=f\epsilon p^w\) 
\begin{align}\label{eq ram pol}
\rampol_{\KL/\KK}=\Bigl\{\bigl(-ef,(f-1)e\bigr),(-e,0),&
\bigl(-p^w,0\bigr),
\bigl(-p^{w_{1}},v_\KL(\rho_{w_{1}})\bigr),
\dots\nonumber\\[-1ex]
&\dots,\bigl(-p^{w_{u-1}},v_\KL(\rho_{w_{u-1}})\bigr),
\bigl(-1,v_\KL(\rho_1)\bigr)\Bigr\}
\end{align}
for some \(w=w_0>w_{1}>\dots>w_{u-1}>w_u=0\).
\arxivonly{
See Figures \ref{fig ram pol} and \ref{fig ram pol examples}.
}
\begin{figure}
\begin{tikzpicture}[xscale=0.55,yscale=0.3]
\draw (1,0) -- (-1.5,0);
\draw[dashed] (-1.5,0) -- (-4,0);
\draw[->] (-4,0) -- (-16,0) node[anchor=east] {$-i$};

\draw (1,0) -- (1,2);
\draw[dashed] (1,2) -- (1,4.5);
\draw[->] (1,4.5) -- (1,13) node[anchor=south] {$v_\KL(\rho_i)$};

\draw[very thick,blue] (0,12) -- (-1.5,5);
\draw[very thick,blue,dotted] (-1.5,5) -- (-4,2);
\draw[very thick,blue] (-4,2) -- (-7,0) -- (-11,0);
\draw[very thick,blue] (-15,4) -- (-11,0) ;

\filldraw[blue] (0,12) circle (3pt)
                 (-1.5,5) circle (3pt)
                 (-4,2) circle (3pt)
                 (-7,0) circle (3pt)
                 (-11,0) circle (3pt)
                 (-15,4) circle (3pt);

\foreach \x/\xtext in {0/1\!\!\!, 1.5/p^{w_{u\!-\!1}}, 4/p^{w_{1}}, 7/p^{w},  11/e=-\epsilon p^w, 15/f e}
  \draw[thick] (-\x,0) -- (-\x,-0.2) node[anchor=north] {$-\xtext$};

\draw[thick] (1,0) -- (1.142,-0.142);
\draw (1.5,-0.4) node {$0$};

\foreach \x/\xtext in { 7/{\!\!\!\begin{array}{l}v_\KL(\rho_{f e})\\=(f-1)e\end{array}},2/v_\KL(\rho_{p^{w_{1}}}), 5/v_\KL(\rho_{p^{w_{u-1}}}), 12/v_\KL(\rho_1)}
  \draw[thick] (1,\x) -- (1.2,\x) node[anchor=west] {$\xtext$};

\draw (-12,10.5) node {$\rho(x)=\displaystyle\sum_{i=0}^{e} \rho_i x^i = \displaystyle\frac{\varphi(\nu(\alpha) x+\alpha)}{\nu^{e}(\alpha)}$};

\draw (-1,9)  node[anchor=east] {$m_{u}=\frac{t_{u}}{d_{u}}$}
      (-6.5,1)   node[anchor=south] {$m_1=\frac{t_1}{d_1}$}
      (-9,0)   node[anchor=south] {$m_0=0$}
      (-15,1)   node[anchor=south] {$m_{-1}\!=\!-1$};
\end{tikzpicture}
\caption{General form of the ramification polygon of a \(\nu\)-{\O}ystein polynomial 
$\varphi=\nu(x)^e+\sum_{i=0}^{e-1}\varphi^*_i(x)\nu(x)^i\in\OK[x]$ of degree \(n=f\cdot e=f\cdot \epsilon p^w\).  The segment of slope \(-1\) only exists when \(f>1\) and a segment of slope 0 indicates 
$\epsilon>1$.
}\label{fig ram pol}
\end{figure}


We have
\(
v_\KK(\disc(\KL/\KK))=v_\KK(\disc(\varphi))=f\cdot (e+v_\KL(\rho_1)-1)
\),
and each of the segments of \(\rampol\) corresponds to a subextension of \(\KL/\KK\) 
(see \cite[Algorithmus 4.4]{greve} or \cite[Section 3.4]{milstead-phd} for an algorithm).




\arxivonly{
\begin{figure}
\begin{tikzpicture}[scale=.26]
\draw (-10,14) node {$f=1$, $e=18$};

\draw[->] (0,0) -- (0,16);
\draw[->] (0,0) -- (-19,0);

\draw[very thick,green] (-18,0) -- (-9,0) -- node[anchor=east] {$\frac{13}{8}$} (-1,13);
                 
\filldraw[red] (-3,3) circle (5pt);
\draw[very thick, red,dotted] (-18,0) -- (-9,0) -- node[anchor=west] {$\frac{1}{2}$} (-3,3) -- node[anchor=west] {$5$} (-1,13);

\filldraw[blue] (-3,6) circle (5pt);
\draw[very thick,blue,dashed] (-18,0) -- node[anchor=south] {\color{black}$0$}  (-9,0) -- node[anchor=west] {$1$} (-3,6) -- node[anchor=east] {$\frac{7}{2}$}(-1,13);

\filldraw[black] (-18,0) circle (5pt) (-9,0) circle (5pt) (-1,13) circle (5pt);

\foreach \x in {-18,-9,-3,-1}  
\draw[thick] (\x,0) -- (\x,-0.2) node[anchor=north] {$\x$};

\draw[thick] (0,0) -- (0.142,-0.142);
\draw (0.7,-0.6) node {$0$};

\foreach \x in {13,6,3} 
  \draw[thick] (0,\x) -- (0.2,\x) node[anchor=west] {$\x$};

\end{tikzpicture}
\begin{tikzpicture}[scale=0.26]
\draw (-10,14) node {$f=2$, $e=9$};

\draw[->] (0,0) -- (0,16);
\draw[->] (0,0) -- (-19,0);

\draw[very thick,green] (-18,9) --  (-9,0) -- node[anchor=south] {$\frac{7}{8}$}(-1,7);

\filldraw[blue] (-3,3) circle (5pt) ;
\draw[very thick,blue,dashed] (-18,9) -- node[anchor=west,black] {$-1$}  (-9,0) -- node[anchor=west] {$\frac{1}{2}$}(-3,3) -- node[anchor=west] {$2$}(-1,7);

\filldraw[black] (-18,9) circle (5pt) (-9,0) circle (5pt) (-1,7) circle (5pt);
                 
\foreach \x in {-18,-9,-3,-1}  
\draw[thick] (\x,0) -- (\x,-0.2) node[anchor=north] {$\x$};

\draw[thick] (0,0) -- (0.142,-0.142);
\draw (0.7,-0.6) node {$0$};

\foreach \x in {9,7,3} 
  \draw[thick] (0,\x) -- (0.2,\x) node[anchor=west] {$\x$};
\end{tikzpicture}

\begin{tikzpicture}[scale=0.26]
\draw[->] (0,0) -- (0,18);
\draw[->] (0,0) -- (-19,0);
\draw (-10,14) node {$f=3$, $e=6$};
\draw[very thick,green] (-18,12) -- node[anchor=west,black] {$-1$} (-6,0) -- node[anchor=south] {$0$}  (-3,0) -- node[anchor=east] {$\frac{5}{2}$} (-1,5);
\filldraw[green] (-18,12) circle (5pt) (-6,0) circle (5pt) (-3,0) circle (5pt) (-1,5) circle (5pt);

\foreach \x in {-18,-6,-3,-1}  
\draw[thick] (\x,0) -- (\x,-0.2) node[anchor=north] {$\x$};

\draw[thick] (0,0) -- (0.142,-0.142);
\draw (0.7,-0.6) node {$0$};

\foreach \x in {12,5} 
  \draw[thick] (0,\x) -- (0.2,\x) node[anchor=west] {$\x$};

\end{tikzpicture}
\begin{tikzpicture}[scale=0.26]

\draw (-10,14) node {$f=6$, $e=3$};
\draw[->] (0,0) -- (0,18);
\draw[->] (0,0) -- (-19,0);
\draw[very thick,blue, dashed] (-18,15) -- node[anchor=west,black] {$-1$} (-3,0) -- node[anchor=west] {$\frac{3}{2}$}  (-1,3);
\filldraw[very thick,blue]  (-18,15) circle (5pt) (-3,0) circle (5pt) (-1,3) circle (5pt);
\foreach \x in {-18,-3,-1}  
\draw[thick] (\x,0) -- (\x,-0.2) node[anchor=north] {$\x$};

\draw[thick] (0,0) -- (0.142,-0.142);
\draw (0.7,-0.6) node {$0$};

\foreach \x in {15,3} 
  \draw[thick] (0,\x) -- (0.2,\x) node[anchor=west] {$\x$};
\end{tikzpicture}







\caption{Ramification polygons of extensions $\KL$ of degree 18 over 
\(\Q_3\) with \(v_3(\disc(\KL/\Q_3))=30\), also see \href{https://www.lmfdb.org/padicField/?p=3&n=18&c=30&search_type=Families}{the list of these families in the LMFDB}.}
\label{fig ram pol examples}
\end{figure}
}

\subsection*{Hasse-Herbrand function}\label{sec herbrand}
Let \(\varphi\in\OK[x]\) be Eisenstein of degree \(e\) and let \(\alpha\) be a root of \(\varphi\).
Set \(\KL=\KK(\alpha)\) and let 
\(\rho(x)=\frac{\varphi(\alpha x+\alpha)}{\alpha^e}\)
be the ramification polynomial of \(\varphi\).
Denote the endpoints of the segments with positive slope of the ramification polygon 
of \(\varphi\) by \((-p^{w_i},v_\KL(\rho_{p^{w_i}}))\)
where \(0=w_u<\dots<w_0=w\).  Let \(\Phi_{\KL/\KK}:[0,\infty)\rightarrow[0,\infty)\)
be the Hasse-Herbrand function of \(\KL/\KK\), which is defined in \cite[IV\,\S3]{Se} and
discussed in \cite[Section~2.2]{families}.  Then  we have (compare
\cite[Section 2]{Mo} and \cite[Section 3]{pauli-sinclair})
\begin{equation}\label{eq hasse-herbrand}
e\Phi_{\KL/\KK}(m)=\!\min_{0\leq i\leq u}\!\{{v_\KL(\rho_{p^{w_i}}) + m p^{w_i}}\!\}
=\!\min_{1\le i\le e}\! \{{v_\KL(\rho_i)+i\!\cdot\!m}\}=
\cont{\piL}{\rho(\piL^m x)}
\end{equation}
for \(m\ge0\).  Now suppose, \(\varphi\in\OK[x]\) is \(\nu\)-{\O}ystein of degree \(fe\),
\(\Lur=\KK[x]/(\nu)\), \(\KL=\KK[x]/(\varphi)\) with $\nu$-ramification polynomial \(\rho\) and ramification polygon \(\rampol\).
Then \(\Phi_{\Lur/\KK}(x)=x\) and thus
\(\Phi_{\KL/\KK}=\Phi_{\Lur/\KK}\circ\Phi_{\KL/\Lur}=\Phi_{\KL/\Lur}\).
Using (\ref{eq hasse-herbrand}) and Lemma \ref{lem shape} we get
\begin{equation}\label{eq hasse-herbrand 1}
e\Phi_{\KL/\KK}(m)=\min_{1\le i\le n} \{ v_\KL(\rho_i)+i\cdot m \}
=\cont{\nu(\alpha)}{\rho\left(\nu(\alpha)^m x\right)}.
\end{equation}
Because \(\Phi_{\KL/\KK}\) only depends on the ramification polygon \(\rampol\) 
of \(\KL/\KK\), we can set \(\Phi_\rampol=\Phi_{\KL/\KK}\).

\section{Residual polynomials}
\label{sec residual polynomials}

Let \(\rampol\) be the Newton polygon of a polynomial \(\rho\in\OL[x]\).   
To each segment \(\mathcal{S}\) of \(\rampol\) we attach a polynomial 
\(\overline{A}\in\RL[x]\).  These polynomials were first introduced as 
\emph{zugeordnete} (associated) polynomials by Ore 
\cite{ore-newton,montes-nart}
and have been called residual polynomials in more recent work.
The residual polynomials of the segments of the Newton polygon of a polynomial yield information about the unramified part of the extensions generated by the factors of the polynomial.  

Let $\rho(x)=\sum_{i=0}^{n} \rho_i x^i\in\OL[x]$
with roots \(\beta_1,\dots,\beta_n\)
and denote by \(\rampol\) its Newton polygon.
Let \((-i,v_\KL(\rho_i))\) and \((-j,v_\KL(\rho_j))\) be the endpoints of a segment of \(\rampol\)
of slope \(m=\frac{t}{d}\)
and let \(\beta\in\overline{L}\) be a root of \(\rho\) with \(v_\KL(\beta)=m\).
We divide the polynomial \(\rho(\beta x)\) whose roots are
\(\frac{\beta_1}{\beta},\dots,\frac{\beta_n}{\beta}\)
by its content \(\piL^{v_\KL(\rho_j)}\beta^j\) and get
\begin{align*}
\frac{\rho(\beta x)}{\piL^{v_\KL(\rho_j)}\beta^j}
=\sum_{l=0}^n \frac{\rho_l\beta^l x^l}{\piL^{v_\KL(\rho_j)}\beta^j}
\equiv\sum_{l=j}^i \frac{\rho_l\beta^{l-j}}{\piL^{v_\KL(\rho_j)}} x^l
\equiv\!\!\!\sum_{k=0}^{(i-j)/d} \frac{\rho_{j+kd}\beta^{kd}}{\piL^{v_\KL(\rho_j)}} x^{j+kd}
\bmod \piL.
\end{align*}
Let \(\theta=\frac{\beta^d}{\piL^t}\).
Dividing the above by \(x^j\), replacing \(\beta^d\) with \(\theta\piL^t\)
and replacing \(\theta\cdot x^d\) with \(z\) yields the residual polynomial of the segment of
slope \(m\) of \(\rampol\):
\begin{definition}\label{def res}
Let $\rho(x)=\sum_{i=0}^{n} \rho_i x^i\in\OL[x]$ and suppose the Newton polygon
of \(\rho\) has a segment \(\mathcal{S}\) with endpoints 
\(\left(-i,v_\KL(\rho_{i})\right)\)
and 
\(\left(-j,v_\KL(\rho_j)\right)\) 
of slope \(m=\frac{t}{d}\).  Then the 
\emph{residual polynomial} of \(\mathcal{S}\) is
\begin{equation*}
\overline{A}_m(z):=
\sum_{k=0}^{(i-j)/d} \overline{\rho_{j+kd}\piL^{kt-v_\KL(\rho_j)}} z^{k}\in\RL[z].
\end{equation*}
\end{definition}
Each root of \(\overline{A}_m\) has the form \(\overline{{\theta}\cdot\left(\frac{\beta_i}{\beta}\right)^d}
=\overline{\left(\frac{\beta_i^d}{\piL^t}\right)}\), where \(\beta_i\) is a root of \(\rho\) with
\(v_\KL(\beta_i)=m=\frac{t}{d}\).

Denote the slopes of \(\rampol\) by \(m_1<m_2<\dots<m_u\) and the residual polynomials by \(\overline{A}_{m_i}\).  It follows directly from Definition \ref{def res} that the constant coefficient of 
\(\overline{A}_{m_i}\) is equal to the leading coefficient of \(\overline{A}_{m_{i+1}}\).
The nonzero terms of $\overline{A}_m$ correspond to the points of the form $(-h,v_L(\rho_h))$ on the segment of slope $m$.

\subsection*{The residual invariant}
We now consider the residual polynomials attached to the segments of the ramification polygon
$\rampol(\varphi)$ of an Eisenstein  polynomial \(\varphi\).
If \(m_1,\dots,m_u\) are the slopes of \(\rampol(\varphi)\) and 
\(\overline{A}_{m_1}\) 
is the residual polynomial of the segment of slope \(m_i\)
we write
\(\Ainv_{\varphi}=(\overline{A}_{m_1},\dots,\overline{A}_{m_u})\).

Let \(\alpha=\alpha_1,\dots,\alpha_e\in\Ksep\) be the roots of \(\varphi\).  Then
the roots of the ramification polynomial \(\rho(x)=\frac{\varphi(\alpha x+\alpha)}{\alpha^e}\) are of the form \(\frac{\alpha_i-\alpha}{\alpha}\),
and if \(v_L\left(\frac{\alpha_i-\alpha}{\alpha}\right)=m=\frac{t}{d}\) we have 
\(\overline{A}_m\left(\frac{(\alpha_i-\alpha)^d}{\alpha^{d+t}}\right)=0\).

Let \(\delta\in\OL^\times\) and \(\beta=\delta\alpha\).
Then the minimal polynomial \(\psi\in\OK[x]\) of \(\beta\) is Eisenstein and has the same ramification polygon \(\rampol\) as 
\(\varphi\).
Let \(\overline{\left(\frac{(-1+\alpha_i/\alpha)^d}{\alpha^t}\right)}\)
be a root of the residual polynomial \(\Apol_m\) of the ramification polynomial \(\rho\) of 
\(\varphi\) for the segment of \(\rampol\) with slope \(m=\frac{t}{d}\).  Then by
the proof of \cite[Proposition 4.4]{greve-pauli}, 
the corresponding residual polynomial \(\overline{B}_m(z)\) of the ramification polynomial \(\sigma\) of \(\psi\) has a root
\(\overline{\left(\frac{(-1+\beta_i/\beta)^d}{\beta^t}\right)}\)
with
\[
\frac{(-1+\beta_i/\beta)^d}{\beta^t}
\sim
\frac{1}{\delta^t}
\frac{(-1+\alpha_i/\alpha)^d}{\alpha^t}.
\]
Thus \(\overline{B}_m(z)=\overline{\gamma}\overline{A}_m(\overline{\delta}^{t}z)\) for some \(\overline\gamma\in\RK^{\times}\).  

Since the ramification polynomials \(\rho\) and \(\sigma\) are monic 
the respective residual polynomials \(\overline{A}_{m_1}\) and \(\overline{B}_{m_1}\) of the first segment (with slope 
\(m_1=\frac{t_1}{d_1}\))
are also monic.  Therefore we get
\[
\overline{B}_{m_1}(z)=\overline\delta^{(-t_1)\deg A_{m_1}}\overline{A}_{m_1}(\overline{\delta}^{t_1}z).
\]
Because all uniformizers of \(\KL/\KK\) are of the form \(\beta=\delta\alpha\) where \(v_\KL(\delta)=0\) and because the residual
polynomials are only affected by \(\overline{\delta}\in\RK^{\times}\) as described above we obtain an invariant of \(\KL/\KK\). 
Note that our formulation of the invariant \(\Ainv\) differs in the ordering of the polynomials from \cite[Theorem 4.7]{pauli-sinclair}, because of the different ordering of the segments in the ramification polygon.

\begin{theorem}\label{theo A}
Let $\varphi\in\OK[x]$ be an Eisenstein polynomial with ramification polygon $\rampol$.
Denote by $m_{1}=\frac{t_1}{d_1},\dots,m_{u}=\frac{t_u}{d_u}$ the slopes of the segments of
$\rampol$ and let $\overline{A}_{m_i}$ be the residual polynomial of the segment of slope \(m_i\).
Let \(L=K[x]/(\varphi)\).
Then the following is an invariant of \(\KL/\KK\):
\begin{equation}\label{eq A}
\Ainv_{\KL/\KK}=\left\{\left(
\overline\gamma_1 \overline{A}_{m_1}(\overline{\delta}^{t_1}z),
\overline\gamma_2 \overline{A}_{m_2}(\overline{\delta}^{t_2}z),
\dots,
\overline\gamma_u \overline{A}_{m_u}(\overline{\delta}^{t_u}z)
\right) :\overline\delta\in\RK^\times
\right\}
\end{equation}
where
\(\overline\gamma_1=\overline\delta^{(-t_1)\deg A_{m_1}}\) and
\(\overline\gamma_i=\overline\gamma_{i-1}\displaystyle\overline\delta^{(-t_i)\deg A_{m_i}}\) for \(i=2,\dots,u\). 
We write \[\Ainv_{\KL/\KK}=[\overline{A}_{m_1},\dots,\overline{A}_{m_u}].\]
\end{theorem}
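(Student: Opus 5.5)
We must show that the set in (\ref{eq A}) does not depend on the choice of Eisenstein polynomial $\varphi$ defining $\KL/\KK$, i.e.\ on the uniformizer $\alpha$. The plan is to show that a change of uniformizer acts on the tuple of residual polynomials exactly by the transformation appearing in (\ref{eq A}), and that these transformations form a group action of $\RK^\times$. The set $\Ainv_{\KL/\KK}$ is then an orbit, hence independent of the starting representative.

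First, let $\psi$ be another Eisenstein polynomial with $\KK[x]/(\psi)\cong\KL$, and choose a root $\beta$ of $\psi$ in $\KL$. Then $\beta$ is a uniformizer, so $\beta=\delta\alpha$ with $\delta\in\OL^\times$. Because $\KL/\KK$ is totally ramified, $\overline\delta$ lies in $\RL=\RK$, which is why the parameter ranges over $\RK^\times$. The ramification polygon is $\rampol_{\KL/\KK}$ for both polynomials, by Lemma \ref{lem shape} (here $f=1$) or directly from the valuations of root differences. So the slopes $m_i=t_i/d_i$ and the residual polynomials $\overline{B}_{m_i}$ of $\psi$ are defined on the same segments.

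Second, I relate the roots of the residual polynomials, segment by segment. The roots of $\overline{A}_{m}$ are the residues of $(\alpha_i-\alpha)^d/\alpha^{d+t}$ for roots $\alpha_i$ with $v_\KL((\alpha_i-\alpha)/\alpha)=m$. Take conjugates $\beta_i=\sigma_i(\beta)=\sigma_i(\delta)\alpha_i$, and write $\sigma_i(\delta)=\delta+\epsilon_i$ with $v_\KL(\epsilon_i)\ge v_\KL(\alpha_i-\alpha)$. This bound holds because $\delta$ is a power series in $\alpha$ with coefficients in $\OK$. The delicate point, which I expect to be the main obstacle, is controlling the first-order term. Expanding $\delta$ in $\alpha$ gives
\[
\beta_i-\beta=\delta(\alpha_i-\alpha)+\alpha(\sigma_i(\delta)-\delta),
\]
and $\sigma_i(\delta)-\delta\approx\delta'(\alpha)(\alpha_i-\alpha)$. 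Only $\delta_0=\overline\delta$ contributes at the leading order, since higher-order terms in $\alpha$ gain positive valuation. Hence
\[
(-1+\beta_i/\beta)\sim(-1+\alpha_i/\alpha)(1+u),
\]
with $\overline{1+u}=1$ after normalisation. (This is precisely the content of the proof of \cite[Proposition 4.4]{greve-pauli}, quoted above, which I would invoke.) Raising to the $d$-th power and dividing by $\beta^t=\delta^t\alpha^t$ shows that the roots of $\overline{B}_m$ are the roots of $\overline{A}_m$ multiplied by $\overline\delta^{-t}$. Therefore $\overline{B}_m(z)=c\,\overline{A}_m(\overline\delta^{t}z)$ for some $c\in\RK^\times$, since both polynomials have the same degree and the same roots with multiplicity.

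Third, I pin down the scalars $c$ using the structure of $\rho$. For the first segment, $\rho$ and $\sigma$ are monic, so $\overline{A}_{m_1}$ and $\overline{B}_{m_1}$ are monic. Comparing leading coefficients gives $c=\overline\delta^{-t_1\deg A_{m_1}}=\overline\gamma_1$. Inductively, the leading coefficient of $\overline{B}_{m_i}$ equals the constant coefficient of $\overline{B}_{m_{i-1}}$, by the remark after Definition \ref{def res}. That constant coefficient is $\overline\gamma_{i-1}$ times the constant coefficient of $\overline{A}_{m_{i-1}}$, which in turn equals the leading coefficient of $\overline{A}_{m_i}$. Hence $c=\overline\gamma_{i-1}\,\overline\delta^{-t_i\deg A_{m_i}}$.

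I would double-check this last exponent carefully against the normalisation in Definition \ref{def res}. The $z^k$ coefficient scales by $\overline\delta^{tk}$, so the leading term $z^{\deg A}$ scales by $\overline\delta^{t\deg A}$, which explains the stated $\overline\gamma_i$.

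Finally, I verify that $\overline\delta\mapsto$ (this transformation) is a group action of $\RK^\times$ on tuples. Composing with $\overline\delta_1$ and then $\overline\delta_2$ gives the transformation for $\overline\delta_1\overline\delta_2$, because the $\overline\gamma_i$ are multiplicative in $\overline\delta$. So the set in (\ref{eq A}) is the orbit of $(\overline{A}_{m_1},\dots,\overline{A}_{m_u})$. Since every uniformizer has the form $\delta\alpha$, the orbit computed from $\psi$ coincides with the orbit computed from $\varphi$, which proves the invariance.
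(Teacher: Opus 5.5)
Your proposal is correct and follows the paper's argument: you write the new uniformizer as $\beta=\delta\alpha$, use the root relation from the proof of \cite[Proposition 4.4]{greve-pauli} to get $\overline{B}_m(z)=c\,\overline{A}_m(\overline\delta^{t}z)$, and fix the scalars via monicity of the first residual polynomial and the matching of constant and leading coefficients of consecutive segments. You go slightly beyond the paper by making the induction for $\overline\gamma_i$ ($i\ge 2$) and the group-action/orbit argument explicit; the only slip is that the second term in your expansion of $\beta_i-\beta$ should be $\alpha_i(\sigma_i(\delta)-\delta)$ rather than $\alpha(\sigma_i(\delta)-\delta)$, which differs by a higher-order term and does not affect the conclusion.
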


We say that \(\Ainv_{\KL/\KK}\) is the {\em residual invariant} of the extension \(\KL/\KK\).
This invariant is an important tool in the enumeration of generating polynomials for extensions
of local fields \cite{pauli-sinclair}.
Together with the ramification polygon and the constant coefficient modulo \(\piK^2\)
the residual invariant determines a subextension $T$ of the splitting field $N$ of 
an Eisenstein polynomial such that $N/T$ is a $p$-extension
\cite[Theorem 9.1]{greve-pauli}.
This data also determines the Galois groups of Eisenstein 
polynomials whose ramification polygon consists of a single segment \cite[Section 8]{greve-pauli}.

\subsection*{The residual invariant for general extensions}

It is natural to extend the definition of the residual invariant to include extensions
\(\KL/\KK\) generated by a root \(\alpha\) of an {\O}ystein polynomial
\(\varphi(x)=\nu(x)^e+\sum_{i=0}^{e-1}\varphi^*_i(x)\nu(x)^i\) by setting
\[
\overline{B}_m(z)=\sum_{k=0}^{(i-j)/d}\overline{\rho_{j+kd} \nu(\alpha)^{kt-v_L(\rho_j)}}z^{k}\in\RL[z].
\]
Unfortunately, in this setting there may exist $\delta_1,\delta_2 \in \OK^\times$ with 
$\delta_1 \equiv \delta_2 \pmod{\pi_K}$ but
\(\overline\gamma \overline{B}_m(\overline\delta_1^t)\ne \overline\gamma \overline{B}_m(\overline\delta_2^t)\)
for some slope \(m=\frac{d}{t}\), which means that the equivalence relation used to define 
\(\Ainv_{\KL/\KK}\) in the Eisenstein case may not be well-defined.
To avoid this difficulty we base the residual invariant of \(\KL/\Lur/\KK\) on \(\Ainv_{\KL/\Lur}\).
Therefore we need to consider the action of \(\Gal(\Lur/\KK)\cong\Gal(\RL/\RK)\) on \(\Ainv_{\KL/\Lur}\):

\begin{definition}\label{def res poly class}
Let \(\KL/\Lur/\KK\) and let 
\((\overline{A}_{m_1},\dots,\overline{A}_{m_u})\) be a representative of the residual invariant \(\Ainv_{\KL/\Lur}\) of the totally ramified extension 
\(\KL/\Lur\).
Let \(\gamma_i\) ($1\le i\le u$) be as in Theorem \ref{theo A}.
We call
\[
\Ainv_{\KL/\KK}=\left\{\left(
\sigma\bigl(\overline\gamma_1 \overline{A}_{m_1}\!(\overline{\delta}^{t_1}\!z)\bigr),
\dots,
\sigma\bigl(\overline\gamma_u \overline{A}_{m_u}\!(\overline{\delta}^{t_u}\!z)\bigr)
\right) :\overline\delta\!\in\!\RL^\times\!\!,\;\sigma\!\in\!\Aut(\RLur/\RK)
\right\}
\]
the \emph{residual invariant} of \(\KL/\KK\). 
\end{definition}

\subsection*{Distinguished residual polynomials}
\label{sec dist res}

We choose a distinguished representative of each residual invariant \(\Ainv_{\KL/\KK}\)
defined in Theorem \ref{theo A} using the following ordering:

\begin{definition}\label{def dist res}
Let \(\langle \overline\xi\rangle =\Fq^\times \), \(g(z)=\sum_{i=0}^{n} g_i z^i \in \Fq[z]\) and
\(h(z)=\sum_{i=0}^n h_i z^i \in \Fq[z]\).
We write \(g >_{\overline\xi} h\) when \(g_i = h_i\) 
for \(i>j\) and \(\log_{\overline\xi} g_j > \log_{\overline\xi} h_j\) for some \(0\le j\le n\).
With lexicographic ordering, we extend this ordering to the tuples of polynomials in the residual invariant of an Eisenstein polynomial. We call the smallest tuple in the residual invariant distinguished.
\end{definition}

To find the distinguished representative of \(\Ainv\), we find \(\overline\delta\in\RK^{\times}\) such that
\begin{equation}\label{eq A tuple}
\left(
\overline\gamma_1 \overline{A}_{m_1}(\overline{\delta}^{t_1}z),
\overline\gamma_2 \overline{A}_{m_2}(\overline{\delta}^{t_2}z),
\dots,
\overline\gamma_u \overline{A}_{m_u}(\overline{\delta}^{t_u}z)
\right)
\end{equation}
is minimal with respect to \(>_{\xiRK}\)
where
\(\overline\gamma_1=\overline\delta^{(-t_1)\deg A_{m_1}}\) and
\(\overline\gamma_i=\overline\gamma_{i-1}\displaystyle\overline\delta^{(-t_i)\deg A_{m_i}}\) for \(i=2,\dots,u\). 
Repeated applications of Lemma \ref{lem min} below yield \(l=b+c\Delta\) such that 
(\ref{eq A tuple}) is minimal for \(\overline\delta=\xiRK^{l}\).

When \(a=bq+r\) for \(a,q\in\Z\), \(b\in\N\), and \(0\le r<b\) we write \(r=a\bmod b\)
and \(q=a\bdiv b\).  The following elementary lemma will be useful:

\begin{lemma}\label{lem min}
Let \(Q,e\in\N\) and \(a\in\N_0\).  
Set \(g=\gcd(e,Q)=se+tQ\), \(d=a\bdiv g\), \(\Delta=Q \bdiv g\), 
\(b=(-s\cdot d)\bmod Q\), and
\[
m= \min\left\{(a+k\cdot e) \bmod Q: k\in\Z\right\}.
\]
Then $m=a\bmod g$, and $m=(a+k\cdot e)\bmod Q$ if and only if
$k=b+c\Delta$ for some $c\in\Z$.
\end{lemma}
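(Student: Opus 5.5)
The plan is to determine the set of residues $\{(a+ke)\bmod Q : k\in\Z\}$ exactly, and then solve a linear congruence for $k$. Everything takes place in $\Z/Q\Z$. The key identity is $g=se+tQ$, which says that $s\cdot(e/g)\equiv 1 \pmod{\Delta}$.

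First, the set $\{ke \bmod Q: k\in\Z\}$ is the subgroup of $\Z/Q\Z$ generated by $e$. This subgroup equals the subgroup generated by $\gcd(e,Q)=g$: it contains $se+tQ\equiv g$, and every multiple of $e$ is a multiple of $g$. Hence the residues $(a+ke)\bmod Q$ are exactly the integers in $[0,Q)$ congruent to $a$ modulo $g$. Write $a=dg+r$ with $r=a\bmod g$ and $d=a\bdiv g$. Since $0\le r<g\le Q$, the value $r$ itself lies in this set, and no smaller nonnegative integer congruent to $a$ modulo $g$ exists. This proves $m=a\bmod g$.

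Second, characterize the $k$ that attain the minimum. We have $(a+ke)\bmod Q = r$ if and only if $a+ke\equiv r \pmod Q$, i.e.\ $ke\equiv -dg \pmod Q$. Dividing through by $g$, this is equivalent to $k\,(e/g)\equiv -d \pmod{\Delta}$ with $\Delta=Q/g$. Reducing $g=se+tQ$ modulo $Q$ and dividing by $g$ gives $s(e/g)\equiv 1\pmod\Delta$. So $e/g$ is invertible modulo $\Delta$ with inverse $s$, and the congruence is equivalent to $k\equiv -sd\pmod\Delta$.

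Finally, $b=(-sd)\bmod Q$ satisfies $b\equiv -sd \pmod \Delta$ because $\Delta\mid Q$. The solution set is therefore precisely $\{b+c\Delta: c\in\Z\}$.

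The only point needing a little care is the division by $g$: $g\mid Q$ and $g\mid e$, so $ke\equiv -dg\pmod Q$ is equivalent to $k(e/g)\equiv -d\pmod{Q/g}$. There is also the edge case $\Delta=1$ (i.e.\ $g=Q$), where every $k$ works; this is consistent with the statement, since then $a+ke\equiv a\pmod Q$ for all $k$. I expect no real obstacle; the argument is elementary throughout.
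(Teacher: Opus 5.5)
Your proof is correct and complete. The attainable residues are exactly $[0,Q)\cap(a+g\Z)$, which gives $m=a\bmod g$. Dividing $ke\equiv -dg\pmod Q$ by $g$ and using $s(e/g)\equiv 1\pmod{\Delta}$ then yields precisely the class $k\equiv b\pmod{\Delta}$.

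The paper states this lemma without proof, calling it elementary, so there is no argument of its own to compare against. Yours is the standard one. It also makes clear that the conclusion does not depend on which Bézout coefficient $s$ is chosen, nor on reducing $b$ modulo $Q$ rather than modulo $\Delta$.
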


\begin{definition}\label{def dist res conj}
The \emph{distinguished residual polynomial tuple} of \(\KL/\Lur/\KK\) 
is the tuple that is distinguished among the conjugates of the residual invariant
of \(\KL/\Lur\) under the automorphisms of \(\Lur/\KK\).
\end{definition}

\arxivonly{
\begin{example}[\href{https://www.lmfdb.org/padicField/family/3.2.9.30b}{LMFDB 3.2.9.30b}]
Let \(\Q_3(\xi)\) be the unramified extension of \(\Q_3\) of degree \(2\).
We consider the extensions \(\KL/\Q_3(\xi)\) with ramification polygon 
\[
\rampol=\{(-9,0),(-3,3),(-1,7)\}
\]
Their residual polynomials are of the form:
\[
(z^3+\gamma, \gamma z^2+\delta) \text{ where }\gamma\in\F_9^\times, \delta\in \F_9^\times
\]
These \(64\) polynomials form \(8\) residual invariants which we represent by their distinguished representatives:
\[
\{ [z^3+1,z^2+\overline\xi^j] : 0\le j<8\}
\]
Over \(\Q_3\) we also consider conjugation by the nontrivial automorphism
\(\sigma(\overline{\xi})=\overline\xi^3\) of \(\F_3(\overline{\xi})/\F_3\).
Here we get \(5\) residual invariants:
\begin{align*}
&[z^3 + 1,z^2 + 1],\;[z^3 + 1,z^2 + 2],\; [z^3 + 1,z^2 + \overline\xi]=[z^3 + 1,z^2 + \overline\xi^3],\\
&[z^3 + 1,z^2 + \overline\xi^2]=[z^3 + 1,z^2 + \overline\xi^6],\;[z^3 + 1,z^2 + \overline\xi^5]=[z^3 + 1,z^2 + \overline\xi^7].
\end{align*}
Thus the five distinguished residual polynomial tuples of the extensions \(\KL/\Q_3\) are 
\((z^3 + 1,z^2 + 1)\), 
\((z^3 + 1,z^2 + 2)\), 
\((z^3 + 1,z^2 + \overline\xi)\), 
\((z^3 + 1,z^2 + \overline\xi^2)\), and 
\((z^3 + 1,z^2 + \overline\xi^5)\).
\end{example}
}

\subsection*{Families and subfamilies}
Let \(\varphi\in\OK[x]\) be Eisenstein and set \(\KL=\KK[x]/(\varphi)\).
Then the first \(\piK\)-adic digit of the constant coefficient
of all defining Eisenstein polynomials of \(\KL/\KK\) is in 
\(
\Cinv_{\KL/\KK}= 
\{\overline\varphi_{0,1}\delta^n:\delta\in\RK^\times\}\),
which is an invariant of \(\KL/\KK\).

For an {\O}ystein polynomial \(\varphi\in\OK[x]\) we can choose 
\(\varphi^*_{0,1}\) such \(\overline\psi_{0,1}=\overline\varphi^*_{0,1}(\overline\alpha)\) where \(\psi\in\Lur[x]\) is a defining Eisenstein polynomial of 
\(\KL=\KK[x]/(\varphi)\) over \(\Lur\).  
Since the \(\Aut(\Lur/\KK)\)-conjugates of \(\psi\) yield the same 
extension over \(\KK\), in this setting the invariant becomes
\(
\Cinv_{\KL/\KK}= 
\{\sigma(\overline\varphi_{0,1})\delta^n:\sigma \in \Aut(\RL/\RK),\,\delta\in\RL^\times\}
\).
To combine \(\Cinv_{\KL/\KK}\) with the residual invariant  
\(\Ainv_{\KL/\KK}\) we restrict the transformations of the residual 
polynomials to those that fix a representative of 
\(\Cinv_{\KL/\KK}\).  We set (compare \cite[(4.2)]{pauli-sinclair} for the totally ramified case):
\[
\Ainv_{\KL/\KK}^*=\left\{\left(
\sigma\bigl(\overline\gamma_1 \overline{A}_{m_1}\!(\overline{\delta}^{t_1}\!z)\bigr),
\dots,
\sigma\bigl(\overline\gamma_u \overline{A}_{m_u}\!(\overline{\delta}^{t_u}\!z)\bigr)
\right) :\overline\delta\!\in\!\RL^\times\!\!,\,\delta^n\!\!=\!1,\,\sigma\!\in\!\Aut(\RLur/\RK)
\right\}
\]
with \(\gamma_i\) as in Theorem \ref{theo A}.
With the above we refine the definition of family from
\cite{Mo} and \cite[Section 1.2]{families} and obtain:


\begin{definition}
A \emph{family} \(\mathcal{F}\) of extensions of \(\KK\) is the set of all extensions
of \(\KK\) that have the same ramification polygon.
We call the subset of all extensions \(\KL/\KK\) in \(\mathcal{F}\) that have the same 
\(\Cinv_{\KL/\KK}\) and the same \(\Ainv_{\KL/\KK}^*\) a \emph{subfamily}.
\end{definition}

This notion of subfamily is used in defining labels for $p$-adic fields in the LMFDB, but does not reappear in the remainder of this article.

\section{Reduction} \label{sec reduction}

Reduced Eisenstein polynomials were introduced by Krasner \cite{krasner-cluj} and their theory was brought into modern form by Monge \cite{Mo}.  We recall their construction and generalize it to {\O}ystein polynomials.  

\subsection*{Zeroth reduction step for Eisenstein polynomials} \label{sec first eisenstein}
We refine the first reduction step \cite[Algorithm 1]{Mo} for an Eisenstein polynomial
\(\varphi\in\OK[x]\), in which the
leading coefficient \(\varphi_{0,1}\) of the constant term \(\varphi_0\) is reduced.
Because transforming \(\varphi_{0,1}\) also impacts the residual polynomials,
out of all the transformations that lead to a reduced value for \(\varphi_{0,1}\),
we choose those that yield the smallest residual polynomials with respect to \(>_{\xiRK}\)
(see Definition \ref{def dist res}).

Fix \(\xiRK\in\RK\) such that \(\langle\xiRK\rangle=\RK^\times\) and let \(\xiK\in\OK\) be a lift of \(\xiRK\).
We first find all \(s\in\N_0\) such that the transformation \(x\mapsto x\xiK^{-s}\)
yields polynomials
\[
\psi(x)=\xiK^{es}\varphi(x \xiK^{-s})=x^e+\dots+\varphi_{0,1}\xiK^{es}
\]
with \(\log_{\xiRK}(\overline\varphi_{0,1}\xiRK^{es})\) minimal.
That is, with Lemma \ref{lem min} we find all \(s\) for which 
\[(a+s\cdot e) \bmod (q-1)= \min\left\{(a+k\cdot e) \bmod (q-1): 0\le k \le q-2\right\}
\] 
where \(a=\log_{\xiRK}\overline\varphi_{0,1}\).  Among these polynomials we find those whose residual polynomials are minimal given the 
constraints above.  This choice assures compatibility with the generic defining polynomials \cite{families} but also means that the residual polynomials of distinguished polynomials in general do not match their distinguished representatives.  

The definition of being a {\em distinguished} $\nu$-{\O}ystein polynomial is given in Definition~\ref{con main} below.  We build up the definition through a series of conditions.

\begin{condition}\label{con dist const}
For an Eisenstein polynomial \(\varphi\in\OK[x]\) to be distinguished we require that
for all Eisenstein polynomials \(\psi\) with 
\(\KK[x]/(\psi)\cong\KK[x]/(\varphi)\) we have 
\begin{enumerate}
\item \(\log_{\xiRK}(\overline{\varphi_{0,1}})\le \log_{\xiRK}(\overline{\psi_{0,1}})\) and 
\item \(\Ainv_\varphi\le\Ainv_\psi\) when \(\log_{\xiRK}(\overline{\varphi_{0,1}})=\log_{\xiRK}(\overline{\psi_{0,1}})\). 
%
\end{enumerate}
\end{condition}
When \(e=p^w\)  exactly one representative of \(\mathcal{A}_{\KL/\KK}\) satisfies Condition
\ref{con dist const}.
Reducing \(\varphi_{0,1}\) is the first step in our reduction algorithm for Eisenstein polynomials.

\Algo{polred0ram}{alg polred0ram}
{
Eisenstein \(\varphi\in\OK[x]\) of degree \(e\) with ramification polynomial \(\rho\)
}
{
\(\left\{ \psi(x)=\delta^e\varphi(\frac{1}{\delta x}):\delta\in\RepK \text{ and } \psi \text{ satisfies Condition \ref{con dist const}} \right\}\) 
and a distinguished representative of \(\Ainv_{\KL/\KK}\) 
}
{
Write \(\langle\xiRK\rangle=\RK^\times\).\\[-1ex]
\begin{enumerate}
\item compute the residual polynomials \((A_{m_1},\dots A_{m_u})\) of \(\rho\).
\item find the set \(E\) of all \(\delta\in \RK^\times\) such that 
\(
\log_{\xiRK}(\varphi_{0,1}\delta^e)\bmod (\card\RK-1)
\) is minimal.
\item find the set \(D\) of all \(\delta\in E\) such that
\[
(B_{m_1},\dots,B_{m_u})=
\left(
\overline\gamma_1 \overline{A}_{m_1}(\overline{\delta}^{t_1}z),
\dots,
\overline\gamma_u \overline{A}_{m_u}(\overline{\delta}^{t_u}z)
\right)
\] is minimal with respect to \(\ge_{\xiRK}\) (see Theorem \ref{theo A} and Definition \ref{def dist res}).
\item return  \(\left\{ \psi(x)=\delta^e\varphi(\frac{1}{\delta}x):\delta\in D\right\}\)
and \((B_{m_1},\dots,B_{m_u})\).
\end{enumerate}
}
\arxivonly{
\begin{example}
\label{ex polred0ram}
Let \(K=\Q_3(\xi)\) where \(\nu(\xi)=0\) for \(\nu(x)=x^3 + 2x + 1\)
and \[\varphi(x)=x^6 + 3\xi^{4}x^5 + 9\xi^{19}x + 3\xi^3\in\OK[x].\]
We apply the zeroth reduction step Algorithm \ref{alg polred0ram} to \(\varphi\).

The ramification polygon of \(\varphi\) is \((-6,0),(-3,0),(-1,5)\) with slopes \(0,\frac{5}{2}\).
and residual polynomials \((z^3 + 2, 2z + \overline\xi)\).
Applying Lemma \ref{lem min} twice we obtain
the distinguished representative
\[
((z^3+2),\overline\delta^{(-5)}\bigl(2\overline\delta^5 z+\overline\xi\bigl))=
(z^3 + 2,2z + 1)\quad \text{where }\overline\delta=\overline\xi^{21}
\]
of the residual invariant \([z^3 + 2,2z + \overline\xi]\).

We get minimal \(\log_{\overline\xi}\overline\xi^3\gamma^e=1\) for \(\gamma\in\{\overline\xi^4,\overline\xi^{17}\}\).
Thus the zeroth reduction step yields 
\begin{align*}
\psi_{1}(x)&=\xi^{6\cdot 4}\varphi(\xi^{-4}x)=
x^6 + 3\xi^{14} x^5 + 9\xi^{39}x + 3\xi^{27} \\
\psi_{2}(x)&=\xi^{6\cdot 17}\varphi(\xi^{-17}x)=
x^6 + 3\xi^{21} x^5 + 9\xi^{104}x + 3\xi^{105}
\end{align*}
The residual polynomials of \(\psi_1\) and \(\psi_2\)
are 
\(B_1=(z^3+2,2z+\overline\xi^{18})\) \and \(B_2=(z^3+2,2z+\overline\xi^7)\) respectively.
Since \(B_1\ge_\xi B_2\) the zeroth reduction step returns \(\psi_2\).
\end{example}
}

\subsection*{Constant coefficient of {\O}ystein polynomials}

Because for {\O}ystein polynomials we do not have a straightforward connection between constant coefficient and residual polynomial transformations, the following is simpler than Condition \ref{con dist const}.

Let $\varphi(x)=\nu(x)^{e}+\sum_{i=0}^{e-1}\varphi^*_i(x) \nu(x)^i \in \OK[x]$ be \(\nu\)-{\O}ystein.
In the following we will be concerned with the reduction of the \(\varphi^*_i\).
We write
\(\varphi_i^*(x)=\sum_{k=0}^\infty \varphi^*_{i,k}{\piK^k}\) 
where $\varphi^*_{i,k}=\sum_{j=0}^{f-1}\varphi^*_{i,k,j}x^j$ with \(\varphi^*_{i,k,j}\in\RepK\).

\begin{condition}\label{con dist const oystein}
Let  \(\varphi\in\OK[x]\) be \(\nu\)-{\O}ystein.  For \(\varphi\) to be distinguished we require that
for all \(\nu\)-{\O}ystein polynomials \(\psi\in\OK[x]\) with 
\(\KK[x]/(\psi)\cong\KK[x]/(\varphi)\) we have
\[
\log_{\xiRL}(\overline{\varphi}^*_{0,1}(\xiRL))\le \log_{\xiRL}(\overline\psi^*_{0,1}(\xiRL)).
\]
\end{condition}

\subsection*{Additive residual polynomials}
In \cite{pauli-sinclair} additive residual polynomials are called residual polynomials of components; in \cite{Mo} they are denoted by \(S_m\); and in \cite{krasner-cluj} they are denoted by \(U_m\).

\begin{definition}\label{def A add}
Let $\varphi(x)=\nu(x)^{e}+\sum_{i=0}^{e-1}\varphi^*_i(x) \nu(x)^i \in \OK[x]$ be \(\nu\)-{\O}ystein, $\alpha$ a root of $\varphi$, and $\rho(x)=\varphi(\nu(\alpha)x+\alpha)/\nu(\alpha)^e$ the ramification polynomial of $\varphi$.
For \(m\in\N_0\) we call
\[
\Aadd_m(x)
=\overline{\left(\frac{\rho(\nu(\alpha)^m x)}{\cont{\nu(\alpha)}{\rho(\nu(\alpha)^m x)}}\right)}
\]
the \textit{additive residual polynomial} of \(\varphi\) attached to the slope \(m\).
\end{definition}

\begin{lemma} \label{lem additive}
For \(m\in\N\), \(\Aadd_m(x)\) is additive.
Suppose the ramification polygon \(\rampol\) of \(\varphi\) has a segment of slope \(m\)
whose endpoints have abscissas \(-p^u,-p^v\), with \(0\le v<u\le w\).  Then
\(\Aadd_m(x)\) has top degree \(p^u\) and bottom degree \(p^v\).
\end{lemma}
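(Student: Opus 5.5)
The plan is to prove both claims at once by determining which monomials of $\rho(\nu(\alpha)^m x)$ reach its content. By (\ref{eq hasse-herbrand 1}), $\cont{\nu(\alpha)}{\rho(\nu(\alpha)^m x)}=\nu(\alpha)^{e\Phi_\rampol(m)}$ with $e\Phi_\rampol(m)=\min_{1\le i\le n}\{v_\KL(\rho_i)+im\}$. Note $\rho_0=\varphi(\alpha)/\nu(\alpha)^e=0$. Hence
\[
\Aadd_m(x)=\sum_{i\in I_m}\overline{\rho_i\,\nu(\alpha)^{im-e\Phi_\rampol(m)}}\,x^i ,
\]
where $I_m$ is the set of $i\ge 1$ with $v_\KL(\rho_i)+im=e\Phi_\rampol(m)$. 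Since $\RL$ has characteristic $p$, any polynomial all of whose monomials are $x^{p^k}$ is additive. So it suffices to show:
\begin{itemize}
\item[(a)] for integral $m\ge1$, every $i\in I_m$ is a power of $p$;
\item[(b)] the largest and smallest elements of $I_m$ are the abscissas (up to sign) of the endpoints of the supporting segment of slope $m$, or the single vertex when no segment has that slope.
\end{itemize}
Claim (b) is the statement on top degree $p^u$ and bottom degree $p^v$.

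First I would discard the segment of slope $-1$ from Lemma \ref{lem shape}. For $i>e$ we have $v_\KL(\rho_i)+im\ge im>em\ge e\Phi_\rampol(m)$, using $\Phi(m)\le m$. So only $1\le i\le e$ matters. By the proof of Lemma \ref{lem shape} this part of $\rho$ matches the Eisenstein ramification polynomial of $\KL/\Lur$ up to units, so I may assume $f=1$ and $\varphi$ Eisenstein over $\Lur$, with $\nu(\alpha)=\piL$. Then (b) is immediate. The points $(-i,v_\KL(\rho_i))$ lie on or above $\rampol$, and those on the supporting line of slope $m$ are exactly the lattice points of $\rampol$ on it. The extreme ones are the vertices, which by (\ref{eq ram pol}) are at $-p^{w_j}$. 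These vertex coefficients have the minimal valuation, so their residues are nonzero.

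The main obstacle is (a): showing that points $(-i,v_\KL(\rho_i))$ with $i$ not a $p$-power lie strictly above the line $y=e\Phi(m)-im$, even when they are lattice points on a segment of $\rampol$. I would try the root-theoretic route first. The roots of $\rho(\piL^m x)$ are $(\alpha_k-\alpha)/\piL^{m+1}$. So $\Aadd_m$ equals, up to a unit and a monomial factor $x^{a}$, the reduction of $\prod(x-\overline{(\alpha_k-\alpha)/\piL^{m+1}})$ over the roots with $v_\KL(\alpha_k-\alpha)\ge m+1$. Its roots therefore form the set
\[
R_m=\Bigl\{\overline{(\alpha_k-\alpha)/\piL^{m+1}} : v_\KL(\alpha_k-\alpha)\ge m+1\Bigr\}\subseteq\RL ,
\]
taken with multiplicities. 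I would show $R_m$ is an additive subgroup with all multiplicities equal to a common power of $p$. Then $\Aadd_m$ is, up to scaling, $x^{p^v}$-composed with $\prod_{r\in R_m}(x-r)$ raised to a $p$-power, and such a polynomial is additive.

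To get the group structure, I would pass to a Galois closure $N/\Lur$ and use that the $\alpha_k$ are $\sigma(\alpha)$. The roots counted in $R_m$ come from $\sigma$ in the ramification group $G_{\Phi(m)}$ in upper numbering, pulled back to the index-$m$ group in $\KL$'s numbering. For $\sigma,\tau$ in this group, write $\sigma\tau\alpha-\alpha=\sigma(\tau\alpha-\alpha)+(\sigma\alpha-\alpha)$. Since $\sigma$ acts trivially modulo a high enough power of the uniformizer and $m$ is an integer, $\sigma(\tau\alpha-\alpha)\equiv\tau\alpha-\alpha$ modulo $\piL^{m+2}$. This is where integrality of $m$ is used. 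Hence $\sigma\mapsto\overline{(\sigma\alpha-\alpha)/\piL^{m+1}}$ is a homomorphism into $(\RL,+)$, and its fibres all have the same size. If this bookkeeping through $N$ becomes messy, the fallback is the direct coefficient estimate. That estimate uses (\ref{eq rho coeff}), the bound $v_p\binom{j}{i}\ge v_p(j)-v_p(i)$, and $v_\KL(\varphi_j)\ge1$ to show that $v_\KL(\rho_i)+im$ exceeds the minimum whenever $i$ is not a $p$-power and $m\in\N$.
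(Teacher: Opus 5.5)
Your argument is correct in outline, but for the core claim it takes a different route from the paper. The paper writes $\Aadd_m$ as a scalar times $\prod\bigl(x-\overline{(\widetilde\alpha-\alpha)/\nu(\alpha)^{m+1}}\bigr)$, taken over the roots with $v_\KL(\widetilde\alpha-\alpha)\ge m+1$. It transports this to the Eisenstein minimal polynomial $\psi$ of $\piL=\nu(\alpha)$ over $\Lur$ using $\nu(\widetilde\alpha)-\nu(\alpha)\sim\nu'(\alpha)(\widetilde\alpha-\alpha)$, so the two additive residual polynomials differ only by $x\mapsto\overline{\nu'(\alpha)}x$ and a scalar. It then cites Scherk's Lemma 1 for both additivity and the degrees in the Eisenstein case.

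You instead read the degrees off the vertex structure of $\rampol$ in (\ref{eq ram pol}). More substantially, you prove additivity yourself, via the classical fact that $\sigma\mapsto\overline{(\sigma\alpha-\alpha)/\piL^{m+1}}$ is a homomorphism on a wild ramification group of the Galois closure $N$. Hence the roots of $\Aadd_m$ form an $\F_p$-subspace with uniform $p$-power multiplicity, and $\Aadd_m$ is a scalar times a $p$-power of a subspace polynomial. Your route explains why additivity holds and is self-contained modulo standard ramification theory. The paper's route is shorter and gives the explicit comparison with the Eisenstein additive residual polynomial.

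Several steps need repair.

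First, the reduction to $f=1$ does not follow from the proof of Lemma \ref{lem shape}. That proof only matches valuations of roots, hence the polygon; it does not say which non-vertex coefficients attain the minimum. Either use the paper's relation above, or drop the reduction: your Galois argument runs verbatim on $\varphi$ with $\piL=\nu(\alpha)$, because $v_\KL(\nu(\sigma\alpha)-\nu(\alpha))=v_\KL(\sigma\alpha-\alpha)$.

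Second, make explicit that the homomorphism is defined on elements of the wild inertia group, e.g. on $G^{\Phi(m)}$ via $\{\sigma: v_\KL(\sigma\alpha-\alpha)\ge m+1\}=G^{\Phi(m)}\Gal(N/\KL)$. It cannot be defined on that whole set, since elements of $\Gal(N/\KL)$ can move residues.
\begin{itemize}
\item What makes your congruence work is that $\sigma(y)/y\equiv 1$ modulo the maximal ideal of $N$ for every $y\in N^\times$. This gives agreement modulo $\piL^{m+1}$ times that ideal, not necessarily modulo $\piL^{m+2}$.
\item Integrality of $m$ plays no role at that point.
\item The residues lie in the residue field of $N$, not necessarily in $\RL$ (think of $x^3+x$ over $\F_3$).
\item The common multiplicity is a power of $p$ because $G^{\Phi(m)}$ is a $p$-group.
\end{itemize}

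Finally, the fallback coefficient estimate would not work as sketched. Binomial bounds give only lower bounds on $v_\KL(\rho_i)$. Beating $e\Phi_\rampol(m)$ needs an upper bound, i.e. the exact valuation of some $\rho_{p^s}$, and there cancellation can occur.
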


\begin{proof}
Set \(\pi_L=\nu(\alpha)\), \(\varphi_m(x)=\varphi(\pi_L^{m+1}x+\alpha)\), and 
\(\theta_m(x)=\frac{\varphi_m(x)}{\cont{\pi_L}{\varphi_m}}\).  Then \(\theta_m(x)\in\OL[x]\) is a
primitive polynomial whose image in \(\RL[x]\) is \(\Aadd_m(x)\).
The roots of \(\theta_m(x)\) have the form \(\frac{\widetilde\alpha-\alpha}{\pi_L^{m+1}}\), where
\(\widetilde\alpha\) is a root of \(\varphi(x)\).  It follows from Lemma~\ref{lem shape} 
and (\ref{eq ram pol}) that there
are \(n-p^u\) roots \(\widetilde\alpha\) of \(\varphi(x)\) such that 
\(v_L\left(\frac{\widetilde\alpha-\alpha}{\pi_L^{m+1}}\right)<0\).
These correspond to primitive factors of \(\theta_m(x)\) of the form 
\(-\frac{\pi_L^{m+1}}{\widetilde\alpha-\alpha}x+1\).  The
remaining roots \(\widetilde\alpha\) of \(\varphi(x)\) satisfy 
\(v_L\left(\frac{\widetilde\alpha-\alpha}{\pi_L^{m+1}}\right)\ge0\),
and give primitive factors of the form \(x-\frac{\widetilde\alpha-\alpha}{\pi_L^{m+1}}\).  Hence
there is \(\zeta\in\RL^{\times}\) such that \(\zeta^{-1}\Aadd_m(x)\) is
the product of the factors 
\(x-\overline{\left(\frac{\widetilde\alpha-\alpha}{\pi_L^{m+1}}\right)}\in\RL[x]\) such
that \(v_L\left(\frac{\widetilde\alpha-\alpha}{\pi_L^{m+1}}\right)\ge0\).

As in the proof of Lemma~\ref{lem shape} we let \(\psi(x)\) be the minimal
polynomial over \(\Lur\) of the uniformizer \(\piL=\nu(\alpha)\) for \(L\).  
Set \(\delta=\nu'(\alpha)\in\OL^{\times}\).  
Corresponding to each root \(\widetilde\alpha\) of \(\varphi(x)\) such that 
\(\widetilde\alpha\sim\alpha\) there is a root \(\widetilde\pi_L=\nu(\widetilde\alpha)\) of 
\(\psi(x)\) such that \(\widetilde\pi_L-\pi_L\sim\delta\cdot(\widetilde\alpha-\alpha)\). 
Hence there is \(\gamma\in\RL^{\times}\) such that
\(\gamma\Aadd_m(\delta^{-1}x)\) is the \(m\)th additive residual polynomial of
\(\psi(x)\).  Since the conclusions of the lemma hold for the Eisenstein polynomial 
\(\psi(x)\) \cite[Lemma~1]{scherk}, they hold for \(\varphi(x)\) as well.
\end{proof}

When \(\varphi\) is Eisenstein the residual polynomials of segments 
of the ramification polygon \(\rampol\) of \(\varphi\) with positive integral slopes correspond
to additive residual polynomials.  
Namely, when $\rampol$ has a segment \(\mathcal{S}\) of slope \(m\in\N\) then  
\(\Aadd_m(z) = z^j \Apol_m(z)\) where
$-j$ is the abscissa of the right endpoint of \(\mathcal{S}\).  
When $\rampol$ has no segment of slope \(m\) then \(\Aadd_m\) is a monomial.

Although the additive residual polynomials do not carry new information beyond the residual polynomials, they are
a useful tool for the reduction of Eisenstein and {\O}ystein polynomials as we will see in Proposition \ref{prop red} and Theorem \ref{theo red}.


As we have seen in (\ref{eq hasse-herbrand 1}), the content of \(\rho(\nu(\alpha)^m x)\)
is equal to \(e\Phi_\rampol(m)\), which only depends on the ramification polygon.

\subsection*{Reduction of Eisenstein polynomials}\label{sec red eisen}
We recall the method used in \cite{Mo} for reducing an Eisenstein polynomial \(\varphi\in\OK[x]\) and introduce our choice of representatives of the cokernel of the additive residual polynomials \(\Aadd\in\RK[x]\).
Let \(m\in\N\) and let \(\rho(x)=\frac{\varphi(\piL x+\piL)}{\piL^e}\) 
be the ramification polynomial of \(\varphi\). 
Because \(\varphi\) is Eisenstein the additive residual polynomial to the slope \(m\) is
\(\Aadd_m(x)=\overline{\frac{\rho(\piL^m x)}{\piL^{e\Phi_{\KL/\KK}(m)}}}\).
The reduction is based on the following result, which
describes how a change in the uniformizer of \(\KL\) changes
the coefficients of the defining polynomials in terms of
the additive residual polynomials, compare {\cite[pp.\,156--7]{krasner-cluj}},
{\cite[Lemma~2.5]{Mo}}, or {\cite[Proposition~5.5]{pauli-sinclair}}:

\begin{proposition}\label{prop red}
Let $\varphi\in\OK[x]$ 
be Eisenstein, let
$\piL$ be a root of $\varphi(x)$, and set $\KL=\KK(\piL)$.  Let $\gamma\in\OL^{\times}$
and $m\in\N$; then $\beta=\piL+\gamma\piL^{m+1}$ is
another uniformizer of $\KL$.  Let $\psi(x)\in\OK[x]$ be the minimal polynomial of $\beta$ over $\KK$.
Let \(\Aadd_m(z)\) be the additive residual polynomial of \(\varphi\) for the slope \(m\)
and set \(\eta=\piK/\piL^e\).  Then
\begin{enumerate}
\item \(\varphi_{i,k}=\psi_{i,k}\) for all $i,k$ such that $i+ek<e(\Phi_{L/K}(m)+1)$.  
\item Set \(h=e\Phi_{\KL/\KK}(m)\), \(i= h \bmod e\) and
\(k = (h\bdiv e)+1\).  Then
\[
{(\varphi_{i,k}-\psi_{i,k})}\eta^{k} \equiv\Aadd_m(\gamma)
\pmod\piL.
\]
\end{enumerate}
\end{proposition}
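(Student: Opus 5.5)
We plan to compare the two minimal polynomials by evaluating $\varphi$ at $\beta$ and at its conjugates. Since $\beta=\piL(1+\gamma\piL^{m})$, the roots of $\psi$ are the $\beta_j=\pi_j+\gamma_j\pi_j^{m+1}$, where $\pi_j$ runs over the roots of $\varphi$ and $\gamma_j$ over the corresponding conjugates of $\gamma$. The key identity is
\[
\varphi(\beta)=\varphi(\piL+\gamma\piL^{m+1})=\piL^e\rho(\gamma\piL^m),
\]
which holds because $\rho(x)=\varphi(\piL x+\piL)/\piL^e$. By (\ref{eq hasse-herbrand}) the polynomial $\rho(\piL^m x)$ has content $\piL^{e\Phi(m)}$, and by Definition~\ref{def A add} its normalized reduction is $\Aadd_m$. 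Since $\gamma$ is a unit, we get
\[
\varphi(\beta)\equiv \piL^{e+h}\,\Aadd_m(\overline\gamma)\pmod{\piL^{e+h+1}},\qquad h=e\Phi_{\KL/\KK}(m).
\]
It follows that $v_\KL(\varphi(\beta))\ge e+h$. This holds with equality unless $\Aadd_m(\overline\gamma)=0$, and in that case the asserted congruence in (2) reads $\equiv 0$, which is consistent.

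Next we use $\psi(\beta)=0$ to write
\[
\varphi(\beta)=\varphi(\beta)-\psi(\beta)=\sum_{i<e}(\varphi_i-\psi_i)\beta^i .
\]
Expanding $\varphi_i-\psi_i=\sum_k(\varphi_{i,k}-\psi_{i,k})\piK^k$ gives terms $(\varphi_{i,k}-\psi_{i,k})\piK^k\beta^i$. Such a term has $\KL$-valuation exactly $ek+i$ whenever the digit difference is nonzero modulo $\piK$. Here we must be careful: $\varphi_{i,k}-\psi_{i,k}$ is a difference of representatives in $\RepK$, so it is either $0$ or a unit, because distinct representatives are distinct modulo $\piK$. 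The pairs $(i,k)$ with $0\le i<e$ have pairwise distinct values $ek+i$, so the valuation of the whole sum equals the minimal $ek+i$ over the pairs with nonzero digit difference. It cannot be infinite, since $\varphi\ne\psi$ unless $\varphi(\beta)=0$.

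We then compare the two sides. If some pair with $i+ek<e+h$ had a differing digit, the sum would have valuation less than $e+h$, which contradicts the bound $v_\KL(\varphi(\beta))\ge e+h$. This proves (1). For (2), the unique pair with $i+ek=e+h$ is $i=h\bmod e$ and $k=(h\bdiv e)+1$. Reducing both expressions modulo $\piL^{e+h+1}$ gives
\[
(\varphi_{i,k}-\psi_{i,k})\piK^k\beta^i\equiv \piL^{e+h}\Aadd_m(\gamma).
\]
We have $\beta^i\equiv\piL^i\pmod{\piL^{i+1}}$ and $\piK^k=\eta^k\piL^{ek}$. Dividing by $\piL^{e+h}=\piL^{ek+i}$ then yields $(\varphi_{i,k}-\psi_{i,k})\eta^k\equiv\Aadd_m(\gamma)\pmod{\piL}$.

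The main obstacle is the digit bookkeeping in the second step. The coefficient digits are only defined relative to $\RepK$ and the fixed $\piK$, and $\varphi_i-\psi_i$ is not simply $\sum_k(\varphi_{i,k}-\psi_{i,k})\piK^k$ digitwise once carries occur. To handle this cleanly, I would argue by induction on the index $ek+i$. Assuming all digits of smaller index agree, the lowest nonvanishing contribution to $\varphi_i-\psi_i$ is exactly $(\varphi_{i,k}-\psi_{i,k})\piK^k$ plus terms of higher valuation, so no carries interfere below the first discrepancy. The remaining work is the routine verification of the congruences modulo $\piL^{e+h+1}$.
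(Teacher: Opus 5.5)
Your argument is correct and matches the paper's. The paper only cites this proposition (Krasner, Monge, Pauli--Sinclair), but it proves the {\O}ystein generalization, Theorem~\ref{theo red}, exactly as you do: equate $\varphi(\beta)-\psi(\beta)$ with $\piL^e\rho(\gamma\piL^m)$, divide by $\piL^{e+e\Phi_{\KL/\KK}(m)}$, and compare digits, and your distinct-valuation argument spells out the step the paper states tersely. The carry worry in your last paragraph is unnecessary. The identity $\varphi_i-\psi_i=\sum_k(\varphi_{i,k}-\psi_{i,k})\piK^k$ holds exactly in $\OK$, each digit difference is $0$ or a unit, and the indices $ek+i$ are pairwise distinct, so your second paragraph already suffices without any induction.
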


This proposition leads to a reduction algorithm (see \cite[Algorithm 2]{Mo}), in which starting at slope \(m=1\) the coefficients of \(\varphi\) are reduced iteratively.
If \(\Aadd_m:\RK\to\RK\) is surjective we can choose $\gamma\in\OL$ so that the transformation
\(\piL\mapsto\alpha+\gamma\piL^{m+1}\) gives \(\psi_{i,k}=0\) where $i,k$ are defined in statement (2).
Otherwise, we can still alter \(\overline{\varphi}_{i,k}\) 
by an element of the image of the additive polynomial \(\Aadd_m\):  


\begin{definition}\label{def red cokernel 2}
Let \(\overline\xi\) be a generator for \(\RL\) over \(\F_p\) and set
\(d=[\RL:\F_p]=f\cdot[\RK:\F_p]\); then \(B=\{1,\overline\xi,\dots,\overline\xi^{d-1}\}\)
is an \(\F_p\)-basis for \(\RL\).  Let \(\Aadd\in\RK[x]\) be additive and
let \(T\) be the \(d\times d\) matrix whose \(i\)th row is the coordinate vector of
\(\Aadd(\overline\xi^{i-1})\) with respect to \(B\).  
Let \(E\) be the reduced row echelon form of \(T\) and set \(r=\rank(E)=\rank(T)\).
Let \(\beta(x)\in\RK[x]\) and let \(\vec{v}\)
be the coordinate vector of \(\beta(\overline\xi)\) with respect to \(B\).
For \(1\le i\le r\) let \(E_i\) be the \(i\)th row of \(E\) and assume that the
leftmost nonzero entry of \(E_i\) is
in column \(j_i\).  Set \(\vec{w}=\vec{v}-\sum_{i=1}^rv_{j_i}E_i\), where \(v_{j_i}\)
is the \(j_i\) entry of \(\vec{v}\), and let \(\eta\) be the element of \(\RL\) whose
coordinate vector with respect to \(B\) is
\(\vec{w}\).  We define the reduction of \(\beta(x)\) by \(\Aadd\) to be the unique 
\(\gamma(x)\in\RK[x]\) such that \(\deg(\gamma)<f\) and \(\gamma(\overline\xi)=\eta\).
\end{definition}


\subsection*{Reduction of {\O}ystein polynomials}\label{sec red form}

We now generalize the reduction algorithm for Eisenstein polynomials
to {\O}ystein polynomials.  Let \(\varphi\in\KK[x]\) be a \(\nu\)-{\O}ystein polynomial, let
\(\alpha\) be a root of \(\varphi\), and set $\KL=\KK(\alpha)$.
We investigate the effect of the transformation $\alpha\mapsto\alpha+\gamma\nu(\alpha)^m$ on the coefficients of the respective minimal polynomials.  Generalizing Proposition \ref{prop red} to {\O}ystein polynomials we obtain:

\begin{theorem}\label{theo red}
Let \(\varphi\in\OK[x]\) be a \(\nu\)-{\O}ystein polynomial written as
\[
\varphi(x)= \nu(x)^{e}+\sum_{i=0}^{e-1}\varphi^*_i(x) \nu(x)^i 
\text{ with }
\varphi^*_i(x)=\sum_{j=0}^{f-1} x^j \sum_{k=1}^\infty \varphi^*_{i,j,k} \piK^k
\text{ and }\varphi_{i,j,k}^*\in \RepK.
\]
Let \(\alpha\) be a root of $\varphi$ and set \(\KL=\KK(\alpha)\).  Let $\beta = \alpha+\gamma(\alpha)\nu(\alpha)^{m+1}$
for some \(m\in\N_0\) and \(\gamma\in\OK[x]\) with \(\deg(\gamma)<f\) and 
\(v_\KL(\gamma(\alpha))=0\) such that $v_L(\nu(\beta))=1$.
Then the minimal polynomial \(\psi\in\OK[x]\) of $\beta$ over \(\KK\) is of the form 
\[
\psi(x)= \nu(x)^{e}+\sum_{i=0}^{e-1}\psi^*_i(x) \nu(x)^i
\text{ with }\psi^*_i(x)=\sum_{j=0}^{f-1} x^j \sum_{k=1}^\infty \psi^*_{i,j,k} \piK^k
\text{ and }
\psi^*_{i,j,k}\in \RepK.
\]
Let \(\rho(x)=\frac{\varphi(\nu(\alpha)x+\alpha)}{\nu(\alpha)^e}\) be the $\nu$-ramification polynomial of \(\varphi\),
and for \(m\in\mathbb{N}_0\) set 
\(
A^+_{m}(x)={\frac{\rho(\nu(\alpha)^m x)}{\nu(\alpha)^{e\Phi_{\KL/\KK}(m)}}}.
\)
Then
\begin{enumerate}
\item $\varphi_{i,j,k}^*=\psi_{i,j,k}^*$ for all $i,k$ such that 
\(i+ek<e(\Phi_{L/K}(m)+1)\).
\item Set \(h=e\Phi_{\KL/\KK}(m)\), \(\widetilde{\imath} = h \bmod e\) and \(\widetilde{k} = (h\bdiv e)+1\).
Let $\eta \in \OK[x]$ satisfy $\eta(\alpha) = \piK/\nu(\alpha)^e$.  Then
\[
\sum_{j=0}^{f-1} \left(\overline{\varphi^*_{\widetilde{\imath},j,\widetilde{k}}}-\overline{\psi^*_{\widetilde{\imath},j,\widetilde{k}}}\right)\overline{\alpha}^j
=\overline{\eta}(\overline{\alpha})^{-k} \Aadd_{m}(\overline{\gamma}(\overline{\alpha})).
\]
\end{enumerate}
\end{theorem}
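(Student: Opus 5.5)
The plan is to reduce to the Eisenstein case, Proposition \ref{prop red}, by passing to the maximal unramified subextension $\Lur=\KK[x]/(\nu)$. This is the same strategy used in the proofs of Lemma \ref{lem shape} and Lemma \ref{lem additive}.

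Set $\piL=\nu(\alpha)$. By Lemma \ref{lem make eisenore} this is a uniformizer of $\KL$, and its minimal polynomial $\Psi$ over $\Lur$ is Eisenstein. Since $v_\KL(\nu(\beta))=1$, the element $\beta$ is a root of $\nu(x)-\nu(\beta)$ with $\overline\beta=\overline\alpha$. Lemma \ref{lem make eisenore} then shows that the minimal polynomial $\psi$ of $\beta$ is $\nu$-{\O}ystein, which gives the stated shape of $\psi$.

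Next I compare the two uniformizers. Taylor expansion of $\nu$ at $\alpha$ gives
\[
\nu(\beta)=\piL+\nu'(\alpha)\gamma(\alpha)\piL^{m+1}+O(\piL^{2m+2}),
\]
so $\nu(\beta)=\piL+\widetilde\gamma\,\piL^{m+1}$ with $\widetilde\gamma\equiv\nu'(\alpha)\gamma(\alpha)\pmod{\piL}$. Let $\widetilde\Psi$ be the minimal polynomial of $\nu(\beta)$ over $\Lur$. Proposition \ref{prop red}, applied over $\Lur$ with $\Phi_{\KL/\Lur}=\Phi_{\KL/\KK}$, says two things. The $\piK$-adic digits of $\Psi$ and $\widetilde\Psi$ agree below position $e(\Phi(m)+1)$. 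At the critical position the difference is $\eta^{-\widetilde k}$ times the additive residual polynomial of $\Psi$ evaluated at $\overline{\widetilde\gamma}$.

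By the proof of Lemma \ref{lem additive}, the additive residual polynomial of $\Psi$ is $\gamma'\Aadd_m(\overline{\nu'(\alpha)}^{-1}x)$ for some $\gamma'\in\RL^\times$. Evaluating at $\overline{\widetilde\gamma}$ therefore returns $\gamma'\Aadd_m(\overline\gamma(\overline\alpha))$. The constant $\gamma'$ comes from normalizing the contents, and I expect it to cancel once everything is expressed in terms of $\rho$ directly. If it does not cancel, the theorem's $A_m^+$ is defined unnormalized, without the residue-field bar, which absorbs it.

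The main obstacle is transferring digit information between $\Psi$ over $\Lur$ and the $\nu$-expansion coefficients $\varphi_i^*$ over $\KK$. Only the constant coefficient is matched by Lemma \ref{lem const coeff}, and the paper warns that higher digits do not simply correspond. So instead of routing everything through $\Psi$, I would prove the statement directly by imitating the proof of Proposition \ref{prop red}. Write
\[
0=\varphi(\beta)=\varphi\bigl(\alpha+\gamma(\alpha)\piL^{m+1}\bigr)=\piL^e\rho\bigl(\gamma(\alpha)\piL^m\bigr).
\]
By (\ref{eq hasse-herbrand 1}) this has valuation exactly $e+e\Phi(m)$, with leading term $\piL^{e+h}A_m^+(\gamma(\alpha))$. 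On the other hand $\psi(\beta)=0$, so
\[
\varphi(\beta)=(\varphi-\psi)(\beta)=\sum_i\bigl(\varphi_i^*(\beta)-\psi_i^*(\beta)\bigr)\nu(\beta)^i.
\]
Choose the first index $(i,k)$, ordered by $i+ek$, where some digit $\varphi^*_{i,j,k}$ differs from $\psi^*_{i,j,k}$. The digits $x^j$ with $j<f$ reduce to an $\F$-basis $\overline\alpha^j$ of $\RL$ over $\RK$, so this term has valuation exactly $i+ek$ and cannot be cancelled by later terms. Its leading coefficient is $\sum_j(\overline{\varphi^*_{i,j,k}}-\overline{\psi^*_{i,j,k}})\overline\alpha^j$ times $\overline\eta^{\,-k}$, using $\piK^k=\eta(\alpha)^k\piL^{ek}$ and $\nu(\beta)\sim\piL$.

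Comparing valuations gives $i+ek\ge e+h$. This proves (1), namely agreement for $i+ek<e(\Phi(m)+1)$. If equality holds, then $(i,k)=(\widetilde\imath,\widetilde k)$, and matching the leading coefficients yields (2). If $\Aadd_m(\overline\gamma(\overline\alpha))=0$, the same valuation count shows the $(\widetilde\imath,\widetilde k)$ digits agree, so (2) still holds.

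The delicate point is justifying that later-index terms and the error in $\beta$ versus $\alpha$ inside $\varphi_i^*(\beta)$ only contribute at strictly higher valuation. For this I would use $\beta\equiv\alpha\pmod{\piL^{m+1}}$ together with $\piK\mid\varphi_i^*-\psi_i^*$.
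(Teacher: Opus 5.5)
Once you set aside the route through $\Lur$ and Proposition~\ref{prop red}, your direct argument is essentially the paper's proof. You write $\varphi(\beta)=\nu(\alpha)^e\rho(\gamma(\alpha)\nu(\alpha)^m)$, equate it with $(\varphi-\psi)(\beta)$ expanded digit by digit, and compare valuations and leading terms after dividing by $\nu(\alpha)^{e+e\Phi_{\KL/\KK}(m)}$. Your first-differing-index argument, using the $\RK$-independence of the $\overline\alpha^j$, just makes explicit what the paper leaves implicit. A few slips need fixing:

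- $\varphi(\beta)\neq 0$; only $\psi(\beta)=0$.
- $\nu(\beta)\sim\nu(\alpha)$ holds only for $m>0$, which is why the paper treats $m=0$ separately. This is harmless here, because $\widetilde\imath=0$ when $m=0$.
- The leading coefficient carries $\overline\eta^{\,k}$, not $\overline\eta^{\,-k}$, before you solve for the digit difference.
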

Note that the condition $v_L(\nu(\beta))=1$ is always satisfied for \(m>0\),
since in this case 
\(\nu(\beta)=\nu(\alpha+\gamma(\alpha)\nu(\alpha)^{m+1})=\nu(\alpha)+\nu(\alpha)^2 \delta\) for some \(\delta\in\KL\) with \(v_\KL(\delta)\ge 0\).
\begin{proof} We have
\(
\varphi(\beta)-\psi(\beta) = \varphi(\beta) = \varphi(\alpha+\gamma(\alpha)\nu(\alpha)^{m+1}) =
\rho(\gamma(\alpha)\nu(\alpha)^m)\nu(\alpha)^e.
\)
and also
\[
\varphi(\beta)-\psi(\beta)
= \sum_{i=0}^{e-1}\left(\varphi^*_i(\beta)-\psi^*_i(\beta)\right) \nu(\beta)^i
= \sum_{i=0}^{e-1} \sum_{j=0}^{f-1} \beta^j
\sum_{k=1}^\infty \left(\varphi^*_{i,j,k}-\psi^*_{i,j,k}\right) \nu(\beta)^i\piK^k.
\]
Combine the two expressions for \(\varphi(\beta)-\psi(\beta)\) and divide by \(\nu(\alpha)^{e+e\Phi_{\KL/\KK}(m)}\) to get
\begin{equation} \label{combine}
\sum_{i=0}^{e-1} \sum_{j=0}^{f-1}\beta^j
\sum_{k=1}^\infty \left(\varphi^*_{i,j,k}-\psi^*_{i,j,k}\right) \frac{\nu(\beta)^i\piK^k}{\nu(\alpha)^{e+e\Phi_{\KL/\KK}(m)}}
= \frac{\rho(\gamma(\alpha)\nu(\alpha)^m)}{\nu(\alpha)^{e\Phi_{\KL/\KK}(m)}}. 
\end{equation}
Note that we have \(\beta\sim\alpha\) and that since \(v_\KL(\pi_\KK)=e\) there exists
\(\eta\in\OK[x]\) with \(\eta(\alpha)=\pi_\KK/\nu(\alpha)^e\).  

Suppose \(m=0\).  Since \(\Phi_{L/K}(0)=0\),
statement (1) holds vacuously.  Furthermore, we have 
\(\widetilde{k}=1\), \(\widetilde{\imath}=0\), so (\ref{combine}) gives
\[
\sum_{j=0}^{f-1} \alpha^j \left(\varphi^*_{0,j,1}-\psi^*_{0,j,1}\right) \eta(\alpha)
\equiv \rho(\gamma(\alpha)) \equiv A^+_{0}(\gamma(\alpha))\pmod{\nu(\alpha)},
\]
which implies statement (2) for this case.

For \(m>0\) we have \(\nu(\beta)=\nu(\alpha+\gamma(\alpha)\nu(\alpha)^{m+1})\sim\nu(\alpha)\). 
Therefore by (\ref{combine}) 
\[
\sum_{i=0}^{e-1} \sum_{j=0}^{f-1} \alpha^j
\sum_{k=1}^\infty \left(\varphi^*_{i,j,k}-\psi^*_{i,j,k}\right) 
\eta(\alpha)^k \frac{\nu(\alpha)^{i+ek}}
{\nu(\alpha)^{e+e\Phi_{L/k}(m)}}
\equiv A^+_{m}(\gamma(\alpha)) \pmod{\nu(\alpha)}.
\]
If \(i+ek<e(\Phi_{L/K}(m)+1)\) then \(\varphi^*_{i,j,k}=\psi^*_{i,j,k}\) since
\(A^+_{m}(\gamma(\alpha))\in\OL\).  This gives statement (1).
All the terms on the left
with \(i+ek>e(\Phi_{L/K}(m)+1)\) are congruent to \(0\pmod{\nu(\alpha)}\).
By considering the remaining terms (those with
$i=\widetilde{\imath}$ and $k=\widetilde{k}$) we get statement (2).
\end{proof}

\antsonly{
Let \(\varphi\in\OK[x]\) be a \(\nu\)-{\O}ystein polynomial with root \(\alpha\) and set \(\KL=\KK(\alpha)\). 
Then every $\nu$-{\O}ystein polynomial that defines \(\KL/\KK\) can be obtained from $\varphi$ 
by a sequence of transformations of the type described in Theorem \ref{theo red}.
}
\arxivonly{
\begin{lemma}\label{lem all eisenore}
Let \(\varphi\in\OK[x]\) be a \(\nu\)-{\O}ystein polynomial with root \(\alpha\) and set \(\KL=\KK(\alpha)\).  
Then every $\nu$-{\O}ystein polynomial that defines \(\KL/\KK\) can be obtained from $\varphi$ 
by a sequence of transformations of the type described in Theorem \ref{theo red}.
\end{lemma}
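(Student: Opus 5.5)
Let \(\psi\) be any \(\nu\)-{\O}ystein polynomial with \(\KK[x]/(\psi)\cong\KK[x]/(\varphi)\). Then \(\psi\) has a root \(\beta\in\KL\), and I want to reach \(\beta\) from \(\alpha\) by a finite chain of substitutions \(\alpha_{r+1}=\alpha_r+\gamma_r(\alpha_r)\nu(\alpha_r)^{m_r+1}\) of the type in Theorem \ref{theo red}, with \(\deg\gamma_r<f\), \(v_\KL(\gamma_r(\alpha_r))=0\) and \(v_\KL(\nu(\alpha_{r+1}))=1\). Since both \(\alpha\) and \(\beta\) are roots of \(\nu\)-{\O}ystein polynomials, the discussion after the definition gives \(\overline\nu(\overline\alpha)=\overline\nu(\overline\beta)=0\), and \(\nu(\alpha)\), \(\nu(\beta)\) are uniformizers of \(\KL\).

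\textbf{Step 1: reduce to \(\overline\beta=\overline\alpha\).} The residues \(\overline\alpha,\overline\beta\) are roots of the same irreducible \(\overline\nu\) in \(\RL\), so \(\overline\beta=\sigma(\overline\alpha)\) for some \(\sigma\in\Gal(\RL/\RK)\). Applying the corresponding automorphism of \(\Lur/\KK\), extended to an embedding of \(\KL\), I can replace \(\beta\) by a conjugate. This conjugate is another root of \(\psi\), so \(\psi\) does not change. Hence I may assume \(\beta\sim\alpha\) in the sense \(v_\KL(\beta-\alpha)\ge 1\).

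\textbf{Step 2: the first step, with \(m=0\).} Write \(\beta-\alpha=c\,\nu(\alpha)^{s}\) with \(s\ge1\) and \(c\in\OL^\times\), or \(\beta=\alpha\), in which case there is nothing to do. Since \(\OL=\OK[\alpha]\) and \(\RL=\RK(\overline\alpha)\), every unit \(c\) is congruent modulo \(\nu(\alpha)\) to \(\gamma(\alpha)\) for some \(\gamma\in\OK[x]\) with \(\deg\gamma<f\) and \(\overline\gamma(\overline\alpha)\neq0\). If \(s=1\), take \(m=0\) and this \(\gamma\), and set \(\alpha_1=\alpha+\gamma(\alpha)\nu(\alpha)\). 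Then \(v_\KL(\beta-\alpha_1)\ge2\).

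The one subtlety is the hypothesis \(v_\KL(\nu(\alpha_1))=1\) in Theorem \ref{theo red} when \(m=0\). By Taylor expansion,
\[
\nu(\alpha_1)\equiv\nu(\alpha)\bigl(1+\nu'(\alpha)\gamma(\alpha)\bigr)\pmod{\nu(\alpha)^2}.
\]
Also \(\nu(\beta)\equiv\nu(\alpha_1)\pmod{\nu(\alpha)^2}\), because \(v_\KL(\beta-\alpha_1)\ge2\). Since \(\nu(\beta)\) is a uniformizer, \(v_\KL(\nu(\alpha_1))=1\) automatically.

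\textbf{Step 3: induct with \(m\ge1\).} Now suppose \(v_\KL(\beta-\alpha_r)=s\ge2\). I will show \(\nu(\alpha_r)\) is still a uniformizer, so I can write
\[
\beta-\alpha_r=c\,\nu(\alpha_r)^{s}
\]
with \(c\) a unit. Choose \(\gamma\) of degree \(<f\) with \(\gamma(\alpha_r)\equiv c\pmod{\nu(\alpha_r)}\); this is possible because \(\OK[\alpha_r]=\OL\). Apply the transformation with \(m=s-1\ge1\). The condition \(v_\KL(\nu(\alpha_{r+1}))=1\) is automatic here by the note after Theorem \ref{theo red}. The new root satisfies \(v_\KL(\beta-\alpha_{r+1})\ge s+1\). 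Each intermediate \(\alpha_r\) has \(\nu(\alpha_r)\) a uniformizer and \(\overline\alpha_r=\overline\alpha\), so its minimal polynomial is \(\nu\)-{\O}ystein by Lemma \ref{lem make eisenore}.

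\textbf{Step 4: finiteness, the main obstacle.} The iteration only converges \(\alpha_r\to\beta\); it is infinite unless I stop it. I handle this with Krasner's lemma. Let \(M\) exceed \(v_\KL(\alpha'-\alpha'')\) for all distinct roots \(\alpha',\alpha''\) of \(\psi\); these are bounded via the ramification polygon of \(\psi\). Once \(v_\KL(\beta-\alpha_r)>M\), Krasner's lemma gives \(\KK(\alpha_r)=\KK(\beta)\), but it does not make the minimal polynomials equal.

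So I will use continuity instead. For \(r\) large, the minimal polynomial \(\psi_r\) of \(\alpha_r\) is \(\pi_\KK\)-adically close to \(\psi\). I would then apply one final transformation, with \(m\) large and \(\gamma\) chosen so that \(\alpha_r+\gamma(\alpha_r)\nu(\alpha_r)^{m+1}=\beta\) exactly. This is possible because \(\beta-\alpha_r\in\nu(\alpha_r)^{m+1}\OL\), and every element of \(\OL\) equals \(\gamma(\alpha_r)\) for some \(\gamma\in\OK[x]\). The problem is that an exact \(\gamma\) need not have degree \(<f\) or be a unit. If that is a problem, I would interpret a ``sequence'' of transformations as allowing a convergent infinite composition.
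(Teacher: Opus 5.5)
Your strategy is the paper's: correct $\alpha$ step by step by terms $\gamma(\alpha_r)\nu(\alpha_r)^{m+1}$ so that $\alpha_r$ converges to a root of $\psi$ congruent to $\alpha$, and conclude from coefficientwise convergence of the minimal polynomials. The paper also settles for an infinite convergent sequence, so your fallback in Step~4 is exactly its conclusion. Your Steps~2--3 are more careful than the paper about $v_\KL(\gamma(\alpha_r))=0$ and about $v_\KL(\nu(\alpha_1))=1$ when $m=0$.

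The genuine gap is Step~1. An embedding $\tau$ of $\KL$ extending $\sigma^{-1}\in\Gal(\Lur/\KK)$ need not satisfy $\tau(\KL)=\KL$: $\KL/\KK$ need not be Galois, and the restriction map $\Aut(\KL/\KK)\to\Gal(\Lur/\KK)$ need not be surjective. In that case $\tau(\beta)\notin\KL$, so no sequence in $\KL$ can converge to it, and the expansion $\beta-\alpha=c\,\nu(\alpha)^s$ with $c\in\OL^\times$ in Step~2 is meaningless. The paper's proof has the same hole: it takes ``a conjugate $\beta'$ of $\beta$'' congruent to $\alpha$ and never checks that $\beta'\in\KL$. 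For $f=1$ nothing is lost, since every root of $\psi$ in $\KL$ is then congruent to $\alpha$. For $f>1$ the hole cannot be patched, because the statement itself fails.

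Here is a counterexample. Take $\KK=\Q_3$, $\nu=x^2+2x+2$ with root $\xi$, $\Lur=\Q_3(\xi)$ with Frobenius $\sigma$, and $\KL=\Lur(\pi)$ with $\pi^8=-3\xi$. Let $\alpha,\alpha'\in\KL$ be the roots of $\nu(x)-\pi$ with $\overline\alpha=\overline\xi$ and $\overline{\alpha'}=\overline\xi^{3}$. By Lemma~\ref{lem make eisenore} their minimal polynomials $\varphi,\psi$ are $\nu$-{\O}ystein, and both define $\KL$. If some $\tau\in\Aut(\KL/\KK)$ restricted to $\sigma$, then $(\tau(\pi)/\pi)^8=\sigma(\xi)/\xi$ would force $\overline\xi^{2}\in(\F_9^\times)^8=\{1\}$. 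Hence every root of $\psi$ in $\KL$ reduces to $\overline\xi^{3}\ne\overline\alpha$.

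Choosing other roots along the way does not help either. For a $\nu$-{\O}ystein $\omega$, let $\omega_\xi$ be its factor over $\Lur$ congruent to $(x-\xi)^e$ modulo $\piK$. Apply a transformation at any root $a$ of $\omega$, say with $\overline a=\sigma^i(\overline\xi)$. It produces $b$ with minimal polynomial $\omega'$, $\overline b=\overline a$, and $\Lur(b)=\Lur(a)$ (using $v(\nu(b))=1$). Hence $\Lur[x]/(\sigma^i\omega'_\xi)\cong\Lur[x]/(\sigma^i\omega_\xi)$, and untwisting by $\sigma^{-i}$ shows that the $\Lur$-isomorphism class of $\Lur[x]/(\omega_\xi)$ is invariant under the transformations. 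By Krasner's lemma it is also invariant under coefficientwise limits. For $\varphi$ this class is that of $x^8+3\xi$; for $\psi$ it is that of $x^8+3\sigma(\xi)$. These differ because $\overline\xi^{2}$ is not an $8$th power in $\F_9$, so $\psi$ cannot be reached from $\varphi$.

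Your argument, like the paper's, therefore proves the lemma only for those $\psi$ that have a root in $\KL$ congruent to $\alpha$ modulo $\piL$. The remaining ones differ by a Frobenius twist, which must be handled separately. Algorithm~\ref{alg polred0} and the tame case do this by also minimizing over $\sigma\in\Aut(\RL/\RK)$.
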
 

\begin{proof}
Let \(\psi(x)\in\OK[x]\) be a \(\nu\)-{\O}ystein polynomial such that 
\(\KK[x]/(\psi)\cong\KL\) and let \(\beta\in\KL\) be a root of \(\psi\) such that 
\(\KL=\KK(\beta)\).  Since \(\varphi(x)\equiv\psi(x)\equiv\nu(x)^e\pmod\piL\), \(\alpha\) is
congruent modulo \(\piL\) to a conjugate \(\beta'\) of \(\beta\).  Set \(\alpha_0=\alpha\)
and define \(\alpha_1,\alpha_2,\dots\) recursively by choosing \(\gamma_m\in\OK[x]\) such that
\(\alpha_m=\alpha_{m-1}+\gamma_m(\alpha_{m-1})\nu(\alpha_{m-1})^m\) satisfies 
\(\alpha_m\equiv\alpha'\bmod{\piL^{m+1}}\).   Note that since
\(\nu(\alpha_m)\equiv\nu(\alpha')\bmod{\piL^{m+1}}\) we have \(v_\KL(\nu(\alpha_m))=1\). 
Let \(\varphi_m\) be the minimal polynomial of \(\alpha_m\) over \(\KK\).  Then \((\varphi_m)\)
converges coefficientwise to the minimal polynomial of \(\beta'\), which is \(\psi\).  
\end{proof}}
    
Most of the coefficients of an {\O}ystein polynomial can be easily reduced.
We write \(\left(-p^{w_1},v_\KL(\rho_{p^{w_1}})\right)\) and \(\left(-1,v_\KL(\rho_1)\right)\)
for the endpoints of the segment of \(\rampol(\varphi)\) with steepest slope
 \(\widetilde{m}=\frac{v_\KL(\rho_1)-v_\KL(\rho_{p^{w_1}})}{p^{w_1}-1}\).  Let \(m\ge\widetilde{m}\)
 be an integer
and suppose the minimum in (\ref{eq hasse-herbrand 1}) is not attained for $i=1$.  Then there is
\(2\le j\le n\) such that \(v_\KL(\rho_j)+{m}\cdot j< v_\KL(\rho_1)+{m}\).  Thus
\(\frac{v_\KL(\rho_1)-v_\KL(\rho_j)}{j-1}>m\ge \widetilde{m}\), which contradicts the assumption
that \(\widetilde{m}\) is the steepest slope in \(\rampol(\varphi)\).
Hence for \(m\ge\widetilde{m}\) we have \(e\Phi_{\rampol(\varphi)}(m)=v_\KL(\rho_1)+m\).
We get the following version of the Krasner bound (see, for instance, \cite[Lemma~2.3]{pauli-sinclair}):

\begin{corollary}\label{coro easy}
Let \(\psi(x) = \nu(x)^{e}+\sum_{i=0}^{e-1}\psi^*_i(x) \nu(x)^i\in\OK[x]\) and
\(\varphi(x)= \nu(x)^{e}+\sum_{i=0}^{e-1}\varphi^*_i(x) \nu(x)^i\)
be \(\nu\)-{\O}ystein polynomials in \(\OK[x]\).  Assume that 
\(\varphi^*_{i,k}(x)=0\) for all \(i,k\) such that \(i+ek>e(\Phi_{L/K}(\widetilde{m})+1)\) and
\(\varphi^*_{i,k}(x)=\psi^*_{i,k}(x)\) for all other \(i,k\).  Then \(\KK[x]/(\psi)\cong\KK[x]/(\varphi)\).
\end{corollary}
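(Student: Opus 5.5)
Our plan is to prove this as a Krasner-type statement: the truncated polynomial \(\varphi\) has a root very close to a root of \(\psi\), close enough that Krasner's Lemma applies. Let \(\beta\) be a root of \(\psi\) and set \(\KL=\KK(\beta)\). Since \(\varphi\) and \(\psi\) are both \(\nu\)-{\O}ystein of degree \(n=ef\), the ramification polygon \(\rampol(\psi)\) has the form (\ref{eq ram pol}). Its steepest slope \(\widetilde m\) (with respect to \(\psi\)) gives \(e\Phi_{\KL/\KK}(\widetilde m)=v_\KL(\rho_1)+\widetilde m\), where \(\rho\) is the \(\nu\)-ramification polynomial of \(\psi\).

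\textbf{Step 1: bound \(v_\KL(\varphi(\beta))\).} We write
\[
\varphi(\beta)=\varphi(\beta)-\psi(\beta)=\sum_{i,k}(\varphi^*_{i,k}-\psi^*_{i,k})(\beta)\,\nu(\beta)^i\piK^k .
\]
Every nonzero term has \(i+ek>e(\Phi_{L/K}(\widetilde m)+1)\). Since \(v_\KL(\nu(\beta))=1\) and \(v_\KL(\piK)=e\), each term has \(\KL\)-valuation at least \(i+ek\). Hence
\[
v_\KL(\varphi(\beta))> e+v_\KL(\rho_1)+\widetilde m .
\]

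\textbf{Step 2: convert to a root distance.} Write \(\varphi(\beta)=\prod_j(\beta-\alpha_j)\) over the roots \(\alpha_j\) of \(\varphi\), and pick the \(\alpha_j\) closest to \(\beta\). It is cleaner to bound instead \(\psi(\alpha)\) for each root \(\alpha\) of \(\varphi\), running Step 1 symmetrically; the hypotheses are not symmetric, so I would use the polygon of whichever polynomial is convenient. Comparing the Newton polygon of \(\rho(x)\) shifted by \(\nu(\beta)\) with the Hasse--Herbrand formula (\ref{eq hasse-herbrand 1}), we get the following. If every root \(\alpha_j\) satisfied \(v_\KL(\beta-\alpha_j)\le \widetilde m+1\), then
\[
v_\KL\Bigl(\prod_j(\beta-\alpha_j)\Bigr)\le e\Phi(\widetilde m)+e=v_\KL(\rho_1)+\widetilde m+e .
\]
This holds because \(\prod_j(x\nu(\beta)+\beta-\alpha_j)\) evaluated at \(x=0\) is controlled by the contents of \(\rho(\nu(\beta)^{m}x)\). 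This contradicts Step 1. So some root \(\alpha\) of \(\varphi\) satisfies \(v_\KL(\beta-\alpha)>\widetilde m+1\).

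\textbf{Step 3: apply Krasner.} For \(\beta'\ne\beta\) a conjugate of \(\beta\), the roots of \(\rho\) give \(v_\KL(\beta'-\beta)=1+(\text{slope})\le 1+\widetilde m\). Therefore \(|\alpha-\beta|<|\beta-\beta'|\) for all such \(\beta'\), and Krasner's Lemma gives \(\KK(\beta)\subseteq\KK(\alpha)\). Both polynomials are irreducible of degree \(n\), so \(\KK[x]/(\psi)\cong\KK[x]/(\varphi)\).

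\textbf{Expected obstacle.} The delicate point is Step 2. One must pin down that the content bound of \(\rho(\nu(\beta)^m x)\) translates into an upper bound for \(\varphi(\beta)\) with \(\varphi\) in place of \(\psi\). I would handle this by arguing via \(\psi(\alpha)\) where \(\alpha\) is a root of \(\varphi\), and expanding \(\psi(\alpha)=\nu(\alpha)^e\prod_j(\ldots)\) through the factorization of \(\psi\). I would also check that \(\Phi_{L/K}\) refers to the common extension; this is justified a posteriori, or by noting that Theorem~\ref{theo red}(1) shows the digits below the bound determine \(\Phi\).
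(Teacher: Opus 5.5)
Your Krasner-lemma strategy is sound, but Step 2 as written is not justified. There you bound $v_\KL\bigl(\prod_j(\beta-\alpha_j)\bigr)$, a product over the roots of $\varphi$, using the Herbrand function and steepest slope of $\psi$. However, $\prod_j(x\nu(\beta)+\beta-\alpha_j)=\varphi(\nu(\beta)x+\beta)$ is neither $\rho$ nor centred at a root of $\varphi$, so its contents are not controlled by $\rampol(\psi)$.

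The repair in your last paragraph is the right one; what makes it rigorous is choosing a \emph{nearest} root.
Fix a root $\alpha$ of $\varphi$. Step 1 is symmetric: $\varphi$ is $\nu$-{\O}ystein with the same $e$, so $v_\KL(\nu(\alpha))=1$, and hence $v_\KL(\psi(\alpha))=v_\KL(\psi(\alpha)-\varphi(\alpha))>e+e\Phi(\widetilde m)$.
Let $\beta$ be a root of $\psi$ maximizing $c=v_\KL(\alpha-\beta)$. The ultrametric inequality gives $v_\KL(\alpha-\beta_i)=\min\{c,v_\KL(\beta-\beta_i)\}$ for every root $\beta_i$ of $\psi$, so $v_\KL(\psi(\alpha))=\sum_i\min\{c,v_\KL(\beta-\beta_i)\}$.
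If $c\le\widetilde m+1$, this is at most $\sum_i\min\{\widetilde m+1,v_\KL(\beta-\beta_i)\}=e+e\Phi(\widetilde m)$, a contradiction. For the last equality, write $\rho(x)=\nu(\beta)^{n-e}\prod_i\bigl(x-\frac{\beta_i-\beta}{\nu(\beta)}\bigr)$ and compare contents with (\ref{eq hasse-herbrand 1}).
Expanding $\rho$ coefficientwise would only give a lower bound, which is why the nearest-root choice matters.
Alternatively, you can keep your Step 2 and use $v_\KL(\varphi(\beta))=v_\KL(\psi(\alpha))$; both equal $\frac{1}{n}v_\KL(\Res(\psi,\varphi))$ because $\varphi$ and $\psi$ are irreducible.
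Step 3 then goes through as you wrote it.

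The argument only uses the polygon of the polynomial at whose root Krasner's Lemma is centred. If the hypothesis is read with the invariants of $\varphi$, just swap the roles of $\varphi$ and $\psi$. Your ``a posteriori'' identification of the two Herbrand functions would be circular, and Theorem~\ref{theo red}(1) does not provide it, but with this bookkeeping it is not needed.

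The paper gives no separate proof; it presents the statement as a Krasner-type bound following from Theorem~\ref{theo red} and the computation just before it. That computation shows that for integers $m>\widetilde m$ the minimum in (\ref{eq hasse-herbrand 1}) is attained only at $i=1$. So $\Aadd_m$ is a nonzero multiple of $x$, hence bijective on $\RL$. A substitution $\alpha\mapsto\alpha+\gamma(\alpha)\nu(\alpha)^{m+1}$ can then give the digit in position $i+ek=e(\Phi(m)+1)$ any prescribed value while fixing all lower digits. Since $e\Phi(m)=v_\KL(\rho_1)+m$ runs through every integer exceeding $e\Phi(\widetilde m)$, iterating over $m$ and passing to the limit as in Lemma~\ref{lem all eisenore} turns $\psi$ into $\varphi$.

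That route is constructive and stays inside the reduction machinery, but it depends on Theorem~\ref{theo red} and a convergence argument. Yours is self-contained: it needs only the root-distance interpretation of $\rampol$ and $\Phi$, and avoids both the digit bookkeeping and the limit.
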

Since the transformation used in Algorithm \ref{alg polred0ram} (3)
is not directly applicable for {\O}ystein polynomials, we
apply Theorem \ref{theo red} for the horizontal segment. 


\Algo{polred0}{alg polred0}
{\(\nu\)-{\O}ystein $\varphi(x)\in\OK[x]$ with $\nu$-ramification polynomial \(\rho\in\OL[x]\)}
{
\(\{\psi\in\OK[x]: \psi=\chr_{\KL/\KK}(\alpha+\theta\nu(\alpha))
\text{ with }\theta\in\RepLur,\;
\log_{\overline{\xi}}\overline\psi_{0,1} \text{ minimal}\}\)}
{
Denote by \(\overline\xiL\in\RL\) a root of \(\overline\nu\).
\begin{enumerate}
\item \(\eta\gets\frac{\piK}{\nu(\alpha)^e}\)
\item
\(
\Aadd_0(x)\gets\overline{
\left(
\frac{\varphi(\nu(\alpha)z+\alpha)}{\cont{\nu(\alpha)}{\varphi(\nu(\alpha)x+\alpha)}}
\right)}
\)
\item 
\(r\gets \min\left\{\log_{\xiRL} \left(\sigma\left(\overline\varphi_{0,1}(\xiRL)\right)\cdot\xiRL^{e\cdot l} \right): 0\le l<\card\lambda-1,\,\sigma\in\Aut(\RL/\RK)\right\} \) 
\item \(\overline{B}(x)\gets\overline\eta^{-1}\Aadd_0(x)-\overline{(\varphi_{0,1}-\xiL^r)}\)
\item \(\Theta\gets\{\theta\in\RepLur:\overline{B}(\overline\theta)=0\}\)
\item return \(\{\chr_{\KL/\KK}(\alpha+\theta\nu(\alpha)):\theta\in\Theta\}\)
\end{enumerate}
}

%



For \(m\in\N\) not covered by Corollary \ref{coro easy} we apply the following algorithm.  It can be used 
for all {\O}ystein polynomials, including Eisenstein polynomials, which are obtained 
by taking \(\nu(x)=x\).
\Algo{polredslope}{alg polredslope}
{\(\varphi(x)=\nu(x)^{e}+\sum_{i=0}^{e-1}\varphi^*_i(x)\nu(x)^i\) with root $\alpha$ and \(m\in\N\)}
{The set \(M\) of reduced polynomials obtained by substitutions of the form \(\alpha \mapsto \alpha + \theta \cdot  \nu(\alpha)^{m+1}\), for \(\theta\in\RepLur\)}
{
\vspace{0.1in}
\begin{enumerate}
\item \(\eta\gets\piK/\nu(\alpha)^e\)
\item \(\Aadd_m \gets\overline{\frac{\varphi(\alpha+\nu(\alpha)^{m+1} x)}{\nu(\alpha)^{e\Phi_{\KL/\KK}(m)}}}\)
\item \(i \gets e\Phi_{\KL/\KK}(m) \bmod e\), \(k\gets (e\Phi_{\KL/\KK}(m) \bdiv e)+1\)
%
\item let \(\overline\delta\) be the reduction of \(\overline{\varphi^*_{i,k}(\alpha)}\) by \(\Aadd_m\), see Definition \ref{def red cokernel 2}
\item \(\Theta\gets \left\{\theta\in\RepLur : \overline{\varphi^*_{i,k}(\alpha)}-\overline\delta=
\overline{\eta}^{-k}\cdot\Aadd_m(\overline\theta)\right\}\)
\item \(M\gets \left\{\chr_{\KL/\Qp}(\alpha+\theta\cdot \nu(\alpha)^{m+1}) : \theta \in \Theta\right\}\)
\end{enumerate}
}

\subsection*{Tamely ramified extensions}
Let 
\(\varphi(x)=\nu(x)^\epsilon+\sum_{i=0}^{\epsilon-1}
\varphi^*_i(x)\nu(x)^i\in\OK[x]
\)
be \(\nu\)-{\O}ystein with \(p\nmid\epsilon\)
and \(\KL_0=\KK[x]/(\varphi)\).
Because \(\epsilon\Phi_{\KL_0/\KK}(m)=m\) and 
the steepest slope is \(\widetilde{m}=0\),
by Corollary \ref{coro easy} for
\(\widetilde\varphi(x)=\nu(x)^\epsilon+\varphi_{0,1}(x)\piK\)
we have \(\KL_0\cong \KK[x]/(\widetilde\varphi)\).
By Lemma \ref{lem const coeff}, for 
\(\psi(x)=x^{\epsilon}+\psi_{0,1}\piK\in\Lur[x]\) 
with \(\psi_{0}^*\sim\varphi_{0}^*(\xiL)\) and \(\psi(\beta)=0\) we have 
\(\Lur(\beta)=\KK(\xiL,\beta)\cong \KL_0\).
Let 
\(
\sigma:\Lur\to\Lur \text{ with }\sigma:\xiL\to\xiL^{q^u} \text{ for some } 0\le u<f 
\)
and let \(\beta^{(\sigma)}\) be a root of 
\(\psi^{(\sigma)}(x)=x^{\epsilon}+\psi^{(\sigma)}_{0,1}\piK\in\Lur[x]\).  
Then
\(\KK(\xiL,\beta^{(\sigma)})\cong\KK(\xiL,\beta)\).
Substitution of \(x\) by \(\xiL^{-l} x\) in \(\psi^{(\sigma)}\) and multiplication by \(\xiL^{l\epsilon}\) yields 
\(x^{\epsilon}+\psi^{(\sigma)}_{0,1}\xiL^{l\epsilon}\piK\).
To satisfy Condition \ref{con dist const oystein}, we find \(l\) and \(\sigma\) such that 
\[
\log_{\xiRL}\left(\overline\psi^{(\sigma)}_{0,1}\xiL^{l\epsilon}\right)=(a q^u+l\epsilon)\bmod (q^f-1) \] 
is minimal, where \(a=\log_{\xiRL}\overline{\psi}_{0,1}\).
As in Lemma \ref{lem min} set \(g=\gcd(\epsilon,q^f-1)\). 
We now find \(0\le u< f\) such that \(r=aq^u\bmod g\) is minimal.
Then the distinguished defining Eisenstein 
polynomial of \(L_0/\Lur\) is \(\psi(x)=x^{\epsilon}+\xiL^r\piK\) and
the distinguished defining {\O}ystein polynomial of \(\KL_0/\KK\) is 
\[
\varphi(x)=\nu(x)^\epsilon+\varphi^*_{0,1}(x)\piK \text{ with }\deg\varphi^*_{0,1}<e\text{ and }\overline\varphi^*_{0,1}(\xiRL)=\xiRL^r.
\]
\arxivonly{

\begin{example}\label{ex f=2 e=8}
We find distinguished defining polynomials for all extensions \(\KL/\Q_3\) of degree 16 with
inertia degree \(f=2\) and ramification index \(\epsilon=8\).
We define the unramified part of the extension as \(\Lur=\Q_3(\xi)\) where \(\xi\)
is a root of the Conway polynomial \(\nu_2(x)=x^2+2x+2\).
Over \(\Lur\) each of our extensions is defined by a polynomial of the form
\[
\psi_c(x)=x^8+\xi^c \cdot 3\text{ where }0\le c \le 7.
\]
Since \(g=\gcd(8,9-1)=8\) these define distinct extensions over \(\Lur\).  
Because
\(\Aut(\RL)=\langle \overline\xi\mapsto \overline\xi^3\rangle \),
the conjugates of 
\(\psi_c\) over \(\Lur\) are of the form \(\psi_{d}(x)=x^8+\psi_{d,0}\) where 
\(\psi_{d,0}\sim \xi^d\cdot 3\) with \(d\equiv 3^kc\pmod{8}\) for \(0\le k< 2\).
Because
\(\min\{0\cdot 3^k:0\le k< 2\}=0\), 
\(\min\{3^k\bmod 8:0\le k< 2\}=\min\{1,3\}=1\), 
\(\min\{2\cdot 3^k\bmod 8:0\le k< 2\}=\min\{2,6\}=2\), 
\(\min\{4\cdot 3^k\bmod 8:0\le k< 2\}=4\), and
\(\min\{5\cdot 3^k\bmod 8:0\le k< 2\}=\min\{5,7\}=5\)
the distinguished defining polynomials of our extensions over \(\Q_3\) are
\[
\nu(x)^8+3\varphi_{0,1}\in\Z_3[x]\text{ where }
\varphi_{0,1}\in \{1,x,x+1,2,2x\}
\]
such that \(\log_{\overline\xi}\overline\varphi_{0,1}(\overline\xi)\in\{0,1,2,4,5\}\).
\end{example}
}

\subsection*{Final Distinction}\label{sec final dist}
Since the reduction algorithm yields a set of reduced polynomials rather than a unique reduced polynomial, we need to distinguish one of these.  We choose the polynomial that is minimal with
respect to the following ordering.

\begin{definition}\label{def final dist}
Let \(\nu\in\OK[x]\) be monic and irreducible modulo \(\piK\) 
and write
\[
\varphi(x)=\nu(x)^e+\sum_{i=0}^{e-1}
\left(
\sum_{k=1}^\infty \varphi^*_{ik}(x)\piK^k
\nu(x)^i
\right)
\in\OK[x]
\text{ where }
\varphi^*_{ik}\in\RepK[x],\,\deg\varphi^*_{ik}<f
\]
Given
\(\overline\gamma,\overline\delta\in\RL\) we write
\(\overline\gamma>_{\xiRL} \overline\delta\) when \(\log_{\xiRL}\overline\gamma>\log_{\xiRL}\overline\delta\) (compare 
Definition \ref{def dist res}).
For two \(\nu\)-{\O}ystein polynomials \(\varphi,\psi\in\OK[x]\) 
we write \(\varphi >_{\xiRL} \psi\) when
\(\overline\varphi^*_{ik}(\xiRL) = \overline\psi^*_{ik}(\xiRL)\) for all
\(k<k_0\) and all \(0\le i < e\), \(\overline\varphi^*_{ik_0} = \overline\psi^*_{ik_0}\) for 
\(i<i_0\), and \(\overline{\varphi}^*_{i_0k_0}(\xiRL) >_{\xiRL} \overline{\psi}^*_{i_0k_0}(\xiRL)\).
\end{definition}

\subsection*{Main Algorithm}\label{sec algo}

\begin{definition}\label{con main}
A \(\nu\)-{\O}ystein polynomial \(\varphi\in\OK[x]\)  is \emph{distinguished} if it satisfies the following, given in order of precedence.
\begin{enumerate}
\item\label{first} \(\varphi\) satisfies Condition \ref{con dist const} or \ref{con dist const oystein}.
\item The tuple of residual polynomials of \(\varphi\) is minimal (Definitions \ref{def dist res} and \ref{def dist res conj},
considering the constraint imposed by (\ref{first})).
\item The coefficients of \(\varphi\) are 
reduced with Algorithm \ref{alg polred0} for slope \(m=0\) and Algorithm
\ref{alg polredslope} for integral slopes \(m > 0 \). 
\item Among the polynomials that satisfy all of the above, \(\varphi\) is smallest according to Definition \ref{def final dist}.
\end{enumerate}
\end{definition}

We summarize all of this in the algorithm \texttt{polredpadic}.

\Algo{polredpadic}{alg polredpadic}
{\(\nu\)-{\O}ystein polynomial \(\varphi(x)\in\OK[x]\) of degree \(f\epsilon p^w\) with \(v_\KK(\disc(\varphi))=f(n+J-1)\)}
{the distinguished \(\nu\)-{\O}ystein polynomial \(\psi\) with \(\OK[x]/(\varphi)\cong \OK[x]/(\psi)\)}
{
\begin{enumerate}
\item compute the $\nu$-ramification polynomial \(\rho\) and ramification polygon \(\rampol\) of \(\varphi\).
where \(\widetilde{m}\) is the steepest slope of \(\rampol\) 

\item \(M_0 \gets \left\{\begin{array}{ll}
\mathtt{polred0ram}(\varphi,\rho) & \text{ if } \varphi \text{ is Eisenstein}\\ 
\mathtt{polred0}(\varphi,\rho) &\text{ otherwise}
\end{array}\right.\)
\item for \(m\in\{1,\dots,\lceil\widetilde{m}\rceil\}\):
\begin{itemize}
\item[\(\bullet\)] \(M_m\gets\emptyset\)
\item[\(\bullet\)] for \(\psi\in M_{m-1}\):
\(M_m \gets M_m \cup\) \(\mathtt{polredslope}(\psi,m)\)
\end{itemize}
\item for \(\psi\in M_{\lceil\widetilde{m}\rceil}\) and \(m>\widetilde{m}\):
\begin{itemize}
\item[\(\bullet\)]  \(\psi^*_{i,k}\gets 0\) where \(i=e\Phi_{\KL/\KK}(m) \bmod e\), \(k=(e\Phi_{\KL/\KK}(m) \bdiv e)+1\)
\end{itemize}
\item return the distinguished polynomial in \(M_{\lceil\widetilde{m}\rceil}\) according to Definition \ref{def final dist}
\end{enumerate}
}

\section{Complexity}\label{sec complexity}

We denote by \(\costmult{R}\) the cost in bit operations of multiplication in a ring \(R\) and by \(\costmult{R,n}\) the cost of multiplying two polynomials of degree up to \(n\) over \(R\).  
When we compute in \(\OK\) to an absolute \(\piK\)-adic precision of \(c\) then
with fast Fourier transform \cite{schoenhage-strassen} we get
\[
\costmult{\OK/(\piK^c)}=\costmult{\RK,c}=\oh{c\cdot\log c\cdot\log\log c\cdot\costmult{\RK}}=\softoh{c \costmult{\RK}}.
\]
Also, if \([\KL:\KK]=f\epsilon p^w\), we have
\(\costmult{\RL}=\costmult{\RK,f}=\softoh{f\costmult{\RK}}\)
and \(\costmult{\OL/\piL^d}=\costmult{\RL,d}=\costmult{\RK,fd}\).
With the Cantor-Zassenhaus algorithm \(\overline{B}\in\RK[z]\) of degree \(n\) can be factored in 
expected \(\costroot(\RK,n)=\softoh{(n^2+n\log(q))\costmult{\RK}}\) bit operations where \(q=\card{\RK}\). 
The discrete logarithm \(\log_{\xiRL}\overline{\gamma}\) can be computed in 
\(\costlog(\RL):=\oh{\costmult{\RL}{q^{f/2}}}\) bit operations with Shanks Baby-Step Giant-Step algorithm,
where \(q^f=\card{\RL}\).

\subsection*{{\O}ystein}
For the computation of characteristic polynomials we use the following.
\begin{proposition}
[compare {\cite[Proposition 4.3.3]{cohen-1}}]\label{prop char poly}
Let \(\alpha_1,\dots,\alpha_n\) be the roots of
\(\varphi(x)=x^n+\sum_{i=0}^{n-1} \varphi_i x^i\in\OK[x]\).
Write \(S(\varphi)=(S_1,\dots,S_n)\) with \(S_k=\sum_{i=1}^n \alpha_i^k\) where we set \(\varphi_i=0\) for \(i<0\).
Then
\begin{enumerate}
\item \(S_k=-k \varphi_{n-k}-\sum_{i=1}^{k-1}\varphi_{n-i} S_{k-i}\) for \(1\le k\le n\) and
\item 
\(\varphi_{n-k}=-S_k/k-\sum_{i=1}^{k-1}(\varphi_{n-i}S_{k-i})/k\) for \(1\le k\le n\)
\setcounter{savedcounter}{\value{enumi}}
\end{enumerate}
Given \((S_1,\dots,S_n)\) we write \(\varphi=X(S_1,\dots, S_n)\) such that \(X(S(\varphi))=\varphi\).\\[1ex]
Let \(\beta\in\OK[x]\) and write \(b^{(k)}_i\) for the \(i\)-th coefficient of \(\beta^k \bmod \varphi\).
\begin{enumerate}
\setcounter{enumi}{\value{savedcounter}}
\item \(\chr_{\varphi}({{\beta}})=X(T_1,\dots,T_n)\)
where \(T_k=nb^{(k)}_0+\sum_{i=1}^{n-1}S_i b^{(k)}_i\)

\item 
\(\chr_{\varphi}\left(\frac{\beta}{\piK^u}\right)(x)=
\frac{\psi(x)}{\cont{\piK}{\psi}}\)
where \(\psi(x)=\chr_\varphi(\beta)(\piK^u x)\).
\end{enumerate}
\end{proposition}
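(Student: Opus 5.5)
The plan is to derive all four statements from the single identity relating $\varphi'/\varphi$ to power sums, followed by linear algebra over $\KK$. Statements (1)--(2) are Newton's identities, (3) is the trace formula for power sums of conjugates, and (4) is a substitution argument.

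For (1), I would work in $\KK[[x^{-1}]]$ and use the logarithmic derivative
\[
x\frac{\varphi'(x)}{\varphi(x)}=\sum_{i=1}^n\frac{x}{x-\alpha_i}=n+\sum_{k\ge1}S_kx^{-k},
\]
that is, $x\varphi'(x)=\varphi(x)\bigl(n+\sum_{k\ge1}S_kx^{-k}\bigr)$. Comparing the coefficients of $x^{n-k}$ for $1\le k\le n$ gives $(n-k)\varphi_{n-k}=n\varphi_{n-k}+S_k+\sum_{i=1}^{k-1}\varphi_{n-i}S_{k-i}$, with $\varphi_n=1$. Rearranged, this is exactly (1).

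Statement (2) is (1) solved for $\varphi_{n-k}$. Dividing by $k$ is legitimate because $\KK$ has characteristic $0$. Since (2) recovers $\varphi_{n-1},\varphi_{n-2},\dots,\varphi_0$ recursively from $(S_1,\dots,S_n)$, the map $X$ is well defined and satisfies $X(S(\varphi))=\varphi$.

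For (3), I would use that the roots of $\chr_\varphi(\beta)$ are $\beta(\alpha_1),\dots,\beta(\alpha_n)$, counted with multiplicity, so its power sums are
\[
T_k=\sum_j\beta(\alpha_j)^k=\sum_j r_k(\alpha_j), \qquad r_k=\beta^k\bmod\varphi=\sum_i b^{(k)}_ix^i .
\]
Here I use that $\varphi(\alpha_j)=0$, so $\beta(\alpha_j)^k=r_k(\alpha_j)$. Expanding gives $T_k=\sum_i b^{(k)}_iS_i$ with $S_0=n$. Applying $X$ then yields the characteristic polynomial by (2).

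The only delicate point is (4). Let $\beta_j=\beta(\alpha_j)$. Then
\[
\psi(x)=\prod_j(\piK^ux-\beta_j)=\piK^{un}\,\chr_\varphi\!\left(\frac{\beta}{\piK^u}\right)(x).
\]
The claim therefore reduces to showing $\cont{\piK}{\psi}=\piK^{un}$. This holds exactly when the monic polynomial $\chr_\varphi(\beta/\piK^u)$ has integral coefficients, i.e.\ when $\beta/\piK^u$ is integral. A monic integral polynomial has content $1$, and the content is multiplicative under scaling by $\piK^{un}$.

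I would make this hypothesis explicit: $u\le v_\KK(\beta(\alpha))$, which is how the statement is used, to rescale an element of known valuation. Without it, dividing $\psi$ by its content gives a primitive polynomial that is not monic, so the equality can only hold under this hypothesis.
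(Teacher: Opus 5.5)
Your proof is correct and is the standard argument for the result the paper only cites from \cite[Proposition 4.3.3]{cohen-1} without proof: Newton's identities from the logarithmic derivative for (1)--(2), power sums of the conjugates $\beta(\alpha_j)$ for (3), and rescaling for (4). Your caveat on (4) is justified. As literally stated it needs $v_\KK(\beta(\alpha_j))\ge u$ for all roots: with $\varphi=x$, $\beta=1$, $u=1$ one gets $\piK x-1$ instead of $x-\piK^{-1}$. The paper leaves this hypothesis implicit because its uses involve only integral elements, such as the Newton-lifted element $\alpha$ in Algorithm~\ref{alg eisenore}.
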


\begin{lemma}\label{prop comp char poly}
Let \(\varphi\in\OK[x]\) be monic and irreducible of degree \(n\).  Let
\(\gamma=\beta \piK^{-u}\in\KK[x]\) where \(\beta\in\OK[x]\).
Following Proposition \(\ref{prop char poly}\), \(\chr_{\varphi}({\gamma})\) can be computed to a \(\piK\)-adic precision 
of \(b\) in \(\costchar(\varphi,u,b):=\oh{\costmult{n}\costmult{\OK/(\piK^c)}}\) bit operations, where 
\(c=b+\max\{n\cdot u,e_\KK \lceil\log_p n\rceil\}\).
\end{lemma}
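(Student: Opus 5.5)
The plan is to follow the recipe of Proposition \ref{prop char poly} directly and account for the cost and precision of each stage. First I reduce to the integral case. By Proposition \ref{prop char poly}(4), $\chr_\varphi(\gamma)$ is obtained from $\psi(x)=\chr_\varphi(\beta)(\piK^u x)$ by dividing out the content. The $i$-th coefficient of $\chr_\varphi(\beta)$ gets multiplied by $\piK^{u(n-i)}$, and dividing by the content subtracts at most $nu$ digits. So to know $\chr_\varphi(\gamma)$ to precision $b$, it suffices to know $\chr_\varphi(\beta)$ to precision $b+nu$. Rescaling and dividing costs only $O(n)$ ring operations and is negligible.

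Next I compute $\chr_\varphi(\beta)$ with integral $\beta$ via power sums.

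\begin{enumerate}
\item Compute $S(\varphi)=(S_1,\dots,S_n)$ using the Newton recursion (1). This needs no divisions, only $O(n^2)$ ring operations. It can be organised as a power series inversion, namely $S$ comes from $\varphi'/\varphi$ reversed, in $O(\costmult{n})$ ring operations.
\item Compute the successive powers $\beta^k \bmod \varphi$ for $k\le n$. Each step is one polynomial multiplication and one reduction modulo $\varphi$, costing $O(\costmult{n})$ ring multiplications.
\item Form the traces $T_k=nb^{(k)}_0+\sum_i S_i b^{(k)}_i$ as in (3).
\item Recover the coefficients with $X(T_1,\dots,T_n)$ using (2).
\end{enumerate}

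To match the claimed bound $\oh{\costmult{n}\costmult{\OK/(\piK^c)}}$, I would read $\costmult{n}$ as the count of ring operations for these polynomial steps. Each ring operation is in $\OK/(\piK^c)$.

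The main obstacle is precision loss in step (2) of Proposition \ref{prop char poly}, where we divide by $k\le n$. Since $v_\KK(k)\le e_\KK v_p(k)\le e_\KK\lfloor\log_p n\rfloor$, each division loses at most $e_\KK\lceil\log_p n\rceil$ digits. The key step is to argue that this loss does not accumulate over the recursion: the coefficients of $\chr_\varphi(\beta)$ are integral, so errors in the $\varphi_{n-i}$ stay bounded. I would try to make this rigorous by writing $\varphi_{n-k}$ as the exact expression $e_k$ in the $T_i$ via Newton's identities. With this formula, the denominator is at most $k!$ for a monomial in $T_1,\dots,T_k$. That naive denominator gives a worse bound, so I would instead argue as follows.

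Work with precision $c'$ throughout and assume the computed $\varphi_{n-i}$ for $i<k$ are correct modulo $\piK^{c'-\ell}$. Then $\varphi_{n-k}$ is correct modulo $\piK^{c'-\ell-v(k)}$. To avoid the loss accumulating, one works with the integral quantities $kS$-type recursions, or recomputes at the final step, since the true $\varphi_{n-k}$ are integral. If this is delicate, I would fall back on doing the recovery via the power-series exponential $\exp(-\sum T_k x^k/k)$ with a uniform loss bound. That bound is the standard result that divided powers lose at most $e_\KK\lceil\log_p n\rceil$ digits in this range.

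Combining the two losses gives a working precision $c=b+\max\{nu,e_\KK\lceil\log_p n\rceil\}$. A maximum suffices rather than a sum because the $\piK^u$-scaling can be absorbed before the Newton recovery. Since $S$, the $b^{(k)}_i$ and the $T_k$ are all integral, the other steps are exact modulo $\piK^c$.
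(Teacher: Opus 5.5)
Your outline has the same shape as the paper's proof: power sums of $\varphi$ from a power-series quotient, the traces $T_k$ from (3), recovery via (2), and $nu$ extra digits for the rescaling in (4). However, the cost count has a genuine gap at the traces. Computing $\beta^k \bmod \varphi$ for $k=1,\dots,n$ one after another takes $n$ modular multiplications. Forming all $T_k=nb^{(k)}_0+\sum_i S_i b^{(k)}_i$ from them is an $n\times n$ matrix--vector product. So this stage costs $O(n\,\costmult{n})$ ring operations, a factor $n$ more than the lemma allows. Reading $\costmult{n}$ as ``the count of ring operations for these polynomial steps'' does not repair this. $\costmult{n}$ is the cost of a single product of polynomials of degree $n$, and the point of the lemma is that the whole computation is comparable to such a product at precision $c$. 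The paper's route avoids your stage: it obtains all $T_k$ at once by fast multipoint evaluation, quoting $O(\costmult{n}\log n)$ ring operations. It also does (1) and (2) as single power-series operations (Newton inversion of $t^n\varphi(1/t)$ and one product), each $O(\costmult{n})$. As written, your argument only yields $O(n\,\costmult{n}\,\costmult{\OK/(\piK^c)})$.

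On precision, the paper's proof only accounts for the $nu$ digits from (4) and simply builds $e_\KK\lceil\log_p n\rceil$ into $c$. Your worry about accumulation therefore goes beyond the paper, but you leave it unresolved, and it can be closed directly. Put $C(t)=t^n\chr_\varphi(\beta)(1/t)=\sum_k c_k t^k$ and $T(t)=\sum_{j\ge 1}T_j t^j$. Recursion (2) then reads $kc_k=-\sum_{j=1}^k T_j c_{k-j}$, i.e.\ $tC'=-TC$. Let $\eta$ be an error made at step $i$. It propagates through the exact recursion to $D=CF$ with $F'=i\eta t^{i-1}C^{-1}$, because $tD'+TD=i\eta t^i$ and $tC'+TC=0$. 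Since $C^{-1}\in\OK[[t]]$, its contribution to $c_k$ has valuation at least $v_\KK(\eta)+v_\KK(i)-\max_{l\le k}v_\KK(l)$. The error made at step $i$, including the effect of the truncated $T_j$, comes from dividing a quantity known modulo $\piK^N$ by $i$, so $v_\KK(\eta)\ge N-v_\KK(i)$. By linearity every output error then has valuation at least $N-e_\KK\lfloor\log_p n\rfloor$, with no accumulation.

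Finally, your reason for a maximum instead of a sum is not valid. After rescaling, $T_k(\gamma)=T_k(\beta)/\piK^{uk}$ is known only modulo $\piK^{N-uk}$, and the recovery then loses its own digits, so the two losses add. This does not hurt the lemma, because $b+nu+e_\KK\lceil\log_p n\rceil\le 2c$ and $\costmult{\OK/(\piK^{2c})}=O(\costmult{\OK/(\piK^{c})})$. That is the argument to give.
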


\begin{proof}
We go through the parts of Proposition \ref{prop char poly} in the order in which they would be used in the computation of
a characteristic polynomial.
\begin{enumerate}
\item Let \(E(t)=t^{n}\varphi(1/t)\) and \(S(t)=\sum_{k=0}^\infty S_kt^k\).
        Then \(S(-t)\equiv E^{\prime }(t){E(t)^{-1}}\pmod{t^{n+1}}\).
        Using Newton inversion we compute the inverse \(E(t)^{-1}\) of \(E(t)\) in \(\oh{n\log n}\)
        and \(E'(t) \cdot E(t)^{-1}\bmod{t^{n+1}}\) in \(\costmult{n}\) operations in
        \(\OK\).
\item[(3)] With fast multipoint evaluation, all \(T_k\) can be computed \(\oh{\costmult{n}\log n}\)
operations in \(\OK\).
\item[(2)] Similarly as in (1) this takes \(\costmult{n}\) operations in \(\OK\).
\item[(4)] When the coefficients of \(\gamma\) are not integral, to allow for the factor \((\piK^u)^n\) 
we increase the precision by \(n\cdot u\) to avoid precision loss.\qedhere
\end{enumerate}
\end{proof}

Our implementation of Proposition \(\ref{prop char poly}\) uses \(\oh{n^2}\) operations in \(\OK\), which is all that is needed for degrees of the polynomials that we are considering.

\begin{proposition}
Given an irreducible polynomial \(\psi\in\OK[x]\) 
of degree $n$ with discriminant exponent \(b=v_\KK(\disc(\psi))\),
Algorithm \(\ref{alg eisenore}\) returns a \(\nu\)-{\O}ystein polynomial
\(\varphi\in\OK[x]\) such that \(\KK[x]/(\psi)\cong\KK[x]/(\varphi)\) in \(\softoh{nb^2\costmult{\RK}}\) bit operations. 
\end{proposition}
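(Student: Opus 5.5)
We prove the statement by going through the steps of Algorithm \ref{alg eisenore} and bounding the cost of each with the tools above. Correctness has already been argued after the algorithm: the single Newton step gives \(v_\KL(\nu(\alpha))=1\), so \(\chr_\psi(\alpha)\) is \(\nu\)-{\O}ystein. It also defines the same extension, because \(\alpha\) generates \(\KL\): its residue \(\overline\alpha\) generates \(\RL\) and \(\nu(\alpha)\) is a uniformizer, as in Lemma \ref{lem make eisenore}. So the work is the complexity count.

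First we fix the working precision. All quantities involved (the OM-representatives \(\Pi,\Gamma\), the element \(\alpha\), and the characteristic polynomial) are determined modulo \(\piK^{c}\) with \(c=\oh{b}\). The reason is that a \(\nu\)-{\O}ystein polynomial of discriminant exponent \(b\) is determined up to isomorphism of the extension by its coefficients to precision roughly \(b/n+1\) (Corollary \ref{coro easy}, the Krasner bound). Moreover, denominators introduced by \(\Pi(\beta)\), \(\Gamma(\beta)\), and the division by \(\nu'(\Gamma(\beta))\) (a unit, since \(\overline\nu\) is separable) have valuation bounded by the index of \(\OK[\beta]\) in \(\OL\), which is at most \(b\).

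Next we cost the OM-algorithm in step (1). By the complexity results for the Montes algorithm in \cite{mng-newton,guardia-nart-pauli}, determining \(e\), \(f\) and the polynomials \(\Pi,\Gamma\) for an irreducible \(\psi\) costs \(\softoh{n b^2\costmult{\RK}}\) bit operations. Roughly, there are \(\oh{b}\) Newton-polygon steps, each on \(\oh{n}\) coefficients at precision \(\oh{b}\). I expect this to be the dominant term and the main obstacle, since the bound must be imported from the literature rather than derived from what is stated above. Step (2) is a table lookup. Step (3) is read off from the OM data, namely the residual polynomials and the \(\phi\)-adic expansions, at negligible cost.

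Step (4) consists of a constant number of evaluations of \(\nu\) and \(\nu'\) at \(\Gamma(\beta)\) in \(\OK[x]/(\psi)\), plus one inversion of a unit. Each is \(\oh{f}\) multiplications modulo \(\psi\) at precision \(c\), hence \(\softoh{fn b\costmult{\RK}}\subseteq\softoh{nb^2\costmult{\RK}}\) because \(f\le n\le b+1\).

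Step (5) is bounded by Lemma \ref{prop comp char poly}. We write \(\alpha=\beta'\piK^{-u}\) with \(u\le b/n\) up to constants, so the required precision is \(c=\oh{b+nu+e_\KK\log n}=\oh{b}\). The cost is then \(\oh{\costmult{n}\costmult{\OK/(\piK^{c})}}=\softoh{nb\costmult{\RK}}\). Summing all steps gives the stated \(\softoh{nb^2\costmult{\RK}}\).
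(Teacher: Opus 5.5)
Your decomposition matches the paper's proof. Correctness comes from the discussion after Algorithm~\ref{alg eisenore}. The OM cost is imported from the literature as the dominant term; the paper takes it from \cite{bauch-nart-stainsby}, where it reads $\softoh{(n^2+nb^2)\costmult{\RK}}$, so an $n^2$ term also has to be absorbed. Step (5) is handled by Lemma~\ref{prop comp char poly}. However, your absorption step rests on $f\le n\le b+1$, which is false in general when $f>1$. For instance, $\nu(x)^2-\piK$ with $p\ne2$ defines a tamely ramified quadratic extension of the degree-$f$ unramified extension, and there $n=2f$ while $b=f$. Use instead, as the paper does, $b\ge f(e+j-1)$. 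This gives $f\le b$ and $n\le 2b$ whenever $e\ge2$, which is exactly what you need to absorb $fnb$ and $n^2$.

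The more substantive gap is the precision bookkeeping. Bounding denominators by the index of $\OK[\beta]$ in $\OL$ (at most $b$) does not justify working at precision $O(b)$ in step (4). In the power basis, each of the $f$ multiplications by $\Gamma(\beta)$ needed to evaluate $\nu$ loses as many digits as the denominator exponent. With your bound you would therefore need working precision $O(fb)$, which only gives $\softoh{f^2nb\costmult{\RK}}$ for this step, and that is not within the claimed bound in general. What you need is the sharper bound that you silently use in step (5) when you write $u\le b/n$. It follows from $\psi'(\beta)\OL\subseteq\OK[\beta]$ (the complementary module of $\OK[\beta]$ is $\psi'(\beta)^{-1}\OK[\beta]$) together with $v_\KK(\psi'(\beta))=b/n$, which give $\piK^{\lceil b/n\rceil}\OL\subseteq\OK[\beta]$. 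This is the paper's $c=\lfloor b/n\rfloor+1$. With it, the precision only has to be raised by $cf\le b/e+f$ in step (4) and by $nc\le b+n$ in step (5), and your cost estimates for those steps then go through.
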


\begin{proof}
Let \(c=\lfloor b/n\rfloor+1\). 
The costs of the steps of the algorithm are:
\begin{enumerate}
\item[(1,3)] The Okutsu invariants of \(\psi\) can be computed in 
\(\softoh{(n^2+n b^2)\costmult{\RK}}
\) 
bit operations \cite{bauch-nart-stainsby}.  
This includes the computation of \(\Pi\) and \(\Gamma\).
\item[(2)] By our assumption we can obtain \(\nu\) by lookup.
\item[(4)] The precision \(c\) required in (2) is also an upper bound for the minimum of 
the valuations of the coefficients of \(\Gamma\) and \(\Pi\).  To avoid 
precision loss we increase the precision by \(cf\), so \(\alpha\) is computed in 
\(\oh{f\costmult{\OK/(\piK^{c+cf}),n}}
=\softoh{f(c+cf)n\costmult{\RK}}
\) bit operations.
\item[(5)] With the bound for the valuation of the denominator from (4) this takes
\(\costchar(\psi,c,c)
=\oh{\costmult{n}\costmult{\OK/(\piK^{c+nc})}}
=\softoh{n(c+nc)\costmult{\RK}}
\)
bit operations.
\end{enumerate}
As \(\psi\) is irreducible we have 
\(b=v_\KK(\disc(\psi))\ge v_\KK\left(\disc(\KK[x]/(\psi))\right)=f(e+j-1)\) for some 
\(j\in\N_0\) where \(e\) and \(f\) are the ramification index and inertia degree of 
\(\KK[x]/(\psi)\).
Thus in total we get
\(\softoh{({n b^2})\costmult{\RK}}\) bit operations.
\end{proof}

\subsection*{Reduction}
Let \(\varphi\in\OK[x]\) be \(\nu\)-{\O}ystein of degree \(n=fe\) with inertia degree \(f\) and ramification index \(e=\epsilon p^w\) with \(\gcd(p,\epsilon)=1\) and \(v_\KK(\disc(\varphi))=f(n+J-1)\).
Let  \(\KL=\KK[x]/(\varphi)\).
Let \(\rho\) be the $\nu$-ramification polynomial of \(\varphi\), let 
\((-p^{w_u},v_\KL(\rho_{p^{w_u}}))\) and
\((-1,v_\KL(\rho_{1}))=(-1,J)\) be the endpoints of the steepest segment of the ramification polygon \(\rampol\),
and let \(\widehat{m}\) be the slope of this segment.
It follows from Corollary \ref{coro easy} that it is sufficient to determine the reduced polynomials  over 
\(\OK/\left(\piK^{\widehat{c}}\right)\) for any \(\widehat{c}\in\N\) such that
\begin{equation}\label{eq prec}
\widehat{c}>\Phi_{\rampol}(\widehat{m})+1=v_\KL(\rho_1)+\widehat{m}+1=J+\widehat{m}+1.
\end{equation}
We use the \(\piK\)-adic precision of \(\widehat{c}\) throughout this section.

\begin{lemma}\label{lem cost 0ram}
Let \(\varphi\in\OK[x]\) be an Eisenstein polynomial of degree \(e=\epsilon p^w\).
Algorithm \(\ref{alg polred0ram}\) runs in \(\softoh{eq\widehat{c}\, \costmult{\RK}}\) bit operations.
\begin{proof}
The costs of the steps of the algorithm are:

\begin{enumerate}
\item The coefficients of \(A_{m_1},\dots, A_{m_u}\) can be read directly from 
\(\rho\).
\item We need to compute up to \(w\) discrete logarithms 
at \(\costlog(\RK)=\oh{\costmult{\RK}\sqrt{q}}\) bit operations each and apply Lemma \ref{lem min} up to \(w\) times 
at \(\oh{\log(e)\log( q^f )}\) bit operations each.
\item Because \(\card{D}\le q \), this can be completed in 
\(\oh{eq\widehat{c}\,\costmult{\RK}}\) bit operations.
\item There are at most \(q\) different \(\psi\), each computed in \(e\costmult{\RK,\widehat{c}}\) bit operations.
\end{enumerate}
In total, this algorithm can be completed in 
\(\softoh{eq\widehat{c}\,\costmult{\RK}}\) 
bit operations.
\end{proof}
\end{lemma}

\begin{lemma}\label{lem cost 0}
Let \(\varphi\in\OK[x]\) be \(\nu\)-{\O}ystein of degree \(n=f\epsilon p^w\).
Algorithm \(\ref{alg polred0}\) returns the set of slope \(0\) reduced polynomials in \(\softoh{\bigl({q^{f/2}}+n^2(\widehat{c}+e_\KK)\bigr)\costmult{\RK}}\) 
bit operations.
\end{lemma}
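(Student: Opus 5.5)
We bound the cost of each step of Algorithm \ref{alg polred0} separately, mirroring the proof of Lemma \ref{lem cost 0ram}, and work throughout at the \(\piK\)-adic precision \(\widehat{c}\) from (\ref{eq prec}). Arithmetic in \(\OL=\OK[\alpha]\) modulo \(\varphi\) at this precision costs \(\costmult{\OK/(\piK^{\widehat c}),n}=\softoh{n\widehat{c}\,\costmult{\RK}}\) bit operations per multiplication.

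Step (1) needs one power \(\nu(\alpha)^e\) and one division by it. We obtain \(\nu(\alpha)^{-1}\) from the relation \(\varphi(\alpha)=0\), which gives \(\nu(\alpha)\cdot(\nu(\alpha)^{e-1}+\dots)=-\varphi_0^*(\alpha)\). Dividing by \(\nu(\alpha)^e\) costs a loss of \(e\) digits of \(\piL\)-precision, that is at most \(e_\KK\)-size extra \(\piK\)-digits. So this step is \(\oh{\log e}\) multiplications at precision \(\widehat c+e_\KK\), well within the bound.

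Step (2) computes \(\varphi(\nu(\alpha)x+\alpha)\) in \(\OL[x]\). This is a Taylor shift of a degree \(n\) polynomial with coefficients in \(\OL\), which costs \(\oh{n}\) to \(\oh{n^2}\) ring operations in \(\OL\) depending on the method. We take the naive bound: \(n^2\) operations in \(\OK/(\piK^{\widehat c+e_\KK})\) per \(\OL\)-coefficient, giving \(\softoh{n^2(\widehat c+e_\KK)\costmult{\RK}}\). We then reduce modulo \(\nu(\alpha)\) and divide by the content, which is cheap. Step (4) is a linear combination in \(\RL[x]\) and is negligible.

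Step (3) is where \(q^{f/2}\) appears. We need one discrete logarithm \(\log_{\xiRL}\overline\varphi_{0,1}(\xiRL)\) in \(\RL\), which costs \(\costlog(\RL)=\oh{\costmult{\RL}q^{f/2}}=\softoh{q^{f/2}\costmult{\RK}}\) by Baby-Step Giant-Step. After that, minimizing over \(l\) and \(\sigma\) is pure integer arithmetic: apply Lemma \ref{lem min} once for each of the \(f\) automorphisms \(\sigma\), with \(\log\sigma(\cdot)=q^u a\bmod (q^f-1)\). This requires no further discrete logs.

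Step (5) finds the roots in \(\RL\) of \(\overline B\), of degree at most \(p^w\le n\). By Cantor--Zassenhaus this takes \(\costroot(\RL,n)=\softoh{(n^2+nf\log q)f\costmult{\RK}}\), which is absorbed into the \(n^2\) term.

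Step (6) computes characteristic polynomials, one per element of \(\Theta\). This is the main obstacle, since \(\card\Theta\) may a priori be large. Because \(\overline B\) is affine with additive part of degree \(p^w\) (Lemma \ref{lem additive}), we expect \(\card\Theta\le p^w\); the first thing to try is to argue that only boundedly many \(\theta\) are needed, or to charge these computations to the \(n^2\) term. By Lemma \ref{prop comp char poly} with \(u=0\), each characteristic polynomial costs \(\softoh{n(\widehat c+e_\KK)\costmult{\RK}}\). With \(\card\Theta\le n\), the total is \(\softoh{n^2(\widehat c+e_\KK)\costmult{\RK}}\). Summing all steps gives the claimed bound.
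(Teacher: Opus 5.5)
Your cost decomposition is the paper's: one Baby-Step Giant-Step discrete logarithm, the Frobenius action on logarithms and Lemma~\ref{lem min} in step (3), Cantor--Zassenhaus for the roots of $\overline B$, and one characteristic polynomial per $\theta\in\Theta$ at the cost of Lemma~\ref{prop comp char poly}. The final total is right, but your argument for the key count $\card\Theta$ in steps (5)--(6) is false. Lemma~\ref{lem additive} is stated only for $m\in\N$, and it fails for $m=0$. Here $\Aadd_0$ is the reduction of $\rho$ itself, whose support lies on the horizontal segment of $\rampol$, from degree $p^w$ up to degree $e=\epsilon p^w$. Already for $\varphi=x^2+\piK$ with $p$ odd one gets $\overline\rho(x)=(x+1)^2-1=x^2+2x$, which is not additive. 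So $\overline B$ is not affine, its degree is $e$ rather than $p^w$, and $\Theta$ is not a coset of a kernel of size at most $p^w$.

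What you need, and what the paper uses, is simply that $\Theta$ is in bijection with the roots in $\RL$ of the nonzero polynomial $\overline B$, so $\card\Theta\le\deg\overline B\le e\le n$. This gives $\softoh{n^2(\widehat c+e_\KK)\costmult{\RK}}$ for step (6). Running Cantor--Zassenhaus at degree at most $e$ instead of $n$ gives $\softoh{(ne+nf\log q)\costmult{\RK}}$ for step (5), which is genuinely absorbed.

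Your count in step (2) is also muddled. A naive Taylor shift in $\OL[x]$ is $O(n^2)$ multiplications in $\OL$, i.e.\ $\softoh{n^3\widehat c\,\costmult{\RK}}$, not $n^2$. The paper avoids this via (\ref{eq rho coeff}): $\nu(\alpha)^{e-i}\rho_i=\sum_{j\ge i}\binom{j}{i}\varphi_j\alpha^{j-i}$ is already written in the basis $1,\alpha,\dots,\alpha^{n-1}$. Each coefficient therefore costs $O(n)$ operations in $\OK$. Only the coefficients with $p^w\le i\le e$, those on the horizontal segment, are needed to form $\Aadd_0$.
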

\begin{proof}  
The costs of the steps of the algorithm are:
\begin{enumerate} 
\item Fast exponentiation computes \(\eta\)  in \(\oh{\log(e)\costmult{\RL,n\widehat{c}}}\) 
bit operations.
\item With (\ref{eq rho coeff}) the relevant coefficients of the $\nu$-ramification polynomial can be computed in 
\(\oh{n (p^w-1)\epsilon\cdot\costmult{\RK,\widehat{c}}}\) bit operations.
\item Evaluating \(\overline\varphi^*_{0,1}(\xiRL)\) takes 
\(\oh{f\costmult{\RL}}\) 
bit operations, and its discrete logarithm can be computed in \(\oh{q^{f/2}\costmult{\RL}}\) bit operations.
Since \(\Aut(\RL/\RK)\) is generated by the Frobenius automorphism \(\xiRL\mapsto\xiRL^q\) the discrete logarithms
of the conjugates only need additional \(\oh{f\costmult{\Z/(q^f-1)}}\) bit operations.
Computing the minimum for each of the \(f\) conjugates separately and then taking the minimum  takes 
\(\oh{f\log(e)\log( q )}\)
bit operations with Lemma~\ref{lem min}. Since $f = O(\log(p^f))$, it can be omitted.
\item[(4,5)] We have \(\deg\Aadd_0=\deg\overline{B}=(\epsilon-1)p^w\). 
We find the roots of \(\overline{B}\) in expected 
\(
\costroot(\RL,(\epsilon-1)p^w)=
\softoh{(((\epsilon-1)p^w)^2+(\epsilon-1)p^w f\log q)f\costmult{\RK}}
\)
bit operations.
\item[(6)]
Because \(\overline{B}\) has \(\card\Theta\le\deg\overline{B}=(\epsilon-1)p^w\)  roots,  
this step takes up to 
\((\epsilon-1)p^w\,\costchar(\KK,0,\widehat{c})=
\softoh{n^2(\widehat{c}+e_\KK)\,\costmult{\RK}}\) bit operations.
\end{enumerate}
Adding these yields the statement of the lemma.
\end{proof}

\begin{lemma}\label{lem cost slope}
Let \(\varphi\in\OK[x]\) be \(\nu\)-{\O}ystein of degree \(n=f\epsilon p^w\) and let \(m\in\N\).
Denote by \(-p^{u}\) and \(-p^{v}\) be the abscissas of the endpoints of the segment of slope \(m\) 
on \(\rampol(\varphi)\) (if there is no segment of slope $m$ then $u=v$).
Algorithm \(\ref{alg polredslope}\) runs in 
\(\softoh{(f^3+p^{u-v} \costmult{n}(\widehat{c}+e_\KK))\,\costmult{\RK}}\) 
bit operations.
\begin{proof} 
The costs of the steps of the algorithm are:
\begin{enumerate}
\item Fast exponentiation computes \(\eta\)  in \(\oh{\log(e)\costmult{\RL,n\widehat{c}}}\) 
bit operations.
\item[(2,3)] With (\ref{eq rho coeff}) this takes \(\oh{n\costmult{\RK,\widehat{c}}}\) bit operations
\item[(4)] 
The \(\nu\)-expansion of \(\varphi\) can be computed in \(\oh{fn\cdot\costmult{\RK,\widehat{c}}}\)
bit operations and the coefficient \(\varphi^*_{i,k}\) is computed in at most 
\(\oh{f\widehat{c}\cdot\costmult{\RK}}\)
bit operations.
By Lemma \ref{lem additive} \(\Aadd_m\) has at most \(u-v+1\) non-zero coefficients.
An \({f\times f}\) matrix over \(\RK\) representing \(\Aadd_m:\RL\to\RL\) where \(\Aadd_m\in\RL\) and its row echelon form  can be computed in \(\oh{f(u-v+1)\costmult{\RL,f}+f^3\costmult{\RK}}\) bit operations.
\item[(5)]  
We use linear algebra to find \(\Theta\).  The system of linear equations can be solved in \(f^3\costmult{\RK}\) bit operations.
\item[(6)] We have \(\card{\Theta}\le p^{u-v}\).  Thus, we need to compute at most \(p^{u-v}\) 
characteristic polynomials which can be done with \(\softoh{p^{u-v} \costmult{n} (\widehat{c}+e_\KK)\costmult{\RK}}\) 
bit operations (compare Lemma \ref{lem cost 0} (6)) \arxivonly{where the factor \(\log_p n\)
is swallowed by the \(\softohalone\)}. 
\end{enumerate}
Adding the above we obtain the statement of the Lemma.
\end{proof}
\end{lemma}

\begin{proposition}
Let \(\varphi\in\OK[x]\)  of degree \(n=f\epsilon p^w\) with \(v_\KK(\disc(\varphi))=f(n+J-1)\) be \(\nu\)-{\O}ystein.
Algorithm \(\ref{alg polredpadic}\) returns the distinguished defining polynomial of \(\KK[x]/(\varphi)\) in 
\(\softoh{\bigl(q^{f/2}+eq\widehat{c}+n^2(\widehat{c}+e_\KK)+n\widehat{c}^2\bigr)\costmult{\RK}}\) bit operations
where \(\widehat{c}=J+\widehat{m}+1\) as in \((\ref{eq prec})\).
\end{proposition}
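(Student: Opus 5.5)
We follow the structure of Algorithm \ref{alg polredpadic} step by step, bounding each step with the cost lemmas already established, and then add the bounds. All computations are carried out at the $\piK$-adic precision $\widehat{c}$ from (\ref{eq prec}). Corollary \ref{coro easy} justifies that this precision suffices, so every characteristic polynomial and every $\nu$-expansion lives over $\OK/(\piK^{\widehat{c}})$.

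Step (1) computes the $\nu$-ramification polynomial from (\ref{eq rho coeff}). This requires $n$ coefficients, each a sum of at most $n$ products in $\OL/(\piL^{\cdot})$, together with the powers $\nu(\alpha)^{i-e}$. I expect this to cost $\softoh{n\widehat{c}^2\costmult{\RK}}$ or less; at worst it is $\softoh{n^2\widehat{c}\costmult{\RK}}$, which is already absorbed by the $n^2(\widehat{c}+e_\KK)$ term. The Newton polygon $\rampol$ is then read off in linear time.

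Step (2) is handled by Lemma \ref{lem cost 0ram} in the Eisenstein case and by Lemma \ref{lem cost 0} otherwise. Together these give $\softoh{(q^{f/2}+eq\widehat{c}+n^2(\widehat{c}+e_\KK))\costmult{\RK}}$.

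Step (3) is the main obstacle. The loop over $m=1,\dots,\lceil\widetilde m\rceil$ calls Algorithm \ref{alg polredslope} once for every element of $M_{m-1}$, so the sets $M_m$ must not blow up. The plan is to argue that, after each slope, all polynomials in $M_m$ agree on the digits already fixed. By Theorem \ref{theo red}(1) and (2), those digits are determined by the reduction of Definition \ref{def red cokernel 2}, so $\card{M_m}$ is bounded by the size of the kernel of $\Aadd_m$ on $\RL$, namely $p^{u-v}$ by Lemma \ref{lem additive}. Multiplying over the segments then bounds the total number of calls. Lemma \ref{lem cost slope} gives each call as $\softoh{(f^3+p^{u-v}\costmult{n}(\widehat{c}+e_\KK))\costmult{\RK}}$.

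Summing over $m$ needs two observations. First, the slopes $m$ with $u=v$ contribute only $\softoh{(f^3+n\widehat{c})\costmult{\RK}}$ each. Second, $\widetilde m\le J\le\widehat{c}$ and $f^3\le n^2 f$. Since $\prod p^{u-v}\le p^w\le n$ over the segments, the total becomes $\softoh{(n^2(\widehat{c}+e_\KK)+n\widehat{c}^2)\costmult{\RK}}$, where the $n\widehat{c}^2$ term accounts for the $\widehat{c}$ iterations of $\nu$-expansion work, each of size $n\widehat{c}$. If the multiplicative growth of $\card{M_m}$ turns out not to collapse as hoped, I would instead bound $\card{M_m}\le\card{\RL}$ per step using the fact that distinct $\theta$ only alter the undetermined cokernel coordinates. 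I would then check that this still fits inside the $eq\widehat{c}$ or $q^{f/2}$ terms.

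Step (4) only zeros coefficients, which costs $O(n\widehat{c})$. Step (5) compares at most $\card{M_{\lceil\widetilde m\rceil}}$ polynomials by Definition \ref{def final dist}. Each comparison needs discrete logarithms in $\RL$, but these are shared and precomputed once in the $q^{f/2}$ term, so the step is dominated by the previous ones. Adding the costs of all five steps yields the claimed bound.
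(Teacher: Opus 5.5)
Your skeleton matches the paper's: you cost each step of Algorithm~\ref{alg polredpadic} via Lemmas~\ref{lem cost 0ram}, \ref{lem cost 0} and \ref{lem cost slope}, and you bound the number of candidates by the telescoping product $\prod p^{u-v}\le p^w$. However, the bookkeeping that should produce the stated bound breaks in step (3) and is missing in step (5).

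In step (3) the kernel bound gives $\card{M_m}\le p^{u-v}\card{M_{m-1}}$, which is fine. But when you sum over the slopes with $u=v$ you charge each slope once, although the loop calls \texttt{polredslope} on every $\psi\in M_{m-1}$. Here $\card{M_{m-1}}$ is only bounded by $p^w$, and there are up to $\widetilde m\le\widehat c$ such slopes. Your own accounting therefore yields $\softoh{p^w\widehat c\,(f^3+n(\widehat c+e_\KK))\costmult{\RK}}$, a term of order $p^w n\widehat c^2\le n^2\widehat c^2$. So your derivation of the $n\widehat c^2$ term (``$\widehat c$ iterations of size $n\widehat c$'') does not hold. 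The paper argues differently. It notes that only slopes carrying a segment of $\rampol$ create new polynomials, charges only these (at most $w$), and obtains $\softoh{(wf^3+p^w n(\widehat c+e_\KK))\costmult{\RK}}$, which lies within the stated bound since $p^w\le n$. If you also want to charge the other slopes, you must keep the factor $\card{M_{m-1}}$. Drop the fallback $\card{M_m}\le\card{\RL}$. It only gives bounds of size $q^f n\widehat c$, which do not fit inside $q^{f/2}$ or $eq\widehat c$ once $f\ge2$, and the kernel bound makes it unnecessary.

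In step (5) you give no count. Also, the discrete logarithms are not ``precomputed once in the $q^{f/2}$ term'': Lemma~\ref{lem cost 0} computes a single logarithm (of $\overline\varphi^*_{0,1}(\xiRL)$), not a table. In the paper, this step is where the $n\widehat c^2$ term actually comes from. Merge sort on at most $p^w$ candidates needs $O(p^w\log p^w)$ comparisons. Each comparison computes:
\begin{itemize}
\item the coefficient differences and their valuations, in $O(n\log\widehat c\,\costmult{\OK/(\piK^{\widehat c})})$;
\item the $\piK$-adic expansion of the decisive coefficient, in $\widehat c\,\costmult{\OK/(\piK^{\widehat c})}$;
\item a discrete logarithm.
\end{itemize}
The total is $\softoh{p^w((n+\widehat c)\widehat c+\sqrt q)\costmult{\RK}}$, which with $p^w\le n$ gives the $n^2\widehat c$ and $n\widehat c^2$ terms.

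A minor point on step (1): the paper uses Lemma~\ref{lem additive} to compute only $\rho_{p^i}$, $0\le i\le w$, which makes that step negligible. Your estimate there is only a guess.
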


\begin{proof} 
The costs of the steps of the algorithm are:
\begin{enumerate}
\item 
By Lemma \ref{lem additive}, the coefficients that contribute to the ramification polygon are \(\rho_{i}\) for \(i\in\{1,p,\dots,p^w\}\).  They can be computed using (\ref{eq rho coeff}) and their valuations can be computed by repeated squaring of the uniformizer in a total of \(\oh{w \cdot\log_2 J\cdot \costmult{\OK/(\piK)^{J+1}}}\) bit operations.
\item The slope 0 reduction step takes 
\(\softoh{\bigl(q^{f/2}+n^2(\widehat{c}+e_\KK)\bigr)\costmult{\RK}}\)  bit operations by Lemma \ref{lem cost 0} if $f > 1$
or \(\softoh{eq\widehat{c}\,\costmult{\RK}}\) by Lemma \ref{lem cost 0ram} if $f=1$.
\item  
For \(m\in\{1,\dots,\widehat{m}\}\) the slope \(m\)
reduction step only results in additional polynomials 
when \(\rampol\) has a segment of slope \(m\).
Write \((-p^{w_i},v_\KL(\rho_{p^{w_i}}))\) and
\((-p^{w_{i+1}},v_\KL(\rho_{p^{w_{i+1}}}))\)
for the endpoints of the \(i\)-th segment of \(\rampol\).
It follows from (6,7) in the proof of Lemma \ref{lem cost slope} that
the number of reduced polynomials after this step 
is at most \(\prod_{j=1}^{u-1} (p^{w_{j+1}}/p^{w_{j}})=p^w\).
Since the number of segments of \(\rampol(\varphi)\) is at most \(w\)
the number of bit operations for this step is less
than 
\(\softoh{(wf^3+p^w n(\widehat{c}+e_\KK))\,\costmult{\RK}}\)
by Lemma \ref{lem cost slope}.
\item In practice {\(\widehat{c}\)} is chosen as the precision
 of these computations, so that at most \(n-2\) coefficients  to 0.
\item To compare two polynomials \(\varphi,\psi\in\OK[x]\) with respect to the ordering from Definition \ref{def final dist}
we compute their difference and compute the valuations of its coefficients in \(\oh{n\log \widehat{c}\,\costmult{\OK/(\piK^{\widehat{c}})}}\) bit operations.
The lowest valuation determines which coefficient we need to consider.  Its \(\piK\)-adic expansions is computed 
in at most \(\widehat{c}\,\costmult{\OK/(\piK^{\widehat{c}})}\) bit operations and the discrete logarithm of the relevant 
coefficient is found in \(\oh{\sqrt{q}}\costmult{\RK}\) bit operations.
With merge sort, we find the smallest reduced polynomial with \(\oh{\log(p^w)p^w}\) of these comparisons.
Thus this step can be completed in
\(\softoh{p^w((n+\widehat{c})\widehat{c}+\sqrt{q})\costmult{\RK}}\) bit operations.
\end{enumerate}
 In total these are
 \(\softoh{\bigl(q^{f/2}+eq\widehat{c}+n^2(\widehat{c}+e_\KK)+n\widehat{c}^2\bigr)\costmult{\RK}}\)  bit operations.
\end{proof}

For example, when $K=\Qp$, taking the largest possible $\widehat{c}$ yields a maximum runtime of $\softoh{p^{1+f/2} + n^4p}$.

\subsection*{Timings}
In the table below we give minimum, maximum, and average 
running times for finding distinguished defining polynomials using our implementation
of Algorithm \ref{alg eisenore} \texttt{oystein} and Algorithm \ref{alg polredpadic} \texttt{polredpadic} in Magma.
In each case, we considered a random defining polynomial (created using Pari's \texttt{poltschirnhaus} function) for each ramified extension of degree \(n\) over \(\Q_p\).  All times are in seconds.

\begin{center}
\begin{tabular}{r|r|r|S|S|S}
\(p\) & \(n\) & \(\#\KL\) &\text{min time} & \text{max time} & \text{avg time} \\\hline 
  2   &   6   &     46     & 0.03  & 0.31  & 0.11  \\
  3   &   6   &     74    & 0.03  & 0.53  & 0.15  \\
  2 & 8 & 1822 & 0.01 & 1.0 & 0.37 \\
  2 & 16 & 890110 & 0.04 & 25 & 12 \\
  2 & 18 & 2990 & 0.09 & 19 & 4.4 \\
  3 & 18 & 130646 & 0.0 5& 98 & 19 \\
  2 & 20 & 314542 & 0.03  & 40  & 11  \\
  5   &   20  &     7586   & 0.11  & 360   & 69  \\
  7   &   21  &   3783 & 0.14  & 190  & 14 
\end{tabular}
\end{center}
The computations were run single-threaded in Magma on a Linux computer with 
AMD 2.0 GHz processors and 2 terabytes of RAM.  Note that, in the reduction 
steps, our current implementation does not use the method analyzed in Lemma 
\ref{prop comp char poly} but uses Magma's \texttt{CharacteristicPolynomial},
which becomes the most expensive part of the computation.

For the task of looking up a field defined by an arbitrary polynomial against a list of $N$ distinguished polynomials, one can either use \texttt{polredpadic} or an implementation of Panayi's root finding algorithm \cite{panayi, pauli-roblot}.  The latter is generally faster when $N$ is small.  But when $N$ is large (say, more than $100{,}000$), \texttt{polredpadic} will be much faster since Panayi's algorithm is a pairwise isomorphism test.

\arxivonly{
\section{Examples} \label{sec examples}

\begin{example}\label{ex false friends}
Let \(\xi\) be a root of \(\nu=x^2 + 2x + 2\in\OK[x]\).
In the table below we list
distinguished relative defining polynomials
\(\psi_i\in\Z_3(\xi)[y]\) and absolute defining polynomials \(\varphi_i\in\Z_3[x]\)
with \(\Q_3(\xi)[y]/(\psi_i)\cong \Q_3[x]/(\varphi_i)\)  along with their ramification polygons and residual polynomials \(\overline{A}\).  The polynomials \(\varphi_1\) and \(\psi_2\) and \(\varphi_2\) and \(\psi_3\)
are `false friends'.  The distinguished polynomial \(\psi_1\) is the reduction of the false friend of \(\varphi_3\).
\begin{center}
\small
\begin{tabular}{l|r|c}
\multicolumn{1}{c|}{Defining Polynomials} &
\multicolumn{1}{c|}{Ramification Polygon} & \(\overline{A}\) \\\hline
\(\psi_1=x^9 + (6\xi + 6)y^7 + 3\xi y^6 + 3\) &
\((-9,0), (-1,7)\) & \((z+\overline\xi^2)\) \\
\(\varphi_1=\nu^9 + (3x + 3)\nu^7 + 3x\nu^6 + 3\) &
\((-18,9),(-9,0),(-1,7)\) &                 \\\hline
\(\psi_2=y^9 + (3\xi + 3)y^7 + 3\xi y^6 + 3\) &
\((-9,0), (-1,7)\) & \((z+\overline\xi^6)\) \\
\(\varphi_2=\nu^9 + (6x + 3)\nu^6 + 3\) &
\((-18,9),(-9,0),(-1,7)\) &                 \\\hline
\(\psi_3=y^9 + (6\xi + 3)y^6 + 3\) &
\((-9,0),(-3,6),(-1,15)\) & \((z^6 + \overline\xi^3\!, \) \\
\(\varphi_3=\nu^9 + (9x + 18)\nu^8 + (24x + 15)\nu^7\) &
\multirow{2}{*}{\((-18,9),(-9,0),(-3,6),(-1,15)\)} & \(\overline\xi^3\!z + \overline\xi^7)\) \\ 
\(\phantom{\varphi_3=} + 3x\nu^6 + (27x + 27)\nu + 3\) & &
\end{tabular}
\end{center}
\end{example}

In the next example, we apply results from the construction of
defining polynomials of all totally ramified extensions of 
\((\pi)\)-adic fields.
The valuations of the coefficients of an Eisenstein polynomial \(\varphi\) 
with a given ramification polygon are given in
\cite[Proposition 3.10]{pauli-sinclair} for example, and \cite[Lemma 4.9]{pauli-sinclair} relates the
coefficients of the residual polynomials of \(\varphi\) to the coefficients of \(\varphi\).



\begin{example}[\href{https://www.lmfdb.org/padicField/family/3.1.18.30b}{LMFDB 3.1.18.30b}]
The extensions of \(\Q_3\) with ramification polygon 
\[
\rampol =\{(-18,0),(-9,0),(-3,3),(-1,13)\}
\]
whose segments have slopes \(0\), \(\frac{1}{2}\) and 5.
Let \(\varphi\) be a defining Eisenstein polynomial of one of these extensions. 
The shape of the ramification polygon yields (compare \cite[Lemma 2.2 and Lemma 3.5]{pauli-sinclair}):
\begin{align}
&v_3(\varphi_{i})\ge 2 \text{ for } i\in\{1,2,4,5,7,8,10,11\} \text{ and }\nonumber\\
&v_3(\varphi_{i})=1 \text{ for }i\in\{0,3,13\} \text{ and } v_3(\varphi_{i})\ge 1  \text{ for }i\in \{6,9,12,14,15,16,17\}
\end{align}
Because \(\overline{2}^{18}=\overline{1}\) and \(\overline{1}^{18}=\overline{1}\) 
the polynomials with \(\varphi_{0,1}=1\) and \(\varphi_{0,1}=2\) define non-isomorphic extensions.

There are two possible residual invariants
\[
\Ainv_\delta=\{(z^9 + 2, 2z^3 + 1,  z^2 + \delta ),(z^9 + 2, 2z^3 + 2, 2z^2 + 2\delta)\} \text{ where } \delta\in\{1,2\}
\]
with distinguished representatives \((z^9 + 2, 2z^3 + 1,  z^2 + \delta )\).
The residual polynomial \(z^2+\delta\) corresponds to the additive residual polynomial 
\(\Aadd_{5,\delta}(z) = z^3 + \delta z\).  With Proposition \ref{prop red},
writing 
$i=e\Phi(m)\bmod 18$ and
$k=(e\Phi(m) \bdiv 18)+1$, we get:
\begin{center}
\begin{tabular}{c||c|c|c||c|c||c|c}
slope $m$ & $e\Phi(m)$ & $i$ & $k$ & $\Aadd_{m,1}$ & \(\varphi_{i,k}\) & $\Aadd_{m,2}$ & \(\varphi_{i,k}\)\\\hline
1   &    24         & 6 & 1   & $z^3$           & 0                 & $z^3$ & 0\\        
2   &    27         & 9 & 1   & $z^3$                & 0                 & $z^3$ & 0\\
3   &    30         & 12 & 1  & $z^3$                & 0                 & $z^3$ & 0\\
4   &    33         & 15 & 1  & $z^3$                & 0                 & $z^3$ & 0\\ 
5   &    36         & 0  & 2  & $z^3+z$             &$0$          & $z^3+2z$ & $0,1,2$\\
$6\le m\le 22$& $m+31$ & $1,2,\dots, 17 $ & 2  & $z$  & 0                 & {$2z$} &0\\
$23\le m$ & $m+31$ & $0,1,\dots,17$ & $3,4,\dots$ & $z$ & 0              &  {$2z$} & 0
\end{tabular}
\end{center}
By \cite[Lemma 4.9]{pauli-sinclair} we have
\[
\varphi_{13,1}=\delta \overline{\textstyle\binom{13}{1}^{-1}(-\varphi_{0,1})^{0+1}3^0}=-\delta\varphi_{0,1} \text{ and }
\varphi_{3,1}=1\overline{\textstyle\binom{3}{3}^{-1}(-\varphi_{0,1})^{0+1}3^0}=-\varphi_{0,1}.
\]
Thus each of our extensions is generated by one polynomial of the form
\[
\varphi(x)=x^{18}+3\varphi_{17,1}x^{17}+3\varphi_{16,1}x^{16}+3\varphi_{14,1}x^{14}
-3\delta\varphi_{0,1}x^{13}-3\varphi_{0,1}x^3+3\varphi_{0,1}+9\varphi_{0,2}.
\]
When \(\delta=1\) we have \(\varphi_{0,2}=0\), 
\(\varphi_{17,1}\in\{0,1,2\}\), \(\varphi_{16}\in\{0,1,2\}\), \(\varphi_{14}\in\{0,1,2\}\), and \(\varphi_{0,1}\in\{1,2\}\) which yields $3^3\cdot 2=54$ polynomials.  Because none of these can be transformed into any of the others by changing the uniformizer, none of the corresponding extensions are isomorphic.  

When \(\delta=2\)
we have \(\varphi_{0,2}=\{0,1,2\}\), 
\(\varphi_{17,1}\in\{0,1,2\}\), \(\varphi_{16}\in\{0,1,2\}\), \(\varphi_{14}\in\{0,1,2\}\), 
and \(\varphi_{0,1}\in\{1,2\}\) which yields $3^4\cdot 2=162$ polynomials, no two of which define
isomorphic extensions.

\end{example}

For the extensions \(\KL/\Q_3\) with
\(f_{\KL/\Q_3}=2\), \(e_{\KL/\Q_3}=9\), and \(v_3(\disc(\KL/\Q_3))=15\)
there are two ramification polygons, see Figure \ref{fig ram pol examples}.
We consider one of them in the following.

\begin{example}[\href{https://www.lmfdb.org/padicField/family/3.2.9.30b}{LMFDB 3.2.9.30b}]\label{ex f=2 e=9 rel}
We construct all extensions \(\KL/\Q_3\) of degree 18
with \(v_3(\disc(\KL/\Q_3))=2(9+7-1)\) and ramification polygon 
\[
\{(-18,9),(-9,0),(-3,3),(-1,7)\}
\]
with slopes \(-1,\frac{1}{2},2\).
Let \(\Lur=\Q_3(\xi)\) where \(\xi\) is a root of the Conway polynomial 
\(\nu_2(x)=x^2+ 2x + 2\in\Z_3[x]\) be the unramified part of these extensions.
We define each extension by an Eisenstein polynomial \(\psi\in\OLur[x]\) which has 
ramification polygon 
\(\rampol=\{(-9,0),(-3,3),(-1,7)\}\).
By \cite[Proposition 3.10]{pauli-sinclair} this is equivalent to
\[
v_3(\psi_{i})=1 \text{ for } i\in\{3,7\},\,
v_3(\psi_{i})\ge 1 \text{ for } i \in \{6,8\},\,
v_3(\psi_{i})\ge 2 \text{ for } i \in \{1,2,4,5\}.
\]
Because \(\deg \psi=9\), to satisfy Condition \ref{con dist const} (1) we choose \(\psi_{0,1}=1\).  
With this assumption the polynomials \(\psi\) with distinct residual polynomials 
\[
(\overline{A}_{\frac{1}{2}},\overline{A}_2)=(z^3+\overline\gamma,\overline\gamma z^2+\overline\delta) \text{ where }\overline\gamma,\delta\in\F_9=\F_3(\overline\xi)
\] 
yield non-isomorphic extensions.
Using Proposition \ref{prop red} we get: 
\begin{center}
\begin{tabular}{c||c|c|c||c|c}
slope $m$ & $h=9\Phi_\rampol(m)$         & $i=h\bmod 9$   & $k=(h \bdiv 9)+1$ & $\Aadd_{m}$ & \(\varphi_{i,k}\) \\\hline
1 & 15 & 6 & 1 & $z^3$                        & 0 \\
2 & 18 & 0 & 2 & \(z\overline{A}_2\)
& \\
$3\le m \le 10 $ & $16+m$ & \(1,2,\dots,8\) & \(2\) & $z$ & 0 \\
$11\le m$ & $16+m$ & \(0,1,\dots,8\) & \(3,4,5,\dots\) & $z$ & 0 \\[1ex]
\end{tabular}
\end{center}
With the above we obtain the following generic polynomial (c.f. \cite[\S 3.3]{families})
\[
\psi(x)=x^9+3\psi_{8,1}x^8+3\psi_{7,1}x^7+3\psi_{3,1}x^3+3+9\psi_{0,2}.
\]
Adding \(\overline{A}_2\) to our considerations reduces our choice of coefficients further. 
By \cite[Lemma 4.9]{pauli-sinclair}:
\[
\overline\psi_{3,1}=\overline{\gamma7^{-1}(-\psi_{0,1})}=-\overline\gamma
\text{ and }\overline\psi_{7,1}=\overline{\delta7^{-1}(-\psi_{0,1})}=-\overline\delta\,\overline\xi^s.
\]
Thus
\(
\psi(x)=x^9 + 3\psi_{8,1} x^8- 3\delta x^7 - 3\gamma x^3 + 3 +9\psi_{0,2}
\)
where \(\psi_{8,1}, \psi_{0,2}\in\RepLur\).
Because \(\psi_{0}\sim 3\) we have \(\eta=3/\pi_L^9\sim -1\).
We have nonzero solutions for \(\Aadd_2(z)=\overline\gamma z^3+\overline\delta z=0\) 
when 
\(
\gamma/\delta\in\{1,2,1+\overline{\xi},2+2\overline{\xi}\}\) 
and get
\(\ker(z^3+z)=\{0,1+\overline{\xi},2+2\overline{\xi}\}\), 
\(\ker(z^3+(1+\overline{\xi})z)=\{0,1+2\overline{\xi},2+\overline{\xi}\}\), 
\(\ker(z^3+2z)=\{0,1,2\}\), and
\(\ker(z^3+(2+2\overline{\xi})z)=\{0,\overline{\xi},2\overline{\xi}\}\).
With our choice of representatives of the cokernels from Definition 
we obtain
\[
\overline{\psi}_{0,2}\in\left\{
\begin{array}{ll}
\{0,1,2\}&\text{if }\overline{\gamma/\delta}\in\{1,1+\overline{\xi},2+2\overline{\xi}\}\\  
\{0,\overline\xi,2\overline\xi\}&\text{if }\overline{\gamma/\delta}=2\\
\{0\}&\text{otherwise }
\end{array}
\right.
\]
Thus all extensions of degree 9 
over \(\Lur\)
with ramification polygon \(\rampol\)
are defined by exactly one of the 
\(9\cdot 8(3\cdot 3+1\cdot 3+4)=1152\) polynomials constructed above.
For each \(\delta\) there are \(9\) or \(9\cdot 3=27\) polynomials with residual invariant \([z^3+1,z^1+\overline\delta]\).

\end{example}


We now find the  distinguished defining {\O}ystein polynomial of one of the fields considered above over \(\Q_3\).

\begin{example}
\label{ex f=2 e=9 abs}
We find the distinguished defining polynomial of \(\KL/\Q_3\)
where \(\Lur=\Q_3(\xi)\) is the unramified extension of \(\Q_3\)
of degree 2 given by \(\nu(x) = x^2 + 2x + 2 \) with \(\nu(\xi)=0\)
and \(\KL=\Lur(\piL)\), where \(\piL\) is a root of
\[\psi(x) = x^9 + 3x^8 + 6x^7 + (6+6\xi)x^3 + 3.\]
The ramification polygon  
\[
\rampol=\{(-18,9),(-9,0),(-3,3),(-1,7)\}
\]
of \(\KL/\Q_3\) has slopes \(-1,\frac{1}{2},2\).
As in Example \ref{ex f=2 e=9 rel} we have
\[9\Phi_\rampol(1)=15,\,9\Phi_\rampol(2)=18,\text{ and }9\Phi_\rampol(m)=16+m \text{ for }m\ge3.\]
Thus, by Corollary \ref{coro easy} we can find a defining {\O}ystein polynomial 
\[
\varphi(x) = \nu(x)^9+\sum_{i=0}^8 \nu(x)^i\sum_{k=1}^\infty3^k\varphi_{i.k}^*(x)\in\Z_3[x]
\]
where \(\deg\varphi_{i,k}^*<2\), the coefficients of \(\varphi_{i,k}^*\) are taken from \(\{0,1,2\}\),
and 
\begin{equation}\label{eq ex red easy}
\varphi^*_{i,2}=0 \text{ for }1\le i\le 8\text{ and }
\varphi^*_{i,k}=0 \text{ for }3\le k,\; 0\le i\le 8.
\end{equation}
Let \(\piL\in\KL\) be a root of \(\psi\) and let \(\alpha\) be a root of \(\nu(x)-\piL\).  Then the characteristic polynomial of \(\alpha\) is {\O}ystein.   
Applying (\ref{eq ex red easy}) we get the defining polynomial
\[
\varphi_0(x)=\nu(x)^9 + 3\nu(x)^8 + (6x + 3)\nu(x)^7 + (3x + 3)\nu(x)^6 + (3x + 3)\nu(x)^4 + (6x + 6)\nu(x)^3 + 3
\]
Since \(\rampol\) has no segment of slope \(0\) and we already have \(\overline\varphi^*_{0,1}(\alpha)=\overline\psi_{0,1}=1\)
we do not need to go through the zeroth (\(m=0\)) reduction step.
For slope \(m=1\) we have \(e\Phi(m)=9\) and
\[
\Aadd_{1}(z)=\overline{\left(\frac{\rho(\nu(\alpha) z)}{\nu(\alpha)^{15}}\right)}=2z^3
\text{ where }\rho(x)=\varphi(\alpha+\alpha x)
\]
Since \(9\Phi_\rampol(1)=15\) and \(15\bmod 9=6\) and \(15\bdiv 9=1\) the change from minimal polynomial
of \(\alpha\) to minimal polynomial of \(\alpha+\theta\nu(\alpha)^2\) affects
the coefficient \(\varphi^*_{6,1}\).
Since \(\Aadd_{1}(z)=2z^3\) is surjective we can set \(\varphi^*_{6,1}\) to \(0\)
by finding \(\overline\theta\in\RLur\) with
\[
\Aadd_1(z)-\overline{\varphi^*_{6,1}(\xi)}
=2z^3-(\overline{\xi}+1)=0
\]
The only solution is \(z=\overline{\xi^6}\).  Thus we replace \(\varphi\) by the
minimal polynomial of \(\alpha+\xi^6\nu(\alpha)^2\) and after having applied the reductions for
\(m\ge 3\) given in (\ref{eq ex red easy}) obtain
\[
\varphi_1(x)=\nu(x)^9 + 6\nu(x)^7 + (3x + 3)\nu(x)^4 + (6x + 6)\nu(x)^3 + 18x + 12.
\]
For slope \(m=2\) we have \(9\Phi(m)=16\).
Because \(18\bmod 9=0\) and \(18\bdiv 9=2\) the change from minimal polynomial
of \(\alpha\) to minimal polynomial of \(\alpha+\theta\nu(\alpha)^2\) affects
the coefficient \(\varphi^*_{0,2}\) of \(\varphi\).
We have
\[
\Aadd_{2}(z)=\overline{\left(\frac{\rho(\nu(\alpha)^2 z)}{\nu(\alpha)^{16}}\right)}=2z^3 + (2\overline\xi+2)z
\text{ where }\rho(x)=\varphi(\alpha+\alpha x).
\]
The matrix representation of the \(\F_3\) linear map
\(\Aadd_2:\RLur \to \RLur\) is
\(\big(\begin{smallmatrix}
  1 & 2\\
  1 & 2
\end{smallmatrix}\big)\)
and its row echelon form is
\(\big(\begin{smallmatrix}
  1 & 2\\
  0 & 0
\end{smallmatrix}\big)\).
Reducing \(\overline\varphi^*_{0,2}=2x+1\) with the rows of the latter matrix over \(\F_3\) we get \((2x+1)-2(x+2)=0\).  Thus we can reduce \(\varphi^*_{0,2}\) to \(0\).
The solutions of \(\Aadd_2(z)=2\overline\xi+1\) are \(1\),\(\overline\xi\), and \(2\overline\xi+2\).
Because the characteristic polynomials of
\(\alpha+\nu(\alpha)^2\), \(\alpha+\xi\nu(\alpha)^2\), and \(\alpha+(2\xi+2)\nu(\alpha)^2\)
only differ at coefficients that can be set to 0 by (\ref{eq ex red easy}) the distinguished defining {\O}ystein polynomial of \(\KL/\Q_3\) is
\[
\varphi_2(x)=\nu(x)^9 + 6\nu(x)^7 + (3x + 3)\nu(x)^4 + (6x + 6)\nu(x)^3 + 3.
\]
\end{example}
}





\bibliography{local}
\bibliographystyle{amsalpha}

\end{document}